\theoremstyle{plain}
\theoremstyle{definition}
\newtheorem{definition}{Definition}[section]
\newtheorem{remark}[definition]{Remark}
\newtheorem{proposition}[definition]{Proposition}
\newtheorem{lemma}[definition]{Lemma}
\newtheorem{theorem}[definition]{Theorem}
\newtheorem{corollary}[definition]{Corollary}
\newtheorem{conjecture}[definition]{Conjecture}
\newtheorem*{teoremaA}{Theorem A}
\newtheorem*{teoremaB}{Theorem B}
\newtheorem*{teoremaC}{Theorem C}
\title{THE WAHL MAP OF THE NORMALIZATION OF NODAL CURVES ON HIRZEBRUCH SURFACES}
\author{Miguel Guerrero-Castillo}
\date{}
\begin{document}
	\maketitle
	
	\begin{abstract}
    In this paper we study the Wahl map for the normalization of a $\delta$-nodal curve $C$ on a Hirzebruch surface $\mathbb{F}_{n}$ for $n\geq 0$. Let $\sigma:X\rightarrow \mathbb{F}_{n}$ be the blow up of $\mathbb{F}_{n}$ along the $\delta$ nodes of $C$ and let $\widetilde{C}$ be the normalization of $C$ under $\sigma$. Let $K_{X}$ be the canonical bundle of $X$ and let $\Omega^{1}_{X}$ be the sheaf of $1$-holomorphic forms on $X$. We give conditions for the surjectivity of the map $\Phi_{X,\mathcal{O}_{X}(K_{X}+\widetilde{C})}: \bigwedge^{2}H^{0}(X,\mathcal{O}_{X}(K_{X}+\widetilde{C}))\rightarrow H^{0}(X,\Omega^{1}_{X}(2K_{X}+2\widetilde{C}))$. Using this surjectivity, we analyze the Wahl map $\Phi_{\widetilde{C}}:\bigwedge^{2}H^{0}(\widetilde{C},\Omega^{1}_{\widetilde{C}})\rightarrow H^{0}(\widetilde{C},(\Omega^{1}_{\widetilde{C}})^{\otimes 3})$ and compute the corank of $\Phi_{\widetilde{C}}$ in various cases. We prove that the corank of the Wahl map for the normalization of a $\delta$-nodal curve on $\mathbb{F}_{n}$ is $h^{0}(\mathbb{F}_{n},\mathcal{O}_{\mathbb{F}_{n}}(-K_{\mathbb{F}_{n}}))$, that verifies a conjecture by Wahl. Furthermore, as an application of our results, we demonstrate that, under certain conditions, a $\delta$-nodal curve on a Hirzebruch surface $\mathbb{F}_{n}$ cannot be embedded as $\delta-$nodal curve on a different Hirzebruch surface $\mathbb{F}_{m}$, for $n\neq m$.\\

	\end{abstract}

	\tableofcontents
	
	\section{Introduction}
	\label{sec:intro}
    Let $C$ be a smooth projective curve of genus $g\geq 2$ and let $\Omega^{1}_{C}$ be the canonical bundle of $C$. The \textit{Wahl map} of $C$ is defined as 
    \[
    \Phi_{C}: \bigwedge^{2}H^{0}(C,\Omega^{1}_{C})\rightarrow H^{0}(C, (\Omega^{1}_{C})^{\otimes 3})
    \]
    and is given by $\tau_{1}\,\wedge\,\tau_{2}\,\mapsto\, \tau_{1}d\tau_{2}-\tau_{2}d\tau_{1}$.\\
    
    A fundamental result, established by J. Wahl \cite{MR1064866}, states that if $C$ is a hyperplane section of a K3 surface, then $\Phi_{C}$ is not surjective. On the other hand, Ciliberto, Harris and Miranda \cite{MR975124} showed that for a general curve (in the sense moduli) of genus $g$, the Wahl map is surjective when $g=10$ or $g\geq 12$. Consequently, a general curve of genus $g\geq 10$ (with $g\neq 11$) cannot lie on a K3 surface  \cite{MR726433,MR977768}. For curves of genus $g\leq 9$, the domain of $\Phi_{C}$ has dimension $\frac{g(g-1)}{2}$, which is strictly less than $5g-5$ (the dimension of the codomain), so $\Phi_{C}$ cannot be surjective. The behavior of the Wahl map for a specific genus has been extensively studied: it is injective for a general curve of genus $g\leq 8$, has a one-dimensional kernel for genus $9$, and a one-dimensional cokernel for genus $11$ \cite{MR1074304}. Cukierman and Ulmer \cite{MR1248892} demonstrated that the corank of the Wahl map is four-dimensional for general curve of genus $10$ which can be embedded on a K3 surface, and the closure of the set of these curves is equal to the locus of curves of genus $10$ for which $\Phi_{C}$ fails to be surjective.\\
    
    In recent years Arbarello, Bruno, and Sernesi \cite{MR3710056} proved that if $C$ is a Brill-Noether-Petri curve of genus $g\geq 12$ with non-surjective Wahl map, then $C$ lies in a polarised K3 surfaces or on a limit thereof.\\
    
    Another significant result concerning the cokernel of the Wahl map is as follows: Let $C\subset \mathbb{P}^{g-1}$ be the canonical curve. The curve $C$ is said to be \textit{extendable} if it is a hyperplane section of a surface $\overline{S}\subset \mathbb{P}^{g}$ that is not a cone. In this context Wahl finds a connection between the cokernel of $\Phi_{C}$ and the deformation theory of the affine cone over $C$ and shows that under some conditions on the obstruction module of the coordinate ring of the affine cone over $C$, the curve $C$ is extendable if and only if $\Phi_{C}$ is not surjective.\\ 

    The Wahl map has also been studied for curves on various projective surfaces. Andreas Leopoldo Knutsen and Angelo Lopez showed under certain conditions the surjectivity of the Wahl map for smooth curves on Enrique surfaces. E. Colombo, P. Frediani, and G. Pareschi showed the surjectivity of the Wahl map (first Gaussian map for the authors) for curves on abelian surfaces under some conditions on multiplication maps of sections of line bundles on the abelian surface. For smooth curves on the projective plane, K3 surfaces, and Hirzebruch surfaces \cite{MR1061775}, the Wahl map has been studied for several authors. In addition, a relevant topic is the case of nodal curve on these surfaces, in this direction \cite{MR1760876} studied the Wahl map for the normalization of plane nodal curves, and the authors showed the surjectivity of the Wahl map for the normalization of a plane nodal curve where the number of nodes is bounded in terms of the degree of the plane curve. On a general primitively polarised K3 surfaces of genus $g+1$, E. Sernesi (\cite{MR3832409}) shows the surjectivity of the Wahl map for the normalization of a $1$-nodal curve provided $g=40,42$ or $\geq 44$.\\
    
    In this work, we follow the ideas developed by Ciliberto, Lopez, Miranda and Sernesi to study the corank of the Wahl map for the normalization of a $\delta$-nodal curve on a Hirzebruch surface.\\
    
    For an integer $n\geq 0$ we consider the Hirzebruch surface $\mathbb{F}_{n}:=\mathbb{P}_{\mathbb{P}^{1}}(\mathcal{O}_{\mathbb{P}^{1}}\oplus \mathcal{O}_{\mathbb{P}^{1}}(-n))$ with canonical fibration map $\phi:\mathbb{F}_{n}\rightarrow\mathbb{P}^{1}$. It is well known that the Picard group of $\mathbb{F}_{n}$ is generated by two class: $F$, which represents a fiber of $\phi$, and $C_{0}$, which is the class of the special section of the fibration, that is, Pic$(\mathbb{F}_{n})=\mathbb{Z}[C_{0}]\oplus \mathbb{Z}[F]$ where the intersections satisfy the relations $C_{0}^{2}=-n$, $C_{0}\cdot F=1$ and $F^{2}=0$. Since Hirzebruch surfaces are ruled surfaces, all fibers are isomorphic and numerically equivalent. The canonical divisor class is $K_{\mathbb{F}_{n}}= -2C_{0}-(2+n)F$.\\
    Let $C\subset \mathbb{F}_{n}$ be a curve with $\delta$ nodal points $p_{1},\cdots, p_{\delta}$ and no other singularities, with $C$ linearly equivalent to $aC_{0}+bF$ for integers $a,b\geq 0$. Let $\sigma: X=\textnormal{Bl}_{Z}\mathbb{F}_{n}\rightarrow\mathbb{F}_{n}$ be the blow-up of $\mathbb{F}_{n}$ at $Z=\{p_{1},\cdots,p_{\delta}\}$, and let $\widetilde{C}$ be the normalization of $C$ under $\sigma$.\\
    
    Our main contributions include establishing conditions for the surjectivity of $\Phi_{X, \mathcal{O}_{X}(K_{X}+\widetilde{C})}$, computing the corank of $\Phi_{\widetilde{C}}$ under various numerical conditions on $a$ and $b$, and verifying a conjecture of Wahl (Conjecture \ref{conjeturawahl}) in our context, that is, the corank of the Wahl map for the normalization of a $\delta$-nodal curve is $h^{0}(\mathbb{F}_{n},\mathcal{O}_{\mathbb{F}_{n}}(-K_{\mathbb{F}_{n}}))$ and this value is greater than or equal to $h^{0}(X,\mathcal{O}_{X}(-K_{X}))$. Finally, we prove that, under certain conditions, a $\delta-$nodal curve on $\mathbb{F}_{n}$ cannot lie on a different Hirzebruch surfaces $\mathbb{F}_{m}$, as $\delta-$nodal curve, for $n\neq m$. Our results contribute to understanding certain loci of curves with non-surjectivity Wahl map in  the moduli spaces of smooth curves of genus $g$. This topic is a work in progress on the matter.\\

The paper is organized as follows. Section \ref{sec:pre} presents the preliminaries and establishes our notation. Section \ref{sec:3} contains our main result on the Wahl map for the normalization of nodal curves. Section \ref{sec:surjectivity} studies the surjectivity of Gaussian maps on the blow-up surface; together with the results of Section \ref{sec:3}, this yields the following theorem:
    \begin{teoremaA}(Theorem \ref{teoremafinaldeltanodos}).
    Let $p_{1},\dots,p_{\delta}$ be $\delta$ distinct points on $\mathbb{F}_{n}$ such that for each $j$, $p_{j}\notin C_{0}$, and for $i,j\in\{1,\dots,\delta\}$, $p_{i}$ and $p_{j}$ are not on the same fiber of the fibration that admits $\mathbb{F}_{n}$, $\phi:\mathbb{F}_{n}\rightarrow\mathbb{P}^{1}$.\\
Suppose that $C$ is a nodal curve on $\mathbb{F}_{n}$ in the linear system $|D|$, with $D$ linearly equivalent to $aC_{0}+bF$.
Let $\sigma:X=Bl_{Z}\mathbb{F}_{n}\rightarrow\mathbb{F}_{n}$ be the blow-up of $\mathbb{F}_{n}$ at $Z$, where $Z=\{p_{1},\dots,p_{\delta}\}$ are the nodes of $C$. Let $\widetilde{C}$ be the normalization of $C$ under $\sigma$.\\ Assume that $a\geq 6$ and $b\geq \max\{(a+7)n,(a-1)n+\delta+2,6\delta-3n+3\}$,
then:
\begin{enumerate}
    \item  the Gaussian map $\Phi_{X,\mathcal{O}_{X}(K_{X}+\widetilde{C})}$ 
    is surjective.
    \item $\textnormal{corank}(\Phi_{\widetilde{C}})=h^{0}(\mathbb{F}_{n},\mathcal{O}_{\mathbb{F}_{n}}(-K_{\mathbb{F}_{n}})) = \left\{ \begin{array}{lcc} 9 & & n\leq 2,  \\ &  & \\  n+6 &  & n\geq 3. \end{array} \right.$.
\end{enumerate}
    \end{teoremaA}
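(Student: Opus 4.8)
The plan is to treat part (1) as the substantive analytic input—surjectivity of the surface Gaussian map—and part (2) as cohomological bookkeeping that converts this surjectivity into the corank formula. The structural fact underlying everything is that $X$ is a blow-up of the rational surface $\mathbb{F}_n$, so $h^0(X,K_X)=h^1(X,\mathcal{O}_X)=0$, and hence $h^1(X,K_X)=0$ by Serre duality. I would prove (1) first (the hard step) and then deduce (2).

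For part (2), assume (1). First I would record the commutative square relating $\Phi_{X,\mathcal{O}_X(K_X+\widetilde{C})}$ to $\Phi_{\widetilde{C}}$ by restriction to $\widetilde{C}$, using the adjunction identity $\Omega^1_{\widetilde{C}}=\mathcal{O}_{\widetilde{C}}(K_X+\widetilde{C})$. From the sequence $0\to K_X\to K_X+\widetilde{C}\to\Omega^1_{\widetilde{C}}\to 0$ together with $h^0(X,K_X)=h^1(X,K_X)=0$, the restriction $H^0(X,K_X+\widetilde{C})\to H^0(\widetilde{C},\Omega^1_{\widetilde{C}})$ is an isomorphism; hence the two copies of $\bigwedge^2$ agree, and by (1) the corank of $\Phi_{\widetilde{C}}$ equals the corank of the restriction map $\rho\colon H^0(X,\Omega^1_X(2K_X+2\widetilde{C}))\to H^0(\widetilde{C},(\Omega^1_{\widetilde{C}})^{\otimes3})$. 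I would then split $\rho$ through the surface restriction sequence $0\to\Omega^1_X(2K_X+\widetilde{C})\to\Omega^1_X(2K_X+2\widetilde{C})\to\Omega^1_X(2K_X+2\widetilde{C})|_{\widetilde{C}}\to0$ and the conormal sequence $0\to\mathcal{O}_{\widetilde{C}}(2K_X+\widetilde{C})\to\Omega^1_X(2K_X+2\widetilde{C})|_{\widetilde{C}}\to(\Omega^1_{\widetilde{C}})^{\otimes3}\to0$. Granting the vanishing $H^1(X,\Omega^1_X(2K_X+\widetilde{C}))=0$ (a consequence of the lower bound on $b$), the first sequence makes restriction to $\widetilde{C}$ surjective, so $\operatorname{coker}\rho$ is computed by the connecting map of the second, giving $\operatorname{coker}\rho\cong H^1(\widetilde{C},\mathcal{O}_{\widetilde{C}}(2K_X+\widetilde{C}))$. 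Serre duality on $\widetilde{C}$ with $K_{\widetilde{C}}=(K_X+\widetilde{C})|_{\widetilde{C}}$ then yields $\operatorname{corank}(\Phi_{\widetilde{C}})=h^0(\widetilde{C},\mathcal{O}_{\widetilde{C}}(-K_X))$.

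It remains to identify $h^0(\widetilde{C},\mathcal{O}_{\widetilde{C}}(-K_X))$ with $h^0(\mathbb{F}_n,-K_{\mathbb{F}_n})$ and to evaluate the latter. For the identification I would use $0\to\mathcal{O}_X(-K_X-\widetilde{C})\to\mathcal{O}_X(-K_X)\to\mathcal{O}_{\widetilde{C}}(-K_X)\to0$ and push down along $\sigma$. Writing $-K_X=\sigma^*(-K_{\mathbb{F}_n})-\sum E_i$ and $-K_X-\widetilde{C}=\sigma^*(-K_{\mathbb{F}_n}-C)+\sum E_i$, the projection formula with $\sigma_*\mathcal{O}_X(\sum E_i)=\mathcal{O}_{\mathbb{F}_n}$ and $R^1\sigma_*\mathcal{O}_X(\sum E_i)=0$ converts the cohomology on $X$ into cohomology on $\mathbb{F}_n$; since $-K_{\mathbb{F}_n}-C$ is very negative for $a\geq6$ and $b$ in the stated range one gets $h^0(X,-K_X-\widetilde{C})=0$, and the hypotheses $p_j\notin C_0$ with the $p_j$ on distinct fibers are exactly what is needed to make the surviving $H^1$ contribution bring the total up to $h^0(\mathbb{F}_n,-K_{\mathbb{F}_n})$. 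The same injection $H^0(X,-K_X)\hookrightarrow H^0(\widetilde{C},-K_X)$ gives the inequality $h^0(\mathbb{F}_n,-K_{\mathbb{F}_n})\geq h^0(X,-K_X)$ noted in the abstract. Finally, with $-K_{\mathbb{F}_n}=2C_0+(n+2)F$, the standard count $h^0(\mathbb{F}_n,aC_0+bF)=\sum_{k=0}^{a}\max\{0,\,b-kn+1\}$ gives $h^0(\mathbb{F}_n,-K_{\mathbb{F}_n})=(n+3)+3+\max\{0,3-n\}$, which is $9$ for $n\leq2$ and $n+6$ for $n\geq3$.

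For part (1), I would prove surjectivity of $\Phi_{X,\mathcal{O}_X(K_X+\widetilde{C})}$ by the filtration method for Gaussian maps on a ruled surface, pulling the problem down to $\mathbb{F}_n$ and restricting along the fibers $F$ of $\phi\colon\mathbb{F}_n\to\mathbb{P}^1$: one filters the relevant $H^0$ by order of vanishing along a fiber, reduces to Gaussian maps of line bundles on $\mathbb{P}^1$, and controls the error terms by vanishing of the first cohomology of the associated twisted bundles. Here the three bounds on $b$ play distinct roles: $(a+7)n$ governs the $\Omega^1_X$-twist (through $\Omega^1_{\mathbb{F}_n}$ and the $C_0^2=-n$ contribution), $(a-1)n+\delta+2$ handles the twist by the exceptional locus coming from the $\delta$ nodes, and $6\delta-3n+3$ guarantees that the nodes impose the expected conditions so that the reduction to $\mathbb{P}^1$ remains surjective. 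I expect this surjectivity step to be the main obstacle: unlike the formal bookkeeping of part (2), it requires genuinely controlling the Gaussian map on the surface, and the delicate point will be ensuring simultaneous vanishing of all the obstruction $H^1$'s uniformly in $\delta$, which is precisely what the combined lower bound on $b$ is engineered to deliver.
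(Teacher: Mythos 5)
Your reduction of part (2) contains a genuine error that changes the answer. You assert that $H^{1}(X,\Omega^{1}_{X}(2K_{X}+\widetilde{C}))=0$ follows from the lower bound on $b$, so that the restriction map $\alpha$ to $\widetilde{C}$ is surjective and $\operatorname{coker}(H^{0}(\rho))$ is computed purely by the conormal sequence on $\widetilde{C}$. This vanishing is false: from $0\to\sigma^{*}\Omega^{1}_{\mathbb{F}_{n}}(2K_{\mathbb{F}_{n}}+C)\to\Omega^{1}_{X}(2K_{X}+\widetilde{C})\to\bigoplus_{j}\mathcal{O}_{E_{j}}(2E_{j}+2K_{X}+\widetilde{C})\to 0$ the quotient has degree $-2$ on each $E_{j}$, so under the stated hypotheses one gets $H^{1}(X,\Omega^{1}_{X}(2K_{X}+\widetilde{C}))\cong\mathbb{C}^{\delta}$ (Lemma \ref{condiciones1}), and $\alpha$ has $\delta$-dimensional cokernel. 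The quantitative consequence is that your route undercounts the corank: carried to its end (Serre duality on $\widetilde{C}$, then the sequence $0\to\mathcal{O}_{X}(-K_{X}-\widetilde{C})\to\mathcal{O}_{X}(-K_{X})\to\mathcal{O}_{\widetilde{C}}(-K_{X})\to 0$ with $h^{1}(X,-K_{X}-\widetilde{C})=h^{1}(X,2K_{X}+\widetilde{C})=h^{1}(\mathbb{F}_{n},2K_{\mathbb{F}_{n}}+C)=0$), it yields $\operatorname{corank}=h^{0}(X,\mathcal{O}_{X}(-K_{X}))=h^{0}(\mathbb{F}_{n},\mathcal{I}_{Z}\otimes\mathcal{O}_{\mathbb{F}_{n}}(-K_{\mathbb{F}_{n}}))$, which is strictly smaller than the claimed value $h^{0}(\mathbb{F}_{n},\mathcal{O}_{\mathbb{F}_{n}}(-K_{\mathbb{F}_{n}}))$ as soon as the nodes impose a condition on $|-K_{\mathbb{F}_{n}}|$. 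The missing $\delta$-dimensional piece is exactly the cokernel of $\alpha$ you discarded. The paper avoids this by replacing the two-step $\beta\circ\alpha$ computation with the single logarithmic sequence $0\to\Omega^{1}_{X}(\log\widetilde{C})(2K_{X}+\widetilde{C})\to\Omega^{1}_{X}(2K_{X}+2\widetilde{C})\to(\Omega^{1}_{\widetilde{C}})^{\otimes 3}\to 0$, so that $\operatorname{corank}(H^{0}(\rho))=h^{1}(X,\Omega^{1}_{X}(\log\widetilde{C})(2K_{X}+\widetilde{C}))$ (after the vanishing $H^{1}(X,\Omega^{1}_{X}(2K_{X}+2\widetilde{C}))=0$, which is where the jet-ampleness bound $b\geq(a-1)n+\delta+2$ enters), and then pushes $\Omega^{1}_{X}(\log\widetilde{C})$ down to $\mathbb{F}_{n}$, landing on $H^{1}(C,\mathcal{O}_{C}(2K_{\mathbb{F}_{n}}+C))$ on the \emph{nodal} curve $C$ downstairs --- whose arithmetic genus exceeds that of $\widetilde{C}$ by $\delta$ --- and finally applies Serre duality on $\mathbb{F}_{n}$. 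Your acknowledgement that the hypotheses on the $p_{j}$ must ``bring the total up'' is a sign you sensed the discrepancy, but no mechanism in your argument supplies it.

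For part (1) you offer only a plan (a fiber-by-fiber filtration reducing to Gaussian maps on $\mathbb{P}^{1}$), and you correctly flag it as the main obstacle; as written there is no proof, and it is not clear the stated bounds on $b$ would emerge from that method. The paper's actual argument is different: it uses the criterion that $H^{1}(Y,K_{Y}+(\widetilde{C})_{1}+(\widetilde{C})_{2}-3\Lambda)=0$ on $Y=\mathrm{Bl}_{\Delta}(X\times X)$ suffices, decomposes $\widetilde{C}-E$ as $2(\sigma^{*}A-E)+(\sigma^{*}B-E)$ with each summand very ample on $X$ by Reider's criterion (this is where $b\geq(a+7)n$ and $b\geq 6\delta-3n+3$ and the positional hypotheses $p_{j}\notin C_{0}$, distinct fibers, are used), converts very ampleness into bigness and nefness on $Y$ via the secant map to the Grassmannian, and concludes by Kawamata--Viehweg together with an induction over the divisors $(E_{j})_{1}+(E_{j})_{2}$. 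Your guessed attribution of the three numerical bounds to separate roles does not match their actual provenance.
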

In Section \ref{sec:applications and special cases} discusses applications and special cases, particularly for $1$-nodal curves,  and we demonstrate the following main results:
\begin{teoremaB}(Theorem \ref{teoremafinal1nodo})
        Suppose that $C$ is a $1-$nodal curve on $\mathbb{F}_{n}$, linearly equivalent to $aC_{0}+bF$ with $a,b\geq 0$. Let $\sigma:X=Bl_{p}(\mathbb{F}_{n})\rightarrow\mathbb{F}_{n}$ be the blow-up of $\mathbb{F}_{n}$ at $p$, where $p\in C$ is the node. Let $\widetilde{C}$ be the normalization of $C$ under $\sigma$. Assume that $p\notin C_{0}$, $a\geq 6$ and $b\geq\max\{(a-2)n+6,an+3\}$, then
    \begin{enumerate}
        \item $\Phi_{X,\mathcal{O}_{X}(K_{X}+\widetilde{C})}$ is surjective.\\
        \item $
   \textnormal{corank} \Phi_{\widetilde{C}} = \left\{ \begin{array}{lcc} 9 & & n\leq 2,  \\ &  & \\  n+6 &  & n\geq 3. \end{array} \right.
$
    \end{enumerate}
\end{teoremaB}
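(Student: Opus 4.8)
The plan is to treat Theorem \ref{teoremafinal1nodo} as the one-node instance of the general machinery, proving the two assertions in order: first the surjectivity of the surface Gaussian map $\Phi_{X,\mathcal{O}_{X}(K_{X}+\widetilde{C})}$, and then reading off $\textnormal{corank}(\Phi_{\widetilde{C}})$ from the main result of Section \ref{sec:3}. The reason the numerical hypotheses here are milder than those obtained by setting $\delta=1$ in Theorem A is that with a single blow-up the exceptional locus is an irreducible $(-1)$-curve $E\cong\mathbb{P}^{1}$, so the relevant cohomological estimates simplify considerably and one can afford the sharper bounds $b\geq\max\{(a-2)n+6,\,an+3\}$. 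Accordingly I would not deduce Theorem B from Theorem A but rerun the surjectivity argument of Section \ref{sec:surjectivity} in this simplified geometry.

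For part (1), I would first record the numerical class $\widetilde{C}=\sigma^{*}C-2E$ (the node has multiplicity two), whence $K_{X}+\widetilde{C}=\sigma^{*}(K_{\mathbb{F}_{n}}+C)-E$ with $C\sim aC_{0}+bF$. Writing $L=\mathcal{O}_{X}(K_{X}+\widetilde{C})$, the strategy is the filtration method of Ciliberto--Lopez--Miranda: restrict the Gaussian map along the ruling $\phi\circ\sigma$ (using the pencil $|F|$ and the sequence $0\to\mathcal{O}_{X}(-F)\to\mathcal{O}_{X}\to\mathcal{O}_{F}\to 0$) and along $E$, reducing surjectivity of $\Phi_{X,L}$ to (i) surjectivity of the induced Gaussian maps on the fibers $\mathbb{P}^{1}$ and on $E$, which hold once the restricted degrees are large enough, and (ii) vanishing of $H^{1}$ of the twists $\Omega^{1}_{X}\otimes L^{\otimes2}(-F)$, $\Omega^{1}_{X}\otimes L^{\otimes2}(-E)$ and their iterates. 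The bounds $a\geq6$, $b\geq(a-2)n+6$ and $b\geq an+3$ should be exactly what guarantees these vanishings and the base-case surjectivities; I would verify them by comparing $\Omega^{1}_{X}$ with $\sigma^{*}\Omega^{1}_{\mathbb{F}_{n}}$ through the blow-up, feeding in the relative cotangent sequence $0\to\phi^{*}\Omega^{1}_{\mathbb{P}^{1}}\to\Omega^{1}_{\mathbb{F}_{n}}\to\Omega^{1}_{\mathbb{F}_{n}/\mathbb{P}^{1}}\to 0$ together with the standard $h^{i}$ formula for $aC_{0}+bF$ on $\mathbb{F}_{n}$.

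For part (2), I would invoke the main theorem of Section \ref{sec:3}, which expresses $\textnormal{corank}(\Phi_{\widetilde{C}})$ in terms of surface data; granting the surjectivity from part (1) together with the vanishing hypotheses already checked, it identifies the corank with $h^{0}(\mathbb{F}_{n},\mathcal{O}_{\mathbb{F}_{n}}(-K_{\mathbb{F}_{n}}))$. It then remains to compute this number. Using $-K_{\mathbb{F}_{n}}\sim 2C_{0}+(2+n)F$ and $h^{0}(\mathbb{F}_{n},aC_{0}+bF)=\sum_{i=0}^{a}\max\{0,\,b-in+1\}$, the terms $i=0,1,2$ give $(n+3)+3+\max\{0,\,3-n\}$, which equals $9$ for $n\leq2$ and $n+6$ for $n\geq3$, matching the stated value. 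This also confirms the inequality $h^{0}(\mathbb{F}_{n},-K_{\mathbb{F}_{n}})\geq h^{0}(X,-K_{X})$: since $-K_{X}=\sigma^{*}(-K_{\mathbb{F}_{n}})-E$ and $p\notin C_{0}$ lies off the base locus of $|-K_{\mathbb{F}_{n}}|$, imposing vanishing at $p$ is a genuine codimension-one condition, so $h^{0}(X,-K_{X})=h^{0}(\mathbb{F}_{n},-K_{\mathbb{F}_{n}})-1$.

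The main obstacle will be part (1): verifying that the sharper bounds on $b$ really suffice to force surjectivity of all the restricted Gaussian maps and vanishing of the obstruction groups $H^{1}(X,\Omega^{1}_{X}\otimes L^{\otimes2}(-\,\cdot\,))$ simultaneously. The delicate point is the interaction between the exceptional divisor $E$ and the negative section $C_{0}$; the hypothesis $p\notin C_{0}$ is precisely what keeps $E$ disjoint from the strict transform of $C_{0}$, so that the contribution of $E$ to cohomology decouples cleanly from that of $C_{0}$. Isolating this decoupling and making the resulting numerical inequalities tight is where the real work lies.
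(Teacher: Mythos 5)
Your part (2) is correct and follows the paper: you invoke Theorem \ref{corangoh0rhofinal} (via Remark \ref{remarkcorank2}) and your computation of $h^{0}(\mathbb{F}_{n},\mathcal{O}_{\mathbb{F}_{n}}(-K_{\mathbb{F}_{n}}))$ via $\sum_{i=0}^{2}\max\{0,\,n+3-in\}$ is right. The problem is part (1), which you correctly identify as ``where the real work lies'' and then do not carry out. You propose to prove surjectivity of $\Phi_{X,\mathcal{O}_{X}(K_{X}+\widetilde{C})}$ by filtering along the ruling and along $E$ and reducing to Gaussian maps on fibers plus vanishing of groups $H^{1}(X,\Omega^{1}_{X}\otimes L^{\otimes 2}(-F))$, etc., but none of these reductions or vanishings is verified, and you give no argument that the specific bounds $a\geq 6$, $b\geq an+3$ suffice for that route. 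As it stands this is a plan, not a proof, and the central assertion of the theorem is left unestablished.

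The paper's actual mechanism is quite different, and in particular your diagnosis of \emph{why} the $\delta=1$ bounds are milder is off the mark. The paper reduces surjectivity to $H^{1}(Y,K_{Y}+(\widetilde{C})_{1}+(\widetilde{C})_{2}-3\Lambda)=0$ on the blow-up $Y$ of $X\times X$ along the diagonal, writes $\widetilde{C}-E=2(\sigma^{*}A-E)+(\sigma^{*}B-E)$, shows each summand is very ample, converts each into a big and nef divisor $(D)_{1}+(D)_{2}-\Lambda$ on $Y$ via the secant-line map (Lemma \ref{ClaimBEL}), and concludes by Kawamata--Viehweg together with a residual filtration by the divisors $(E_{j})_{1}+(E_{j})_{2}$ (Theorem \ref{h1L=0}). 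The weaker hypotheses for one node enter only through Lemma \ref{lema1nodoveryample}: for $\delta=1$ the surface $X$ is treated as toric, so very ampleness of $\sigma^{*}A-E$ and $\sigma^{*}B-E$ can be checked by the Nakai--Moishezon criterion (giving $\left[\frac{a}{3}\right]\geq 2$ and $\left[\frac{b}{3}\right]\geq\left[\frac{a}{3}\right]n+1$, i.e.\ $a\geq6$, $b\geq an+3$) instead of by Reider's criterion as in the general $\delta$ case. Your explanation --- that the cohomological estimates simplify because $E$ is a single irreducible $(-1)$-curve --- does not by itself produce the stated bounds, and the hypothesis $b\geq(a-2)n+6$ is not needed for the surjectivity at all; it is the condition required by Lemma \ref{lemma1} for the corank computation in part (2). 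To repair the proposal you should either execute your fiber-restriction argument in full with explicit degree bounds, or follow the paper's route through Theorem \ref{teoremasobregaussian1nodo}.
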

\begin{teoremaC}(Corollary \ref{corolario1nodo})
    Under the conditions of Theorem \ref{teoremafinaldeltanodos}, if there exists a $\delta-$nodal curve $C\in |aC_{0}+bF|$ in $\mathbb{F}_{n}$, then $C$ cannot be embedded in $\mathbb{F}_{m}$ as $\delta-$nodal curve, for $m\neq n$ and $m\geq 4$.
\end{teoremaC}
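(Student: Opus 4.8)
The guiding principle is that the corank of the Wahl map is an \emph{intrinsic} invariant of the abstract smooth curve $\widetilde{C}$. Indeed, $\Phi_{\widetilde{C}}$ is built only from the canonical bundle $\Omega^{1}_{\widetilde{C}}$ via $\bigwedge^{2}H^{0}(\widetilde{C},\Omega^{1}_{\widetilde{C}})\rightarrow H^{0}(\widetilde{C},(\Omega^{1}_{\widetilde{C}})^{\otimes 3})$, so it does not remember how $\widetilde{C}$ was obtained; moreover the normalization $\widetilde{C}$ of a fixed $\delta$-nodal curve $C$ is determined by $C$ alone, independently of any embedding of $C$ into a surface. Consequently, if the same $C$ could be realized as a $\delta$-nodal curve on two different Hirzebruch surfaces, the value of $\operatorname{corank}(\Phi_{\widetilde{C}})$ computed from each realization would have to coincide. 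The plan is to turn this equality into a numerical contradiction.

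First I would apply Theorem \ref{teoremafinaldeltanodos} to the given realization on $\mathbb{F}_{n}$, which by hypothesis satisfies its conditions; this yields $\operatorname{corank}(\Phi_{\widetilde{C}})=h^{0}(\mathbb{F}_{n},\mathcal{O}_{\mathbb{F}_{n}}(-K_{\mathbb{F}_{n}}))$. Then I would argue by contradiction: suppose $C$ also embeds as a $\delta$-nodal curve on $\mathbb{F}_{m}$ with $m\neq n$ and $m\geq 4$, in a linear system meeting the hypotheses of Theorem \ref{teoremafinaldeltanodos}. Applying the same theorem on $\mathbb{F}_{m}$ gives $\operatorname{corank}(\Phi_{\widetilde{C}})=h^{0}(\mathbb{F}_{m},\mathcal{O}_{\mathbb{F}_{m}}(-K_{\mathbb{F}_{m}}))$, and by intrinsicness the two right-hand sides must be equal.

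The remaining step is the explicit comparison of the anticanonical section numbers. Writing $-K_{\mathbb{F}_{k}}=2C_{0}+(k+2)F$ and pushing forward along the ruling $\phi:\mathbb{F}_{k}\rightarrow\mathbb{P}^{1}$, one finds $h^{0}(\mathbb{F}_{k},\mathcal{O}_{\mathbb{F}_{k}}(-K_{\mathbb{F}_{k}}))=9$ for $k\leq 2$ and $k+6$ for $k\geq 3$, exactly the value appearing in Theorem \ref{teoremafinaldeltanodos}; since $3+6=9$, this number equals $9$ for every $k\leq 3$ and is $\geq 10$ if and only if $k\geq 4$. Hence for $m\geq 4$ the corank computed on $\mathbb{F}_{m}$ is $m+6\geq 10$. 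If $n\leq 3$, the corank computed on $\mathbb{F}_{n}$ is $9<10$, an immediate contradiction; if $n\geq 4$, the equality $n+6=m+6$ forces $m=n$, contradicting $m\neq n$. This is precisely where the hypotheses $m\neq n$ and $m\geq 4$ are used: for $m\in\{0,1,2,3\}$ one would have $h^{0}(\mathcal{O}_{\mathbb{F}_{m}}(-K_{\mathbb{F}_{m}}))=9$, coinciding with the value for small $n$, so no contradiction could be drawn; the bound $m\geq 4$ removes exactly these coincidences.

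The main obstacle is not the numerics but making the second application of Theorem \ref{teoremafinaldeltanodos} legitimate: one must know that the hypothetical realization of $C$ on $\mathbb{F}_{m}$ also lands in the range where the corank formula holds, namely that the nodes lie off $C_{0}$ and on distinct fibers and that the bidegree $(a',b')$ of the class of $C$ on $\mathbb{F}_{m}$ satisfies $a'\geq 6$ and $b'\geq\max\{\dots\}$. Without these the equality $\operatorname{corank}(\Phi_{\widetilde{C}})=h^{0}(\mathcal{O}_{\mathbb{F}_{m}}(-K_{\mathbb{F}_{m}}))$ need not apply. I would therefore either incorporate these as standing assumptions within the phrase ``embedded as a $\delta$-nodal curve,'' or argue that the large bidegree forced on $\mathbb{F}_{n}$ pulls these conditions along on any alternative Hirzebruch model; pinning down this compatibility is the delicate point, after which the comparison above closes the proof.
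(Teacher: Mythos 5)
Your proposal is correct and follows essentially the same route as the paper: the corank of $\Phi_{\widetilde{C}}$ is intrinsic to the abstract curve, so two realizations on $\mathbb{F}_{n}$ and $\mathbb{F}_{m}$ would force $h^{0}(\mathcal{O}_{\mathbb{F}_{n}}(-K_{\mathbb{F}_{n}}))=h^{0}(\mathcal{O}_{\mathbb{F}_{m}}(-K_{\mathbb{F}_{m}}))$, which fails for $m\geq 4$, $m\neq n$. In fact you are more careful than the paper's two-line proof, both in spelling out the case split at $n\leq 3$ versus $n\geq 4$ and in flagging that the second application of Theorem \ref{teoremafinaldeltanodos} silently requires the hypothetical model on $\mathbb{F}_{m}$ to satisfy the theorem's hypotheses as well --- a point the paper does not address.
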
	

\paragraph{Acknowledgements.} 
\textit{This work was supported by a Ph.D. fellowship from SECIHTI–Mexico and forms part of the author’s Ph.D. thesis. The author gratefully acknowledges the Centro de Ciencias Matemáticas UNAM for its institutional support and for granting access to its facilities throughout the doctoral program. The author expresses sincere gratitude to Prof. Abel Castorena for his guidance and continuous support during the development of this research, and to Prof. Angelo Felice Lopez for valuable ideas and insightful discussions during an academic visit.}
	\section{Preliminaries}
	\label{sec:pre}
    We work in the field of complex numbers $\mathbb{C}$. 
    \subsection{Gaussian maps and Wahl map}
    \begin{definition}
    Let $C$ be a smooth projective curve and $L$ a line bundle on $C$. The \textit{Gaussian map} for $L$ on $C$ is the map
    \begin{equation}
    \Phi_{C,L}:\bigwedge^{2} H^{0}(C,L)\rightarrow H^{0}(C,L^{2}\otimes\Omega_{C}^{1})
    \end{equation}
    given in terms of a local coordinate $z$ on $C$ and a trivialization of $L$ by sending $f(z)\wedge g(z)$ to $[fg' - gf']dz$. 
    \end{definition}
    \begin{definition}
     The \textit{Wahl map} of $C$ is the Gaussian map with $L=\Omega^{1}_{C}$ the canonical bundle of $C$:   
     \begin{equation}
    \Phi_{C}:\bigwedge^{2} H^{0}(C,\Omega_{C}^{1})\rightarrow H^{0}(C,(\Omega_{C}^{1})^{\otimes 3}).
    \end{equation}
    \end{definition}
    The definition of Gaussian map can be extended to surfaces. 
    \begin{definition}
        Let $S$ be a smooth surface and $M$ a line bundle on $S$. The \textit{Gaussian map} for $M$ on $S$ is the map
    \begin{equation}
    \Phi_{S,M}:\bigwedge^{2} H^{0}(S,M)\rightarrow H^{0}(S,M^{2}\otimes\Omega_{S}^{1})
    \end{equation}
    defined locally by  $f(z, w)\wedge g(z, w) \mapsto f\cdot dg- g\cdot df$, where ${z, w}$ are local coordinates on $S$.
    \end{definition}

    \subsection{The Standard Diagram}
    Consider a smooth curve  $C$ on a smooth surface $S$. If we take the line bundle $M$ on $S$ to be $\mathcal{O}_{S}(K_{S} + C)$, then by the adjunction formula $M\mid_{C}=\Omega^{1}_{C}$ and we have a commutative diagram
    \begin{equation} \label{DiagramaD}
        \tag{D}
           \begin{tikzcd}
        \bigwedge^2 H^{0}(S, \mathcal{O}_{S}(K_{S}+C)) \arrow{rrr}{\Phi_{S,\mathcal{O}_{S}(K_{S}+C)}} \arrow[swap]{dd}{\textnormal{Res}} & & & H^{0}(S,\Omega^{1}_{S}(2K_{S}+2C)) \arrow{d}{\alpha} \arrow[dd, bend left=85, "H^{0}(\rho)=\beta\, \circ \, \alpha"] \\%
        & & & H^{0}(C, \Omega^{1}_{S}(2K_{S}+2C)\mid_{C})  \arrow{d}{\beta} \\%
        \bigwedge^{2} H^{0}(C, \Omega^{1}_{C}) \arrow[swap]{rrr}{\Phi_{C}} && & H^{0}(C, (\Omega^{1}_{C})^{\otimes 3})
      \end{tikzcd}
    \end{equation}
    where the map Res is defined by $\bigwedge^{2}$res, and $\textnormal{res}:H^{0}(S, \mathcal{O}_{S}(K_{S}+C))\rightarrow H^{0}(C, \Omega^{1}_{C})$ is the restriction to $C$, and the vertical maps $\alpha$ and $\beta$ on the right are defined as follows: we will denote by $H^{0}(\rho)$ the composition $\beta\circ\alpha$.\\
    Consider the classical short exact sequence on $S$:
    \begin{equation} \label{se1}
     0\rightarrow \mathcal{O}_{S}(-C) \rightarrow \mathcal{O}_{S} \rightarrow \mathcal{O}_{C} \rightarrow 0.
    \end{equation}
    Tensoring the sequence (\ref{se1}) with $\mathcal{O}_{S}(2K_{S}+2C)\otimes \Omega^{1}_{S}$ yields:
    \begin{equation} \label{sucesioneuleromega}
        0\rightarrow \Omega^{1}_{S}(2K_{S}+C) \rightarrow \Omega^{1}_{S}(2K_{S}+2C) \rightarrow \Omega^{1}_{S}(2K_{S}+2C)\mid_{C} \rightarrow 0,
    \end{equation}
    and the map $\alpha$ appears in the long exact sequence in cohomology,
    \begin{equation*}
        0 \rightarrow H^{0}(S, \Omega^{1}_{S}(2K_{S}+C)) \rightarrow H^{0}(S,\Omega^{1}_{S}(2K_{S}+2C))\xrightarrow{\alpha}H^{0}(C, \Omega^{1}_{S}(2K_{S}+2C)\mid_{C})\rightarrow \cdots
    \end{equation*}
    For the map $\beta$, we use the dual of the normal sequence for $C$ in $S$
    \begin{equation} \label{dualnormal}
    0\rightarrow N^{\vee}_{C/S}\rightarrow \Omega^{1}_{S}\mid_{C}\rightarrow \Omega^{1}_{C}\rightarrow 0,
    \end{equation}
    where $N_{C/S}=\mathcal{O}_{S}(C)\mid_{C}$. Tensoring the sequence (\ref{dualnormal}) by $\mathcal{O}_{S}(2K_{S}+2C)\mid_{C}=(\Omega^{1}_{C})^{\otimes 2}$, we have the exact sequence:
    \begin{equation}\label{sucesiondualnormal}
    0\rightarrow \mathcal{O}_{S}(2K_{S}+C)\mid_{C} \rightarrow \Omega^{1}_{S}(2K_{S}+2C)\mid_{C} \rightarrow (\Omega^{1}_{C})^{\otimes 3} \rightarrow 0,
     \end{equation}
    which gives the map $\beta$
    \begin{equation*}
        0 \rightarrow H^{0}(C, \mathcal{O}_{S}(2K_{S}+C)\mid_{C})\rightarrow H^{0}(C,\Omega^{1}_{S}(2K_{S}+2C)\mid_{C})\xrightarrow{\beta} H^{0}(C, (\Omega^{1}_{C})^{\otimes 3})\rightarrow \cdots
    \end{equation*}
    \begin{remark} \label{remarkcorank1}
    From diagram $\ref{DiagramaD}$, we have
    \[
    \textnormal{corank}(\Phi_{C}\circ \textnormal{Res}) = \textnormal{corank}(H^{0}(\rho) \circ \Phi_{S, \mathcal{O}_{S}(K_{S}+C)}).
    \]
    In particular, if both Res and $\Phi_{S, \mathcal{O}_{S}(K_{S}+C)}$ are surjective, then $\textnormal{corank}(\Phi_{C})=\textnormal{corank}(H^{0}(\rho))$.
    \end{remark}

    \subsection{Logarithmic Sheaves}
    To study the corank of $H^{0}(\rho)$, we employ sheaves of differential forms with logarithmic poles. Let $\Omega^{1}_{S}(\log C)$ denote the sheaf of $1$-forms with logarithmic poles along $C$. Locally, if $z_{1},z_{2}$ are local coordinates on $S$ with $C = \{z_2 =0\}$, then $\Omega^{1}_{S}(\log C)$ is locally generated by $dz_{1},\frac{dz_{2}}{z_{2}}$.\\

    From (\cite{MR1193913}, 2.3c), we have the exact sequence:
    \begin{equation}\label{sucesionlogarithmic}
    0\rightarrow \Omega^{1}_{S}(\log C)(-C)\rightarrow\Omega^{1}_{S} \rightarrow\Omega^{1}_{C}\rightarrow 0,
    \end{equation}
    note that the kernel of $\Omega^{1}_{S}\rightarrow\Omega^{1}_{C}$ is locally generated by $z_{2}dz_{1}$, $dz_{2}$. But these are exactly the local generators of the subsheaf $\Omega^{1}_{S}(\log C)(-C)\subset\Omega^{1}_{S}(\log C)$, for further details, see \cite{MR1193913, MR2095471}.\\
    Tensoring (\ref{sucesionlogarithmic}) with $\mathcal{O}_{S}(2K_{S}+2C)$ we obtain
    \begin{equation}
    \label{sucesionlogarithmic2}
    0\rightarrow \Omega^{1}_{S}(\log C)(2K+C)\rightarrow\Omega^{1}_{S}(2K_{S}+2C) \xrightarrow{\rho}\left(\Omega^{1}_{C}\right)^{\otimes3}\rightarrow 0
    \end{equation}
    which gives the map $H^{0}(\rho)$
    \begin{equation*}
        0 \rightarrow H^{0}(S, \Omega^{1}_{S}(\log C)(2K+C))\rightarrow H^{0}(S,\Omega^{1}_{S}(2K+2C))\xrightarrow{H^{0}(\rho)} H^{0}(C, (\Omega^{1}_{C})^{\otimes 3})\rightarrow \cdots,
    \end{equation*}
    where $H^{0}(\rho)$ is exactly the composition  $\beta \,\circ\, \alpha$.\\
    Another relevant exact sequence from (\cite{MR1193913}, 2.3a) in our case, by tensoring with $\mathcal{O}_{S}(2K_{S}+C)$, is as follows:
    \begin{equation}\label{sucesionlogarithmic3}
    0 \rightarrow \Omega^{1}_{S}(2K_{S}+C)\rightarrow \Omega^{1}_{S}(\log C)(2K_{S}+C) \rightarrow\mathcal{O}_{S}(2K_{S}+C)\mid_{C}\rightarrow 0.
    \end{equation}
    We will use these exact sequence extensively when $S$ is the blow up of a Hirzebruch surface $\mathbb{F}_{n}$ and $C$ is the normalization of a nodal curve. 

   \subsection{Demostrations of some properties of higher direct images of sheaves on the blow-up surface} 
Let $\sigma:X=\textnormal{Bl}_{Z}\mathbb{F}_{n}\rightarrow\mathbb{F}_{n}$ be the blow up of $\mathbb{F}_{n}$ at $Z=\{p_{1},\cdots,p_{\delta}\}$ where $p_{1},\cdots,p_{\delta}$ are $\delta$ distinct points with the exceptional divisor $E=E_{1}+E_{2}+\cdots+E_{\delta}$ and $E_{j}=\sigma^{-1}(p_{j})$ for $j=1,\cdots,\delta$.
\begin{itemize}
    \item  $R^{i}\sigma_{\ast}\mathcal{O}_{X}(-rE)=0$ for $r\in\mathbb{Z}_{\geq0}$ and $i\geq 1$.
 \begin{proof}
Note that the sheaves $R^{i}\sigma_{*}\mathcal{O}_{X}(-rE)$ for $i>0$ have support and $Z$. We use the theorem on formal function to compute these sheaves at each point. Consider $\widehat{E}:=E_{j}$ and $p:=p_{j}$ for some $j\in\{1,\cdots, \delta\}$.
For each $t\geq 1$, we define $X_{(t)}:=X \times_{\mathbb{F}_{n}}\textnormal{Spec}(\mathcal{O}_{p}/\mathfrak{m}_{p}^{t})$, note that for $t=1$ we get the fiber at $p$, that is $X_{(1)}\cong \widehat{E}$, and for $t>1$ we get a scheme with nilpotent elements having the same underlying space as $X_{(1)}$, that is, $X_{(t)}\cong t\widehat{E}$.\\
     \[
\begin{tikzcd}
 X_{t}\arrow{r}{\nu}\arrow{d} & X \arrow{d}{\sigma}\\
 \textnormal{Spec } \mathcal{O}_{p}/\mathfrak{m}_{p}^{t} \arrow{r} & \mathbb{F}_{n}
 \end{tikzcd}
 \]
 Let $\mathcal{G}_{t}:=\nu^{\ast}\mathcal{O}_{X}(-rE)$, where $\nu:X_{t}\rightarrow X$ is the natural map. The Theorem on Formal Functions (\cite{MR463157},III,11.1) says that 
 \begin{equation*}
     R^{i}\sigma_{*}\mathcal{O}_{X}(-rE)^{\widehat{}}_{p}=\lim_{\longleftarrow}H^{i}(X_{(t)}, \mathcal{G}_{t}).
 \end{equation*} 
 Note that, $\nu^{\ast}\mathcal{O}_{X}(-rE)=\nu^{-1}\mathcal{O}_{X}(-rE)\otimes_{\nu^{-1}\mathcal{O}_{X}}\mathcal{O}_{X_{t}}\cong \mathcal{O}_{X}(-rE)\mid_{X_{t}}$.\\
 We have the exact sequence on $X$
 \begin{equation*}
     0\rightarrow \mathcal{O}_{\widehat{E}}(-t\widehat{E})\rightarrow \mathcal{O}_{t\widehat{E}+\widehat{E}}\rightarrow \mathcal{O}_{t\widehat{E}}\rightarrow 0,
 \end{equation*}
 then
 \begin{equation*}
     0\rightarrow \mathcal{O}_{\widehat{E}}(-t\widehat{E}-rE)\rightarrow \mathcal{G}_{t+1}\rightarrow \mathcal{G}_{t}\rightarrow 0.
 \end{equation*}
 Note that, $H^{i}(\widehat{E}, \mathcal{O}_{\widehat{E}}(-t\widehat{E}-rE))\cong H^{i}(\mathbb{P}^{1}, \mathcal{O}_{\mathbb{P}^{1}}(t+r))$ since $\mathcal{O}_{\widehat{E}}(\widehat{E})\ \cong \mathcal{O}_{X}(-1)\mid_{\widehat{E}}\cong \mathcal{O}_{\mathbb{P}^{1}}(-1)$ and $\mathcal{O}_{\widehat{E}}(-t\widehat{E}-rE)\cong\mathcal{O}_{\widehat{E}}(-t\widehat{E}-r\widehat{E})$.\\
 Thus, for each $i\geq 1$, $H^{i}(\widehat{E}, \mathcal{O}_{\widehat{E}}(-t\widehat{E}-rE))=0$ for all $t\geq 1$. Therefore, for $t\geq 1$
 \begin{equation*}
     H^{i}(X_{(t+1)}, \mathcal{G}_{t+1})\cong H^{i}(X_{(t)}, \mathcal{G}_{t}).
 \end{equation*}
 Since $H^{i}(X_{(1)},\mathcal{G}_{1})= H^{i}(\widehat{E}, \mathcal{O}_{\widehat{E}}(-rE))\cong  H^{i}(\mathbb{P}^{1},\mathcal{O}_{\mathbb{P}^{1}}(r))\cong 0$ then $H^{i}(X_{t}, \mathcal{F}_{t})=0$ for all $r\geq 0$ and $i\geq1$, and
 \begin{equation*}
     R^{i}\sigma_{\ast}\mathcal{O}_{X}(-rE)^{\widehat{}}_{p}=0.
 \end{equation*}
 Since $R^{i}\sigma_{*}\mathcal{O}_{X}(-rE)$ is a coherent sheaf with support at $Z$, and $R^{i}\sigma_{*}\mathcal{O}_{X}(-rE)^{\widehat{}}_{p_{j}}=0$ for all $p_{j}\in Z$, then $R^{i}\sigma_{*}\mathcal{O}_{X}(-rE)=0$.\\
 Further, using the exact sequence
 \begin{equation*}
    0 \rightarrow \mathcal{O}_{X}(-E) \rightarrow \mathcal{O}_{X} \rightarrow \mathcal{O}_{E}\rightarrow 0,
\end{equation*}
we have a long exact sequence 
\begin{align*} 
0 & \rightarrow \sigma_{\ast}\mathcal{O}_{X}(-E) \rightarrow \sigma_{\ast}\mathcal{O}_{X} \rightarrow \sigma_{\ast}\mathcal{O}_{E} \rightarrow \\ 
 & \rightarrow R^{1}\sigma_{\ast}\mathcal{O}_{X}(-E) \rightarrow R^{1}\sigma_{\ast}\mathcal{O}_{X}\rightarrow R^{1}\sigma_{\ast}\mathcal{O}_{E} \rightarrow \\
 & \rightarrow R^{2}\sigma_{\ast}\mathcal{O}_{X}(-E) \rightarrow R^{2}\sigma_{\ast}\mathcal{O}_{X}\rightarrow R^{2}\sigma_{\ast}\mathcal{O}_{E} \rightarrow 0,
\end{align*}
it is well known that $\sigma_{\ast}\mathcal{O}_{X}=\mathcal{O}_{\mathbb{F}_{n}}, \sigma_{\ast}\mathcal{O}_{E}=\mathcal{O}_{Z}$, then we can conclude that  $\sigma_{\ast}\mathcal{O}_{X}(-E)=\mathcal{I}_{Z}$, where $\mathcal{I}_{Z}$ is the ideal sheaf of $Z$ on $\mathbb{F}_{n}$. Similarly, for $r\geq1$, we can show that $\sigma_{*}\mathcal{O}_{X}(-rE)\cong \mathcal{I}_{Z}^{r}$ . \\
 \end{proof}
  
	\item $R^{1}\sigma_{*}\Omega^{1}_{X}(\log\widetilde{C})=0$:
    \begin{proof}
    Set $\widehat{E}:=E_{j}$ and $p:=p_{j}$ for some $j\in \{1,\dots,\delta\}$.\\
    For each $t\geq1$, we define $X_{(t)}:=X\times_{\mathbb{F}_{n}}Spec (\mathcal{O}_{\mathbb{F}_{n},p}/\mathfrak{m}^{t}_{p})$. Note that for $t=1$, we get the fiber at $p$ that is $X_{(1)}\cong \widehat{E}$, and for $t>1$ we get a scheme with nilpotent elements having the same underlying space as $X_{(1)}$, that is, $X_{(t)}\cong t\widehat{E}$. We have the diagram
      \[
\begin{tikzcd}
 X_{(t)}\arrow{r}{\nu}\arrow{d} & X \arrow{d}{\sigma}\\
 \textnormal{Spec } \mathcal{O}_{p}/\mathfrak{m}_{p}^{t} \arrow{r} & \mathbb{F}_{n}
 \end{tikzcd}
 \]
 Let $\mathcal{F}_{t}:=\nu^{*}\Omega^{1}_{X}(\log \widetilde{C})$, where $\nu: X_{(t)}\rightarrow X$ is the natural map. The Theorem on Formal Functions (\cite{MR463157},III,11.1) says that 
 \begin{equation*}
     R^{1}\sigma_{*}\Omega^{1}_{X}(\log\widetilde{C})^{\widehat{}}_{p}=\lim_{\longleftarrow}H^{1}(X_{(t)}, \mathcal{F}_{t}).
 \end{equation*}
 Note that, $\nu^{*}\Omega^{1}_{X}(\log \widetilde{C})=\nu^{-1}\Omega^{1}_{X}(\log \widetilde{C})\otimes_{\nu^{-1}\mathcal{O}_{X}}\mathcal{O}_{X_{(t)}}\cong \Omega^{1}_{X}(\log \widetilde{C})|_{X_{(t)}}$.\\
 We have the exact sequence on $X$
 \begin{equation*}
     0 \rightarrow \mathcal{O}_{\widehat{E}}(-t\widehat{E})\rightarrow \mathcal{O}_{t\widetilde{E}+\widetilde{E}}\rightarrow \mathcal{O}_{t\widehat{E}}\rightarrow 0,
 \end{equation*}
 then
 \begin{equation*}
     0 \rightarrow \Omega^{1}_{X}(\log \widetilde{C})\otimes\mathcal{O}_{\widehat{E}}(-t\widehat{E})\rightarrow \Omega^{1}_{X}(\log \widetilde{C})\otimes\mathcal{O}_{X_{(t+1)}}\rightarrow \Omega^{1}_{X}(\log \widetilde{C})\otimes\mathcal{O}_{X_{(t)}}\rightarrow 0,
 \end{equation*}
 that is
 \begin{equation*}
     0 \rightarrow \mathcal{F}_{1}\otimes\mathcal{O}_{\widehat{E}}(-t\widehat{E})\rightarrow \mathcal{F}_{t+1 }\rightarrow \mathcal{F}_{t }\rightarrow 0.
 \end{equation*}
 Now, we will show that $H^{1}(\mathcal{F}_{1}\otimes\mathcal{O}_{\widehat{E}}(-t\widehat{E}))=0$, which implies that $H^{1}(X_{(t+1)},\mathcal{F}_{t+1})\cong H^{1}(X_{(t)},\mathcal{F}_{t})$, that is $R^{1}\sigma_{*}\Omega^{1}_{X}(\log\widetilde{C})^{\widehat{}}_{p}$ is trivial. Since $R^{1}\sigma_{*}\Omega^{1}_{X}(\log \widetilde{C})$ is a coherent sheaf with support at $Z$, and $R^{1}\sigma_{*}\Omega^{1}_{X}(\log\widetilde{C})^{\widehat{}}_{p_{j}}=0$ for all $p_{j}\in Z$, then $R^{1}\sigma_{*}\Omega^{1}_{X}(\log\widetilde{C})=0$.\\
    By tensoring the sequence (\ref{secotang2}) by $\mathcal{O}_{\widehat{E}}$, we obtain
\begin{equation*}
    0\rightarrow Tor^{1}_{X}(\mathcal{O}_{\widehat{E}}(2\widehat{E}),\mathcal{O}_{\widehat{E}})\rightarrow \left( \sigma^{*}\Omega^{1}_{\mathbb{F}_{n}} \right)|_{\widehat{E}} \rightarrow \left(\Omega^{1}_{X} \right)|_{\widehat{E}} \rightarrow \mathcal{O}_{\widehat{E}}(2\widehat{E})\rightarrow 0.
\end{equation*}
Observe that $Tor^{1}_{X}(\mathcal{O}_{\widehat{E}}(2\widehat{E}),\mathcal{O}_{\widehat{E}})\cong \mathcal{O}_{\widehat{E}}(\widehat{E})$ and $\left( \sigma^{*}\Omega^{1}_{\mathbb{F}_{n}} \right)|_{\widehat{E}}\cong \mathcal{O}_{\widehat{E}}\oplus \mathcal{O}_{\widehat{E}}$, this implies that $\left(\Omega^{1}_{X} \right)|_{\widehat{E}} \cong \mathcal{O}_{\widehat{E}}(2\widehat{E})\oplus \mathcal{O}_{\widehat{E}}(-\widehat{E})$.\\
Let $B:=\widetilde{C}\cap \widehat{E}=\{q_{1},q_{2}\}$ be the intersection points since $\widetilde{C}.\widehat{E}=2$. Consider the inclusion $i:\widehat{E}\hookrightarrow X$, the diagram of (\cite{MR4665627}, Lemma 2.2) in our setting is as follows
       \[
\begin{tikzcd}
               &  0  \arrow{d} & 0  \arrow{d} &  0  \arrow{d} &    \\
    0 \arrow{r}  &  T_{\widehat{E}}(-\log B) \arrow{r} \arrow{d} & i^{*}T_{X}(-\log\widetilde{C}) \arrow{r} \arrow{d} &  N_{\log \widetilde{C}/B} \arrow{r} \arrow{d} &  0  \\
    0 \arrow{r}  &  T_{\widehat{E}} \arrow{r} \arrow{d} & i^{*}T_{X} \arrow{r} \arrow{d} &  N_{\widehat{E}|X} \arrow{r} \arrow{d} &  0   \\
    0 \arrow{r}  &  \mathcal{O}_{B} \arrow{r} \arrow{d} & i^{*}\mathcal{O}_{\widetilde{C}}(\widetilde{C}) \arrow{r} \arrow{d} &  0 \arrow{r} \arrow{d} &  0  \\
                &  0  & 0  &  0  &  
\end{tikzcd}
\]
where $T_{X}(-\log \widetilde{C})$ is called the logarithmic tangent sheaf associated to $\widetilde{C}$, and is the dual sheaf of $\Omega^{1}_{X}(\log \widetilde{C})$;  it is the sheaf of holomorphic vector fields tangent to $\widetilde{C}$. Similarly, for the sheaf $T_{\widehat{E}}(-\log B)$ on $\widehat{E}$. Since
$T_{\widehat{E}}\cong T_{\mathbb{P}^{1}}\cong\mathcal{O}_{\mathbb{P}^{1}}(2)$ and $\mathcal{O}_{B}=\mathcal{O}_{q_{1}}\oplus\mathcal{O}_{q_{2}}$ then we conclude that $T_{\widehat{E}}(-\log B)\cong \mathcal{O}_{\mathbb{P}^{1}}$. As $N_{\widehat{E}|X}\cong \mathcal{O}_{\widehat{E}}(\widehat{E})\cong\mathcal{O}_{\mathbb{P}^{1}}(-1)$ then $i^{*}T_{X}=T_{X}|_{\widehat{E}}\cong\mathcal{O}_{\mathbb{P}^{1}}(2)\oplus\mathcal{O}_{\mathbb{P}^{1}}(-1)$. Therefore $i^{*}T_{X}(-\log \widetilde{C})=T_{X}(-\log\widetilde{C})|_{\widehat{E}}\cong\mathcal{O}_{\mathbb{P}^{1}}(-1)\oplus\mathcal{O}_{\mathbb{P}^{1}}$ since 
$N_{\log \widetilde{C}/B}\cong \mathcal{O}_{\mathbb{P}^{1}}(-1)$.\\
Thus, we obtain
\begin{equation*}
    \mathcal{F}_{1}=\Omega^{1}_{X}(\log \widetilde{C})|_{\widehat{E}}\cong \mathcal{O}_{\mathbb{P}^{1}}(1)\oplus\mathcal{O}_{\mathbb{P}^{1}}
\end{equation*}
and therefore $\mathcal{F}_{1}\otimes\mathcal{O}_{\widehat{E}}(-t\widehat{E})\cong ( \mathcal{O}_{\mathbb{P}^{1}}(1)\oplus\mathcal{O}_{\mathbb{P}^{1}})\otimes \mathcal{O}_{\mathbb{P}^{1}}(t)\cong \mathcal{O}_{\mathbb{P}^{1}}(t+1)\oplus\mathcal{O}_{\mathbb{P}^{1}}(t)$.
In particular, we have that $H^{1}(\mathcal{F}_{1}\otimes \mathcal{O}_{\widehat{E}}(-t\widehat{E}))\cong 0$.
\end{proof}
\end{itemize}
We will use these properties in the next section.
   
     \section{The Wahl map of the normalization of a $\delta$-nodal curve on $\mathbb{F}_{n}$}
\label{sec:3}
     \subsection{Setup and Notation}
     Let $\mathbb{F}_{n}$ be the Hirzebruch surface Proj$(\mathcal{O}_{\mathbb{P}^{1}}\oplus\mathcal{O}_{\mathbb{P}^{1}}(-n))$, $n\geq0$; and let $\phi:\mathbb{F}_{n}\rightarrow\mathbb{P}^{1}$ the canonical projection map. It is well known that the Picard group of $\mathbb{F}_{n}$ is generated by two class: $F$, which represents a fiber, and $C_{0}$, which is the class of the special section of the fibration of $\phi$. In other words, Pic$(\mathbb{F}_{n})=\mathbb{Z}[C_{0}]\oplus\mathbb{Z}[F]$, where the intersections satisfy the relations $C_{0}^{2}=-n$, $C_{0}\cdot F=1$ and $F^{2}=0$. Since Hirzebruch surfaces are ruled, all fibers are isomorphic and numerically equivalent. The canonical divisor class $K_{\mathbb{F}_{n}}$ is given by $K_{\mathbb{F}_{n}}=-2C_{0}-(2+n)F$.\\
     
     Let $C\subset \mathbb{F}_{n}$ a curve with $\delta$ nodal points $p_{1},\dots,p_{\delta}$ and no other singularities. Let 
     \[
     \sigma: X=Bl_{Z}\mathbb{F}_{n}\rightarrow\mathbb{F}_{n}
     \]
     be the blow-up of $\mathbb{F}_{n}$ at $Z=\{p_{1},\dots,p_{\delta}\}$ with exceptional divisor $E=\sum_{j=1}^{\delta}E_{j}$, where $E_{j}=\sigma^{-1}(p_{j})$. The intersection numbers satisfy $E_{j}^{2}=-1$ and $E_{j}\cdot E_{i}=0$ for $i\neq j$, thus $E^{2}=-\delta$. It is well known that Pic$(X)\cong\sigma^{*}\textnormal{Pic}(\mathbb{F}_{n})\oplus \mathbb{Z}[E_{1}]\oplus \cdots \mathbb{Z}[E_{\delta}]$.\\
     Let $\widetilde{C}$ be the normalization of $C$ under $\sigma$, with $C$ linearly equivalent to $aC_{0}+bF$ for some integers $a,b\geq 0$. Then $\widetilde{C}=\sigma^{*}C-2E$ and the canonical divisor on $X$ is $K_{X}=\sigma^{*}K_{\mathbb{F}_{n}}+E$. We have the intersection numbers: 
     \begin{itemize}
            \item $\widetilde{C}^{2}=C^{2}+4E^{2}=C^{2}-4\delta=2ab-a^{2}n-4\delta$,
            \item $K_{X}^{2}=K_{\mathbb{F}_{n}}^{2}+E^{2}=8-\delta$,
             \item $K_{X}\cdot\widetilde{C}=K_{\mathbb{F}_{n}}\cdot C-2E^{2}=K_{\mathbb{F}_{n}}\cdot C+2\delta=an-2a-2b+2\delta$.
     \end{itemize} 
     The arithmetic genus of $C$ is:
     \[
     g=1+\frac{C^{2}+C\cdot K_{\mathbb{F}_{n}}}{2}=1+ab-a-b+\frac{an(1-a)}{2},
     \]
     so the geometric genus of $\widetilde{C}$ is $\widetilde{g}=g-\delta$.\\
     
     The diagram (\ref{DiagramaD}) becomes:
    \begin{equation}\label{diagramaestandar}
        \begin{tikzcd}
         \bigwedge^2 H^{0}(X, \mathcal{O}_{X}(K_{X}+\widetilde{C})) \arrow{rrr}{\Phi_{X,\mathcal{O}_{X}(K_{X}+\widetilde{C})}} \arrow[swap]{dd}{\textnormal{Res}} & & & H^{0}(X,\Omega^{1}_{X}(2K_{X}+2\widetilde{C})) \arrow{d}{\alpha} \arrow[dd, bend left=85, "H^{0}(\rho)"]
         \\%
         & & & H^{0}(\widetilde{C}, \Omega^{1}_{X}(2K_{X}+2\widetilde{C})\mid_{\widetilde{C}})  \arrow{d}{\beta} \\%
        \bigwedge^{2} H^{0}(\widetilde{C}, \Omega^{1}_{\widetilde{C}}) \arrow[swap]{rrr}{\Phi_{\widetilde{C}}} && & H^{0}(\widetilde{C}, (\Omega^{1}_{\widetilde{C}})^{\otimes 3})
       \end{tikzcd}
    \end{equation}

    \begin{proposition} \label{propositionres}
        The restriction maps $\textnormal{res: } H^{0}(X, \mathcal{O}_{X}(K_{X}+\widetilde{C}))\rightarrow H^{0}(\widetilde{C},\Omega^{1}_{\widetilde{C}})$ and $\textnormal{Res }=\bigwedge^{2}$res are surjective maps.
    \end{proposition}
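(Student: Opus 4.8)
The plan is to deduce the surjectivity of $\textnormal{res}$ from a single cohomology vanishing on $X$, and then obtain the surjectivity of $\textnormal{Res}=\bigwedge^{2}\textnormal{res}$ formally. Since $\widetilde{C}$ is smooth, the adjunction formula gives $\mathcal{O}_{X}(K_{X}+\widetilde{C})\mid_{\widetilde{C}}=\Omega^{1}_{\widetilde{C}}$, so tensoring the structure sequence of $\widetilde{C}$ in $X$ by $\mathcal{O}_{X}(K_{X}+\widetilde{C})$ produces the short exact sequence
\begin{equation*}
0\rightarrow \mathcal{O}_{X}(K_{X})\rightarrow \mathcal{O}_{X}(K_{X}+\widetilde{C})\rightarrow \Omega^{1}_{\widetilde{C}}\rightarrow 0.
\end{equation*}
Passing to the long exact sequence in cohomology, the map $\textnormal{res}$ appears as $H^{0}(X,\mathcal{O}_{X}(K_{X}+\widetilde{C}))\rightarrow H^{0}(\widetilde{C},\Omega^{1}_{\widetilde{C}})$, and its cokernel injects into $H^{1}(X,\mathcal{O}_{X}(K_{X}))$. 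Thus it suffices to show that $H^{1}(X,\mathcal{O}_{X}(K_{X}))=0$.

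First I would compute this group by Serre duality on the smooth projective surface $X$: $H^{1}(X,\mathcal{O}_{X}(K_{X}))\cong H^{1}(X,\mathcal{O}_{X})^{\vee}$. Since $X$ is the blow-up of the Hirzebruch surface $\mathbb{F}_{n}$ at finitely many points, it is a rational surface, hence regular, i.e.\ $h^{1}(X,\mathcal{O}_{X})=0$. Concretely, one has $\sigma_{\ast}\mathcal{O}_{X}=\mathcal{O}_{\mathbb{F}_{n}}$ and $R^{1}\sigma_{\ast}\mathcal{O}_{X}=0$ (the case $r=0$ of the vanishing $R^{i}\sigma_{\ast}\mathcal{O}_{X}(-rE)=0$ established in Section~\ref{sec:pre}), so the low-degree terms of the Leray spectral sequence give $H^{1}(X,\mathcal{O}_{X})\cong H^{1}(\mathbb{F}_{n},\mathcal{O}_{\mathbb{F}_{n}})=0$, the last vanishing being standard for Hirzebruch surfaces. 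Hence $H^{1}(X,\mathcal{O}_{X}(K_{X}))=0$ and $\textnormal{res}$ is surjective.

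Finally, for $\textnormal{Res}=\bigwedge^{2}\textnormal{res}$ I would invoke the elementary fact that the second exterior power of a surjective linear map of vector spaces is again surjective: given a decomposable element $w_{1}\wedge w_{2}\in\bigwedge^{2}H^{0}(\widetilde{C},\Omega^{1}_{\widetilde{C}})$, lift $w_{1},w_{2}$ to preimages $v_{1},v_{2}$ under $\textnormal{res}$, so that $\bigwedge^{2}\textnormal{res}(v_{1}\wedge v_{2})=w_{1}\wedge w_{2}$; since such decomposable elements span the target, $\textnormal{Res}$ is surjective.

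I do not anticipate a genuine obstacle here: the whole argument rests on the single vanishing $H^{1}(X,\mathcal{O}_{X})=0$, which is guaranteed by the rationality of $X$. The only point requiring a little care is confirming that blowing up preserves this vanishing, for which the identities $\sigma_{\ast}\mathcal{O}_{X}=\mathcal{O}_{\mathbb{F}_{n}}$ and $R^{1}\sigma_{\ast}\mathcal{O}_{X}=0$ recorded in the previous section are exactly what is needed.
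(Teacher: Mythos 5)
Your proposal is correct and follows essentially the same route as the paper: both reduce the surjectivity of $\textnormal{res}$ to the vanishing of $H^{1}(X,\mathcal{O}_{X}(K_{X}))$ via the twisted structure sequence of $\widetilde{C}$, and both obtain that vanishing from $h^{1}(X,\mathcal{O}_{X})=0$ (the paper cites birational invariance of the irregularity, you use the Leray spectral sequence with $R^{1}\sigma_{\ast}\mathcal{O}_{X}=0$, which amounts to the same thing) together with Serre duality. Your explicit remark that the second exterior power of a surjection is a surjection is a harmless elaboration of a step the paper leaves implicit.
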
 
    \begin{proof}
    Tensoring the sequence (\ref{se1}) with $\mathcal{O}_{X}(K_{X}+\widetilde{C})$, we obtain the short exact sequence
    \begin{equation*}
    0\rightarrow \mathcal{O}_{X}(K_{X}) \rightarrow \mathcal{O}_{X}(K_{X}+\widetilde{C}) \rightarrow \Omega^{1}_{\widetilde{C}} \rightarrow 0,
    \end{equation*}
    we obtain the long exact sequence:
    \begin{equation*}
       0 \rightarrow H^{0}(X, \mathcal{O}_{X}(K_{X})) \rightarrow H^{0}(X,\mathcal{O}_{X}(K_{X}+\widetilde{C}))\xrightarrow{\textnormal{res}} H^{0}(\widetilde{C}, \Omega^{1}_{\widetilde{C}}) \rightarrow H^{1}(X,\mathcal{O}_{X}(K_{X})).
    \end{equation*}
    Since the irregularity of $X$, defined as $q(X)=h^{1}(X,\mathcal{O}_{X})$, is a birational invariant, then we have that $q(X) = q(\mathbb{F}_{n})  = h^{1}(\mathbb{F}_{n},\mathcal{O}_{\mathbb{F}_{n}}) = 0$ and, by Serre duality, $ H^{1}(X,\mathcal{O}_{X}) \cong  H^{1}(X,\mathcal{O}_{X}(K_{X}))=0$. Hence, both $\mathrm{res}$ and $\mathrm{Res}$ are surjective maps.
   \end{proof}

\begin{remark} \label{remarkcorank2}
By Proposition \ref{propositionres} and Remark \ref{remarkcorank1}, we have:
   \[
\textnormal{corank}\left(\Phi_{\widetilde{C}}\right)=\textnormal{corank}\left(H^{0}(\rho) \circ \Phi_{X, \mathcal{O}_{X}(K_{X}+\widetilde{C})}\right).
   \]
In particular, if $\Phi_{X,\mathcal{O}_{X}(K_{X}+\widetilde{C})}$ is surjective, then $\textnormal{corank}(\Phi_{\widetilde{C}})=\textnormal{corank}(H^{0}(\rho))$.
\end{remark}
We study these two maps to compute the corank of $H^{0}(\rho)=\beta \circ \alpha$, and then proving the surjectivity of $\Phi_{X,\mathcal{O}_{X}(K_{X}+\widetilde{C})}$ under certain conditions on the coefficients $a$ and $b$ that define the curve $C$.

    \subsection{Cohomological Computations}
     \label{subsec:corankbetaalpha}
  By the exact sequences (\ref{sucesioneuleromega}), (\ref{sucesiondualnormal}), (\ref{sucesionlogarithmic2}) and (\ref{sucesionlogarithmic3}), we also have the following commutative diagram:
  \begin{equation}\label{diagramaconmuativo1}
 \begin{tikzcd}[column sep=1.5em,row sep=1.5em]
 &  0\arrow{d}  & 0\arrow{d} &  & 
 \\%
& \Omega^{1}(2K_{X}+\widetilde{C}) \arrow{d} \arrow[r, equal] & \Omega^{1}(2K_{X}+\widetilde{C})\arrow{d} &  & 
 \\
0\arrow{r} & \Omega^{1}_{X}(\log \widetilde{C})(2K_{X}+\widetilde{C})\arrow{d} \arrow{r}  & \Omega^{1}(2K_{X}+2\widetilde{C})\arrow{d} \arrow{r}&(\Omega^{1}_{\widetilde{C}})^{\otimes 3})\arrow{r} \arrow[d, equal] & 0 
 \\
0\arrow{r} &  \mathcal{O}_{\widetilde{C}}(2K_{X}+\widetilde{C})\arrow{d}\arrow{r}  &\Omega^{1}(2K_{X}+2\widetilde{C})|_{\widetilde{C}}\arrow{d}\arrow{r} & (\Omega^{1}_{\widetilde{C}})^{\otimes 3}\arrow{r} & 0 
 \\
& 0   & 0 &  & 
\end{tikzcd}
\end{equation}
this diagram is the dual to the diagram on (\cite{MR2247603}, 3.56) but here he use \textit{the sheaf of germs of tangent vectors to $X$ which are tangent to $\widetilde{C}$}.\\

First, we compute the cohomology groups appearing in the standard diagram (\ref{diagramaestandar}) and establish conditions for the vanishing of certain cohomology groups in diagram (\ref{diagramaconmuativo1}).
     Consider a blow-up diagram
    \[
      \begin{tikzcd}
       \widetilde{C} \arrow[r, hookrightarrow] \arrow{d}{\varphi=\sigma|_{\widetilde{C}}}  &  X\arrow{d}{\sigma}\\ C\arrow[r, hookrightarrow] & \mathbb{F}_{n}
      \end{tikzcd}
    \]
   Note that 
   \begin{equation*}
    \begin{split}
        d\sigma:T_{X}\rightarrow\sigma^{*}T_{\mathbb{F}_{n}}
    \end{split}
   \end{equation*}
   is an isomorphism off $E$. By (\cite{MR1644323}, Lemma 15.4), there are short exact sequences
   \begin{equation}\label{setang}
    \begin{split}
        0 \rightarrow T_{X}\rightarrow\sigma^{*}T_{\mathbb{F}_{n}}\rightarrow \mathcal{Q}\rightarrow 0,    \end{split}
   \end{equation}
   where the sheaf $\mathcal{Q}$ is supported on $E$, and fits into the following sequence:
   \begin{equation*}
        \begin{split}
        0 \rightarrow \mathcal{O}_{E}(-1)\rightarrow\mathcal{O}_{E}^{\oplus2}\rightarrow \mathcal{Q}\rightarrow 0,    
        \end{split}
   \end{equation*}
   with $E=E_{1}+\cdots +E_{\delta}$. Then, we have $\mathcal{Q}\cong \mathcal{O}_{E}(1)$. Dualizing the sequence $(\ref{setang})$, we get a short exact sequence of sheaves on $X$
   \begin{equation}\label{secotang}
    \begin{split}
        0 \rightarrow \sigma^{*}\Omega^{1}_{\mathbb{F}_{n}}\rightarrow\Omega^{1}_{X}\rightarrow \Omega^{1}_{X|\mathbb{F}_{n}}\rightarrow 0,    \end{split}
   \end{equation}
   where $\Omega^{1}_{X|\mathbb{F}_{n}}$ is the relative cotangent sheaf, and $\Omega^{1}_{X|\mathbb{F}_{n}}\cong \mathcal{E}xt^{1}_{X}(\mathcal{Q},\mathcal{O}_{X})$. By (\cite{MR463157}, III Proposition 6.5) we have $\mathcal{E}xt^{1}_{X}(\mathcal{Q},\mathcal{O}_{X})\cong\mathcal{O}_{E}(-2)$, that is,  $\Omega^{1}_{X|\mathbb{F}_{n}}\cong\bigoplus_{j=1}^{\delta}\mathcal{O}_{E_{j}}(2E_{j})$.  For the sequence (\ref{secotang}), there is a long exact sequence of sheaves on $\mathbb{F}_{n}$:
   \begin{align*}
      0 &\to \sigma_{*}(\sigma^{*}\Omega^{1}_{\mathbb{F}_{n}}) \rightarrow \sigma_{*}\Omega^{1}_{X} \rightarrow \sigma_{*}\Omega^{1}_{X|\mathbb{F}_{n}} \rightarrow  \\
     &\to R^{1}\sigma_{*}(\sigma^{*}\Omega^{1}_{\mathbb{F}_{n}}) \rightarrow R^{1}\sigma_{*}\Omega^{1}_{X} \rightarrow R^{1}\sigma_{*}\Omega^{1}_{X|\mathbb{F}_{n}} \rightarrow  \\
    &\to R^{2}\sigma_{*}(\sigma^{*}\Omega^{1}_{\mathbb{F}_{n}}) \rightarrow R^{2}\sigma_{*}\Omega^{1}_{X}  \rightarrow R^{2}\sigma_{*}\Omega^{1}_{X|\mathbb{F}_{n}} \rightarrow  0.
    \end{align*}
   By the projection formula, we have
    \begin{equation*}
     R^{i}\sigma_{*}(\sigma^{*}\Omega^{1}_{\mathbb{F}_{n}})\cong \Omega^{1}_{\mathbb{F}_{n}}\otimes R^{i}\sigma_{*}\mathcal{O}_{X}.
    \end{equation*}
   Since $R^{i}\sigma_{*}\mathcal{O}_{X}=0$, for all $i>0$, it follows that
    \begin{equation*}
    R^{i}\sigma_{*}(\sigma^{*}\Omega^{1}_{\mathbb{F}_{n}})=0,
    \end{equation*}
    for all $i>0$. Then $R^{1}\sigma_{*}\Omega^{1}_{X} \cong R^{1}\sigma_{*}\Omega^{1}_{X|\mathbb{F}_{n}}$ and $R^{2}\sigma_{*}\Omega^{1}_{X} \cong R^{2}\sigma_{*}\Omega^{1}_{X|\mathbb{F}_{n}}$. As $\Omega^{1}_{X|\mathbb{F}_{n}}\cong\bigoplus_{j=1}^{\delta}\mathcal{O}_{E_{j}}(2E_{j})$, it follows from the definition of the higher direct image that
  \begin{equation}\label{pushfoward0}
    \begin{split}
        \sigma_{*}\Omega^{1}_{X|\mathbb{F}_{n}}& \cong \bigoplus_{j=1}^{\delta}H^{0}(E_{j},\mathcal{O}_{E_{j}}(2E_{j}) )\otimes\mathcal{O}_{p_{j}} \cong \bigoplus_{j=1}^{\delta}H^{0}(\mathbb{P}^{1},\mathcal{O}_{\mathbb{P}^{1}}(-2) )\otimes\mathcal{O}_{p_{j}}\cong 0,
    \end{split}
    \end{equation}
    \begin{equation} \label{pushfoward1}
    \begin{split} 
         R^{1}\sigma_{*}\Omega^{1}_{X|\mathbb{F}_{n}}& \cong \bigoplus_{j=1}^{\delta}H^{1}(E_{j},\mathcal{O}_{E_{j}}(2E_{j}) )\otimes\mathcal{O}_{p_{j}} \cong \bigoplus_{j=1}^{\delta}H^{1}(\mathbb{P}^{1},\mathcal{O}_{\mathbb{P}^{1}}(-2) )\otimes\mathcal{O}_{p_{j}}\cong \bigoplus_{j=1}^{\delta}\mathcal{O}_{p_{j}},
    \end{split}
    \end{equation}
    \begin{equation}\label{pushfoward2}
    \begin{split} 
      R^{1}\sigma_{*}\Omega^{1}_{X|\mathbb{F}_{n}}& \cong \bigoplus_{j=1}^{\delta}H^{2}(E_{j},\mathcal{O}_{E_{j}}(2E_{j}) )\otimes\mathcal{O}_{p_{j}} \cong \bigoplus_{j=1}^{\delta}H^{2}(\mathbb{P}^{1},\mathcal{O}_{\mathbb{P}^{1}}(-2) )\otimes\mathcal{O}_{p_{j}}\cong 0,
    \end{split}
   \end{equation}
  this implies that $\sigma_{*}\Omega^{1}_{X}\cong \sigma_{*},(\sigma^{*}\Omega^{1}_{\mathbb{F}_{n}})\cong\Omega^{1}_{\mathbb{F}_{n}}$, $R^{1}\sigma_{*}\Omega^{1}_{X}=\mathcal{O}_{Z}$ and $R^{2}\sigma_{*}\Omega^{1}_{X}=0$.\\
  Therefore, from (\ref{secotang}) we have the exact sequence of sheaves on $X$
   \begin{equation}
    \begin{split} \label{secotang2}
        0 \rightarrow \sigma^{*}\Omega^{1}_{\mathbb{F}_{n}}\rightarrow\Omega^{1}_{X}\rightarrow \bigoplus_{j=1}^{\delta}\mathcal{O}_{E_{j}}(2E_{j})\rightarrow 0.
    \end{split}
   \end{equation}
   \subsubsection*{Cohomology of $\Omega^{1}_{X}(2K_{X}+\widetilde{C})$:}
   By tensoring the sequence $(\ref{secotang2})$ by $\mathcal{O}_{X}(2K_{X}+\widetilde{C})$, we obtain
  \begin{equation}
    \begin{split} \label{secotang3}
        0 \rightarrow \sigma^{*}\Omega^{1}_{\mathbb{F}_{n}}(2K_{\mathbb{F}_{n}}+C)\rightarrow\Omega^{1}_{X}(2K_{X}+\widetilde{C})\rightarrow \bigoplus_{j=1}^{\delta}\mathcal{O}_{E_{j}}(2E_{j}+2K_{X}+\widetilde{C})\rightarrow 0,  
    \end{split}
   \end{equation}
    since $2K_{X}+\widetilde{C}=2(\sigma^{*}K_{\mathbb{F}_{n}}+E)+ (\sigma^{*}C-2E)=\sigma^{*}(2K_{\mathbb{F}_{n}}+C)$.\\
    Note that 
    \begin{equation*}
    \deg(\mathcal{O}_{E_{j}}(2E_{j}+2K_{X}+\widetilde{C}))=2E^{2}_{j}+2K_{X}.E_{j}+\widetilde{C}.E_{j}=-2,
     \end{equation*}
     then $H^{0}(E_{j},\mathcal{O}_{E_{j}}(2E_{j}+2K_{X}+\widetilde{C}))=0$ and $H^{1}(E_{j},\mathcal{O}_{E_{j}}(2E_{j}+2K_{X}+\widetilde{C}))\cong \mathbb{C}$, and the exact sequence induced in cohomology of the sequence (\ref{secotang3}) is
     \begin{align*}
        0 &\to H^0(X,\sigma^{*}\Omega^{1}_{\mathbb{F}_{n}}(2K_{\mathbb{F}_{n}}+C)) \rightarrow H^{0}(X,\Omega^{1}_{X}(2K_{X}+\widetilde{C})) \rightarrow 0 \rightarrow  \\
        &\to H^1(X,\sigma^{*}\Omega^{1}_{\mathbb{F}_{n}}(2K_{\mathbb{F}_{n}}+C)) \rightarrow H^{1}(X,\Omega^{1}_{X}(2K_{X}+\widetilde{C})) \rightarrow \mathbb{C}^{\delta} \rightarrow  \\
        &\to H^2(X,\sigma^{*}\Omega^{1}_{\mathbb{F}_{n}}(2K_{\mathbb{F}_{n}}+C)) \rightarrow H^{2}(X,\Omega^{1}_{X}(2K_{X}+\widetilde{C})) \rightarrow 0. 
    \end{align*}
     By the projection formula, we have that $R^{i}\sigma_{*}(\sigma^{*}\Omega^{1}_{\mathbb{F}_{n}}(2K_{\mathbb{F}_{n}}+C))\cong\Omega^{1}_{\mathbb{F}_{n}}(2K_{\mathbb{F}_{n}}+C)\otimes R^{i}\sigma_{*}\mathcal{O}_{X}=0$, for all $i>0$.\\ Then, by (\cite{MR463157}, III 8.1), there are natural isomorphism, for each $i\geq 0$, 
   \begin{equation*}
      \begin{split}
      H^{i}(X,\sigma^{*}\Omega^{1}_{\mathbb{F}_{n}}(2K_{\mathbb{F}_{n}}+C))&\cong H^{i}(\mathbb{F}_{n},\sigma_{*}(\sigma^{*}\Omega^{1}_{\mathbb{F}_{n}}(2K_{\mathbb{F}_{n}}+C))\\
      &\cong H^{i}(\mathbb{F}_{n},\Omega^{1}_{\mathbb{F}_{n}}(2K_{\mathbb{F}_{n}}+C))
    \end{split}
   \end{equation*}
   since $\sigma_{*}(\sigma^{*}\Omega^{1}_{\mathbb{F}_{n}}(2K_{\mathbb{F}_{n}}+C))\cong \Omega^{1}_{\mathbb{F}_{n}}(2K_{\mathbb{F}_{n}}+C)\otimes\sigma_{*}\mathcal{O}_{X}$ by projection formula, and $\sigma_{*}\mathcal{O}_{X}=\mathcal{O}_{\mathbb{F}_{n}}$.\\
   
    \begin{lemma} \label{lemma1}
       If $a\geq5$ and $b\geq(a-2)n+6$, or if $n=0$, $a\geq 5$, $b\geq 5$, then $H^{i}(\mathbb{F}_{n},\Omega^{1}_{\mathbb{F}_{n}}(2K_{\mathbb{F}_{n}}+C))=0$ for $i=1,2$.
   \end{lemma}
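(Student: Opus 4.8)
The plan is to reduce the cohomology of $\Omega^{1}_{\mathbb{F}_{n}}(2K_{\mathbb{F}_{n}}+C)$ to that of two line bundles on $\mathbb{F}_{n}$ by exploiting the ruling $\phi:\mathbb{F}_{n}\to\mathbb{P}^{1}$. Since the fibres of $\phi$ are rational curves, the relative cotangent sheaf is a line bundle (equal to the relative dualizing sheaf), and there is the relative cotangent sequence
\[
0\rightarrow \phi^{*}\Omega^{1}_{\mathbb{P}^{1}}\rightarrow \Omega^{1}_{\mathbb{F}_{n}}\rightarrow \omega_{\mathbb{F}_{n}/\mathbb{P}^{1}}\rightarrow 0,
\]
with $\phi^{*}\Omega^{1}_{\mathbb{P}^{1}}\cong\mathcal{O}_{\mathbb{F}_{n}}(-2F)$ and $\omega_{\mathbb{F}_{n}/\mathbb{P}^{1}}=K_{\mathbb{F}_{n}}+2F\cong\mathcal{O}_{\mathbb{F}_{n}}(-2C_{0}-nF)$. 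Tensoring by $\mathcal{O}_{\mathbb{F}_{n}}(2K_{\mathbb{F}_{n}}+C)$, which is linearly equivalent to $(a-4)C_{0}+(b-4-2n)F$, produces
\[
0\rightarrow A\rightarrow \Omega^{1}_{\mathbb{F}_{n}}(2K_{\mathbb{F}_{n}}+C)\rightarrow B\rightarrow 0,
\]
where $A=\mathcal{O}_{\mathbb{F}_{n}}\big((a-4)C_{0}+(b-6-2n)F\big)$ and $B=\mathcal{O}_{\mathbb{F}_{n}}\big((a-6)C_{0}+(b-4-3n)F\big)$. From the associated long exact sequence it suffices to show $H^{i}(A)=H^{i}(B)=0$ for $i=1,2$.

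First I would record a vanishing criterion for line bundles on $\mathbb{F}_{n}$ via the Leray spectral sequence of $\phi$. For $\mathcal{O}_{\mathbb{F}_{n}}(\alpha C_{0}+\beta F)$ the restriction to a fibre is $\mathcal{O}_{\mathbb{P}^{1}}(\alpha)$. Hence when $\alpha\geq 0$ one has $R^{1}\phi_{*}=0$, so $H^{2}$ vanishes automatically, and $\phi_{*}\mathcal{O}_{\mathbb{F}_{n}}(\alpha C_{0}+\beta F)\cong\bigoplus_{i=0}^{\alpha}\mathcal{O}_{\mathbb{P}^{1}}(\beta-in)$, whence $H^{1}=0$ exactly when $\beta-\alpha n\geq -1$. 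When $\alpha=-1$ the fibre restriction $\mathcal{O}_{\mathbb{P}^{1}}(-1)$ is acyclic, so $R^{0}\phi_{*}=R^{1}\phi_{*}=0$ and \emph{all} cohomology of the line bundle vanishes unconditionally.

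Finally I apply the criterion to $A$ and $B$. The bundle $A$ has $\alpha=a-4\geq 1$, and the criterion gives $H^{2}(A)=0$ together with $H^{1}(A)=0$ once $b\geq(a-2)n+5$, which is implied by the hypothesis $b\geq(a-2)n+6$. The bundle $B$ has $\alpha=a-6$: for $a\geq 6$ one gets $H^{2}(B)=0$ and $H^{1}(B)=0$ under $b\geq(a-3)n+3$, again implied since $(a-2)n+6-\big((a-3)n+3\big)=n+3\geq 0$. The case $n=0$ is identical, the conditions collapsing to $b\geq 5$ for $A$ and $b\geq 3$ for $B$. The one point requiring care is the boundary value $a=5$, where $B$ acquires $\alpha=-1$ and the $\alpha\geq 0$ criterion no longer applies; here one instead invokes the acyclicity of $\mathcal{O}_{\mathbb{P}^{1}}(-1)$ on the fibres to conclude that all cohomology of $B$ vanishes. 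This is precisely what forces the hypothesis $a\geq 5$: for $a\leq 4$ the bundle $B$ would restrict to $\mathcal{O}_{\mathbb{P}^{1}}(\leq -2)$ on the fibres, giving a nonzero $R^{1}\phi_{*}$ and obstructing the vanishing. The main (and only modest) difficulty is thus to handle the two boundary regimes $\alpha=-1$ and $\alpha\geq 0$ consistently rather than any deep obstruction.
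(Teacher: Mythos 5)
Your argument is correct. Note, however, that the paper does not actually prove this lemma: its ``proof'' is the single line ``See \cite{MR1061775}, Lemma 3.6,'' so there is no internal argument to compare against. What you have written is a complete, self-contained proof that supplies the content of that citation. The two ingredients --- the relative cotangent sequence $0\to\phi^{*}\Omega^{1}_{\mathbb{P}^{1}}\to\Omega^{1}_{\mathbb{F}_{n}}\to\omega_{\mathbb{F}_{n}/\mathbb{P}^{1}}\to 0$ twisted by $2K_{\mathbb{F}_{n}}+C\sim(a-4)C_{0}+(b-4-2n)F$, and the Leray computation of $H^{i}(\mathbb{F}_{n},\mathcal{O}(\alpha C_{0}+\beta F))$ via $\phi_{*}\mathcal{O}(\alpha C_{0}+\beta F)\cong\bigoplus_{i=0}^{\alpha}\mathcal{O}_{\mathbb{P}^{1}}(\beta-in)$ --- are the standard route for such vanishings on ruled surfaces and are almost certainly what the cited lemma does. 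Your numerics check out: for $A=\mathcal{O}((a-4)C_{0}+(b-6-2n)F)$ the condition $b\geq(a-2)n+5$ is implied by the hypothesis; for $B=\mathcal{O}((a-6)C_{0}+(b-4-3n)F)$ with $a\geq 6$ one needs $b\geq(a-3)n+3$, again implied; and your treatment of the boundary case $a=5$, where $B$ restricts to $\mathcal{O}_{\mathbb{P}^{1}}(-1)$ on fibres and is therefore acyclic, is exactly the right way to see why $a\geq 5$ is the sharp threshold. The $n=0$ collapse to $b\geq 5$ is also consistent with the statement.
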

   \begin{proof}
       See \cite{MR1061775}, Lemma 3.6.
   \end{proof}
   To relate cohomology on $X$ to cohomology on $\mathbb{F}_{n}$, we analyze the behavior under blow-up.
   \begin{lemma}\label{condiciones1}
     Suppose that $a\geq 5$ and $b\geq (a-2)n+6$, or $a\geq 5$ and $b\geq5$ for $n=0$. Then
      \begin{itemize}
        \item $H^{0}(X,\Omega^{1}_{X}(2K_{X}+\widetilde{C}))\cong H^{0}(\mathbb{F}_{n},\Omega^{1}_{\mathbb{F}_{n}}(2K_{\mathbb{F}_{n}}+C))$,
        \item $H^{1}(X,\Omega^{1}_{X}(2K_{X}+\widetilde{C}))\cong \mathbb{C}^{\delta}$,
        \item $H^{2}(X,\Omega^{1}_{X}(2K_{X}+\widetilde{C}))=0$.
     \end{itemize}
     \end{lemma}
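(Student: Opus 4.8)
The plan is to obtain all three statements directly from the long exact cohomology sequence attached to (\ref{secotang3}), since the geometric content has already been packaged into that sequence. Recall that (\ref{secotang3}) exhibits $\Omega^{1}_{X}(2K_{X}+\widetilde{C})$ as an extension of the exceptional-divisor sheaf $\bigoplus_{j=1}^{\delta}\mathcal{O}_{E_{j}}(2E_{j}+2K_{X}+\widetilde{C})$ by $\sigma^{*}\Omega^{1}_{\mathbb{F}_{n}}(2K_{\mathbb{F}_{n}}+C)$. The preceding computation already records the two facts I need about the outer terms: first, the degree count $\deg\mathcal{O}_{E_{j}}(2E_{j}+2K_{X}+\widetilde{C})=-2$, so that each summand contributes $H^{0}(E_{j},-)=0$ and $H^{1}(E_{j},-)\cong\mathbb{C}$, giving a total of $\mathbb{C}^{\delta}$; and second, the natural identification $H^{i}(X,\sigma^{*}\Omega^{1}_{\mathbb{F}_{n}}(2K_{\mathbb{F}_{n}}+C))\cong H^{i}(\mathbb{F}_{n},\Omega^{1}_{\mathbb{F}_{n}}(2K_{\mathbb{F}_{n}}+C))$ for all $i\geq 0$, which follows from $R^{i}\sigma_{*}\sigma^{*}(-)=0$ for $i>0$ together with $\sigma_{*}\mathcal{O}_{X}=\mathcal{O}_{\mathbb{F}_{n}}$ and the projection formula.

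The key input is Lemma \ref{lemma1}: under precisely the stated hypotheses on $a$ and $b$ (and the separate $n=0$ case), it gives $H^{i}(\mathbb{F}_{n},\Omega^{1}_{\mathbb{F}_{n}}(2K_{\mathbb{F}_{n}}+C))=0$ for $i=1,2$. Transporting this through the identification above, I obtain $H^{1}(X,\sigma^{*}\Omega^{1}_{\mathbb{F}_{n}}(2K_{\mathbb{F}_{n}}+C))=H^{2}(X,\sigma^{*}\Omega^{1}_{\mathbb{F}_{n}}(2K_{\mathbb{F}_{n}}+C))=0$. The only thing to check is that the hypotheses of the present lemma are literally those of Lemma \ref{lemma1}, so that all of its vanishing is available at once; they are, by construction.

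With these vanishings in hand I would read the long exact sequence in three segments. The initial segment $0\to H^{0}(X,\sigma^{*}\Omega^{1}_{\mathbb{F}_{n}}(2K_{\mathbb{F}_{n}}+C))\to H^{0}(X,\Omega^{1}_{X}(2K_{X}+\widetilde{C}))\to 0$, combined once more with the identification on $H^{0}$, yields the first isomorphism. The middle segment collapses to $0\to H^{1}(X,\Omega^{1}_{X}(2K_{X}+\widetilde{C}))\to\mathbb{C}^{\delta}\to 0$ because both neighbouring pullback groups vanish, giving $H^{1}(X,\Omega^{1}_{X}(2K_{X}+\widetilde{C}))\cong\mathbb{C}^{\delta}$. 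The final segment reduces to $0\to H^{2}(X,\Omega^{1}_{X}(2K_{X}+\widetilde{C}))\to 0$, forcing the last vanishing. Because the degree computation and the long exact sequence are already established in the text and Lemma \ref{lemma1} is quoted from \cite{MR1061775}, there is no genuine obstacle remaining: the entire argument is bookkeeping on the long exact sequence, and the one point requiring care is simply matching the numerical hypotheses so that Lemma \ref{lemma1} applies.
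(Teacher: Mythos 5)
Your proposal is correct and is essentially the paper's own argument: the paper proves this lemma by exactly the same bookkeeping on the long exact cohomology sequence of (\ref{secotang3}), using the degree $-2$ computation on the exceptional components together with the identification $H^{i}(X,\sigma^{*}\Omega^{1}_{\mathbb{F}_{n}}(2K_{\mathbb{F}_{n}}+C))\cong H^{i}(\mathbb{F}_{n},\Omega^{1}_{\mathbb{F}_{n}}(2K_{\mathbb{F}_{n}}+C))$ and the vanishing from Lemma \ref{lemma1}. Your write-up just makes explicit the three segments of the sequence that the paper leaves implicit.
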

     \begin{proof}
         This follows from the exact sequence induced in cohomology of the sequence (\ref{secotang3}) and by Lemma \ref{lemma1}.
     \end{proof}
     \subsubsection*{Cohomology of $\Omega^{1}_{X}(2K_{X}+2\widetilde{C})$:}
     Now, by tensoring the sequence (\ref{secotang2}) by $\mathcal{O}_{X}(2K_{X}+2\widetilde{C})$, we obtain 
     \begin{equation}
     \begin{split} \label{secotang4}
        0 \rightarrow \sigma^{*}\Omega^{1}_{\mathbb{F}_{n}}(2K_{\mathbb{F}_{n}}+2C)\otimes\mathcal{O}_{X}(-2E)\rightarrow\Omega^{1}_{X}(2K_{X}+2\widetilde{C})\rightarrow \bigoplus_{j=1}^{\delta}\mathcal{O}_{E_{j}}(2E_{j}+2K_{X}+2\widetilde{C})\rightarrow 0,  
    \end{split}
    \end{equation}
     since $2K_{X}+2\widetilde{C}=2(\sigma^{*}K_{\mathbb{F}_{n}}+E)+ 2(\sigma^{*}C-2E)=\sigma^{*}(2K_{\mathbb{F}_{n}}+2C)-2E$.\\
     Note that 
     \begin{equation*}
       \deg (\mathcal{O}_{E_{j}}(2E_{j}+2K_{X}+2\widetilde{C}))=2E_{j}^{2}+2K_{X}.E_{j}+2\widetilde{C}.E_{j}=0,
     \end{equation*}
       thus $H^{0}(E_{j},\mathcal{O}_{E_{j}}(2E_{j}+2K_{X}+2\widetilde{C}))\cong \mathbb{C}$ and $H^{1}(E_{j},\mathcal{O}_{E_{j}}(2E_{j}+2K_{X}+2\widetilde{C}))=0$, and the sequence induced in cohomology of (\ref{secotang4}) is
      \begin{align*}
          0 &\to H^0(X,\sigma^{*}\Omega^{1}_{\mathbb{F}_{n}}(2K_{\mathbb{F}_{n}}+2C)\otimes \mathcal{O}_{X}(-2E)) \rightarrow H^{0}(X,\Omega^{1}_{X}(2K_{X}+2\widetilde{C})) \rightarrow \mathbb{C}^{\delta} \rightarrow  \\
         &\to H^1(X,\sigma^{*}\Omega^{1}_{\mathbb{F}_{n}}(2K_{\mathbb{F}_{n}}+2C)\otimes \mathcal{O}_{X}(-2E)) \rightarrow H^{1}(X,\Omega^{1}_{X}(2K_{X}+2\widetilde{C})) \rightarrow 0 \rightarrow  \\
        &\to H^2(X,\sigma^{*}\Omega^{1}_{\mathbb{F}_{n}}(2K_{\mathbb{F}_{n}}+2C)\otimes \mathcal{O}_{X}(-2E)) \rightarrow H^{2}(X,\Omega^{1}_{X}(2K_{X}+2\widetilde{C})) \rightarrow 0. 
     \end{align*}
     Again, by the projection formula 
     \begin{equation*}
         R^{i}\sigma_{*}(\sigma^{*}\Omega^{1}_{\mathbb{F}_{n}}(2K_{\mathbb{F}_{n}}+2C)\otimes\mathcal{O}_{X}(-2E))\cong\Omega^{1}_{\mathbb{F}_{n}}(2K_{\mathbb{F}_{n}}+C)\otimes R^{i}\sigma_{*}\mathcal{O}_{X}(-2E)=0
     \end{equation*}
     since $R^{i}\sigma_{*}\mathcal{O}_{X}(-2E)=0$, for all $i>0$. Therefore, there are natural isomorphism, for each $i\geq0$,
    \begin{equation*}
       \begin{split}
           H^{i}(X,\sigma^{*}\Omega^{1}_{\mathbb{F}_{n}}(2K_{\mathbb{F}_{n}}+2C)\otimes\mathcal{O}_{X}(-2E))&\cong H^{i}(\mathbb{F}_{n},\sigma_{*}(\sigma^{*}\Omega^{1}_{\mathbb{F}_{n}}(2K_{\mathbb{F}_{n}}+2C)\otimes\mathcal{O}_{X}(-2E)))\\
          &\cong H^{i}(\mathbb{F}_{n},\Omega^{1}_{\mathbb{F}_{n}}(2K_{\mathbb{F}_{n}}+2C)\otimes \mathcal{I}^{2}_{Z})
      \end{split}
     \end{equation*}
     since $\sigma_{*}(\sigma^{*}\Omega^{1}_{\mathbb{F}_{n}}(2K_{\mathbb{F}_{n}}+2C)\otimes\mathcal{O}_{X}(-2E))\cong\Omega^{1}_{\mathbb{F}_{n}}(2K_{\mathbb{F}_{n}}+2C)\otimes\sigma_{*}\mathcal{O}_{X}(-2E)$ and $\sigma_{*}\mathcal{O}_{X}(-2E)=\mathcal{I}^{2}_{Z}$, where $\mathcal{I}_{Z}$ is the ideal sheaf of $Z$ on $\mathbb{F}_{n}$ and $\mathcal{I}^{2}_{Z}$ is the product $\mathcal{I}_{Z}\cdot \mathcal{I}_{Z}$.\\
     Let $\mathcal{F}=\Omega^{1}_{\mathbb{F}_{n}}(2K_{\mathbb{F}_{n}}+2C)$, we use the exact sequence
        \begin{equation}
           0\rightarrow \mathcal{I}^{2}_{Z}\rightarrow \mathcal{O}_{\mathbb{F}_{n}}\rightarrow \mathcal{O}_{\mathbb{F}_{n}}/\mathcal{I}^{2}_{Z}\rightarrow 0.
        \end{equation}
      By tensoring this sequence with $\mathcal{F}$, we have the long exact sequence in cohomology
       \begin{align*}
          0 &\to H^{0}(\mathbb{F}_{n},\mathcal{F}\otimes \mathcal{I}^{2}_{Z}) \rightarrow H^{0}(\mathbb{F}_{n},\mathcal{F}) \xrightarrow{h} H^0(\mathbb{F}_{n},\mathcal{F}\otimes \left(\mathcal{O}_{\mathbb{F}_{n}}/\mathcal{I}^{2}_{Z}\right)) \rightarrow  \\
          &\to H^{1}(\mathbb{F}_{n},\mathcal{F}\otimes \mathcal{I}^{2}_{Z}) \rightarrow H^{1}(\mathbb{F}_{n},\mathcal{F}) \rightarrow 0 \rightarrow  \\
          &\to H^{2}(\mathbb{F}_{n},\mathcal{F}\otimes \mathcal{I}^{2}_{Z}) \rightarrow H^{2}(\mathbb{F}_{n},\mathcal{F}) \rightarrow 0. 
      \end{align*}
     \begin{lemma}\label{lemma3}
         Suppose that $a\geq 3$ and $b\geq (a-1)n+3$, then $H^{i}(\mathbb{F}_{n},\Omega^{1}_{\mathbb{F}_{n}}(2K_{\mathbb{F}_{n}}+2C))=0$ for $i=1,2$.
      \end{lemma}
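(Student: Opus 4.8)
The plan is to reduce the vanishing of $H^{i}(\mathbb{F}_{n},\Omega^{1}_{\mathbb{F}_{n}}(2K_{\mathbb{F}_{n}}+2C))$ to the cohomology of two line bundles on $\mathbb{F}_{n}$ by means of the relative cotangent sequence of the ruling $\phi:\mathbb{F}_{n}\rightarrow\mathbb{P}^{1}$. Since $\phi$ is smooth, I have the exact sequence
\[
0\rightarrow \phi^{*}\Omega^{1}_{\mathbb{P}^{1}}\rightarrow \Omega^{1}_{\mathbb{F}_{n}}\rightarrow \Omega^{1}_{\mathbb{F}_{n}/\mathbb{P}^{1}}\rightarrow 0,
\]
with $\phi^{*}\Omega^{1}_{\mathbb{P}^{1}}\cong\mathcal{O}_{\mathbb{F}_{n}}(-2F)$ and, from $K_{\mathbb{F}_{n}/\mathbb{P}^{1}}=K_{\mathbb{F}_{n}}-\phi^{*}K_{\mathbb{P}^{1}}=-2C_{0}-nF$, the identification $\Omega^{1}_{\mathbb{F}_{n}/\mathbb{P}^{1}}\cong\mathcal{O}_{\mathbb{F}_{n}}(-2C_{0}-nF)$. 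Tensoring by $\mathcal{O}_{\mathbb{F}_{n}}(2K_{\mathbb{F}_{n}}+2C)$ (with $2K_{\mathbb{F}_{n}}+2C\sim(2a-4)C_{0}+(2b-2n-4)F$) produces a short exact sequence whose outer terms are
\[
A:=\mathcal{O}_{\mathbb{F}_{n}}\big((2a-4)C_{0}+(2b-2n-6)F\big),\qquad B:=\mathcal{O}_{\mathbb{F}_{n}}\big((2a-6)C_{0}+(2b-3n-4)F\big).
\]
The associated long exact sequence then reduces the statement to proving $H^{i}(A)=H^{i}(B)=0$ for $i=1,2$.

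For the line-bundle vanishing I would push forward along $\phi$. When $\alpha\geq 0$ one has $\phi_{*}\mathcal{O}_{\mathbb{F}_{n}}(\alpha C_{0}+\beta F)\cong\bigoplus_{k=0}^{\alpha}\mathcal{O}_{\mathbb{P}^{1}}(\beta-kn)$ and $R^{1}\phi_{*}=0$ (the restriction to each fibre has nonnegative degree), so the Leray spectral sequence degenerates and gives $H^{i}(\mathbb{F}_{n},\mathcal{O}(\alpha C_{0}+\beta F))\cong\bigoplus_{k=0}^{\alpha}H^{i}(\mathbb{P}^{1},\mathcal{O}(\beta-kn))$. Because $\mathbb{P}^{1}$ is a curve, the $i=2$ groups vanish automatically once $\alpha\geq 0$; under $a\geq 3$ the bundles $A$ and $B$ have $\alpha=2a-4\geq 2$ and $\alpha=2a-6\geq 0$ respectively, so $H^{2}(A)=H^{2}(B)=0$ is immediate. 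For $H^{1}$ the criterion is that the worst summand ($k=\alpha$) be acyclic, i.e. $\beta-\alpha n\geq -1$.

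Applying this to $A$ gives $2b-2n-6\geq(2a-4)n-1$, that is $b\geq(a-1)n+\tfrac52$, which for integral $b$ is exactly the hypothesis $b\geq(a-1)n+3$. Applying it to $B$ gives $2b-3n-4\geq(2a-6)n-1$, i.e. $b\geq(a-\tfrac32)n+\tfrac32$; since $(a-1)n+3-\big((a-\tfrac32)n+\tfrac32\big)=\tfrac{n}{2}+\tfrac32\geq 0$, this is automatically implied. Thus the binding constraint comes entirely from the sub-line-bundle $A$, while the quotient $B$ imposes a strictly weaker condition, and the two vanishings together yield $H^{i}(\mathbb{F}_{n},\Omega^{1}_{\mathbb{F}_{n}}(2K_{\mathbb{F}_{n}}+2C))=0$ for $i=1,2$. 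There is no genuine obstacle beyond careful bookkeeping of the two numerical inequalities; the only point requiring attention is the correct identification of the relative cotangent sheaf, equivalently fixing the convention for $\phi_{*}\mathcal{O}(\alpha C_{0})$ so that the twists $-kn$ carry the right sign. This argument parallels that of Lemma \ref{lemma1}, now with $C$ replaced by $2C$.
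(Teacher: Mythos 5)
Your argument is correct, and it is genuinely different from the paper's: the paper disposes of this lemma with a bare citation to Lemma~3.6 of \cite{MR1061775}, whereas you give a complete, self-contained proof. Your route --- the relative cotangent sequence of the ruling $\phi:\mathbb{F}_{n}\to\mathbb{P}^{1}$, the identifications $\phi^{*}\Omega^{1}_{\mathbb{P}^{1}}\cong\mathcal{O}_{\mathbb{F}_{n}}(-2F)$ and $\Omega^{1}_{\mathbb{F}_{n}/\mathbb{P}^{1}}\cong\mathcal{O}_{\mathbb{F}_{n}}(-2C_{0}-nF)$, and then the Leray computation $H^{i}(\mathbb{F}_{n},\mathcal{O}(\alpha C_{0}+\beta F))\cong\bigoplus_{k=0}^{\alpha}H^{i}(\mathbb{P}^{1},\mathcal{O}(\beta-kn))$ for $\alpha\geq 0$ --- is standard and all the numerics check out: the sub-bundle $A$ forces $b\geq(a-1)n+\tfrac52$, equivalent to $b\geq(a-1)n+3$ for integers, and the quotient $B$ forces only the weaker $b\geq(a-\tfrac32)n+\tfrac32$, while the $H^{2}$ vanishings and the case $\alpha=2a-6=0$ at $a=3$ are handled correctly. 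What your approach buys is that the lemma (and, by the same computation with $C$ in place of $2C$, Lemma~\ref{lemma1} as well) no longer depends on an external reference, and it makes visible exactly which summand of the pushforward is responsible for the numerical hypothesis; the cost is only the page of bookkeeping you carried out. One small presentational caution: you reuse the letters $A$ and $B$ for your two line bundles, which collide with the divisors $A$ and $B$ defined in Section~\ref{sec:surjectivity}, so rename them if this is to be inserted into the paper.
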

      \begin{proof}
           See \cite{MR1061775}, Lemma 3.6.
      \end{proof}
    \begin{remark} \label{remarkvanishing}
      If the conditions of Lemma \ref{lemma3} are satisfied and $h$ is surjective, then
        \begin{equation*}
        H^{1}(\mathbb{F}_{n},\mathcal{F}\otimes \mathcal{I}^{2}_{Z})\cong H^{1}(X,\sigma^{*}\Omega^{1}_{\mathbb{F}_{n}}(2K_{\mathbb{F}_{n}}+2C)\otimes \mathcal{O}_{X}(-2E))=0.
      \end{equation*}
   \end{remark}
   To analyze the surjectivity of the morphism $h$, we first establish the relationship between the ideal sheaf of $Z$ and a tensor product of maximal ideal sheaves. Then we introduce the definitions of \textit{k-jet spanned} at a point $p$ and \textit{k-jet ample} at $Z$.
     \begin{lemma}\label{lemma2}
    Suppose that $Z=\{p_{1}\cdots, p_{\delta}\}$ is a zero-dimensional subscheme of $\mathbb{F}_{n}$ formed by $\delta$ distinct reduced closed points. For each $p_{j}$, let $\mathfrak{m}_{p_{j}}$ be the maximal ideal sheaf of $p_{j}$ in $\mathbb{F}_{n}$, i.e., the stalks of $\mathfrak{m}_{p_{j}}$ at a point $y\neq p_{j}$ is $\mathcal{O}_{\mathbb{F}_{n},y}$ and at $p_{j}$ is the maximal ideal  $\mathfrak{m}_{p_{j}}\mathcal{O}_{\mathbb{F}_{n},p_{j}}\subset\mathcal{O}_{\mathbb{F}_{n},p_{j}}$. Then 
    \begin{equation*}
        \mathcal{I}_{Z}\cong \bigotimes_{j=1}^{\delta} \mathfrak{m}_{p_{j}}.
    \end{equation*}
    Similarly, $\mathcal{I}^{2}_{Z}\cong \bigotimes_{j=1}^{\delta} \mathfrak{m}^{2}_{p_{j}}$.
      \end{lemma}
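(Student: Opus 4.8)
The assertion is local on $\mathbb{F}_{n}$, and since the formation of the tensor product of sheaves of $\mathcal{O}_{\mathbb{F}_{n}}$-modules commutes with passage to stalks, it suffices to verify the isomorphism on each stalk. The plan is to produce the natural multiplication map and show that it is injective stalk by stalk, using crucially that the $p_{j}$ are pairwise distinct.

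First I would introduce the multiplication map $\mu\colon \bigotimes_{j=1}^{\delta}\mathfrak{m}_{p_{j}}\to\mathcal{O}_{\mathbb{F}_{n}}$, sending $s_{1}\otimes\cdots\otimes s_{\delta}$ to the product $s_{1}\cdots s_{\delta}$, whose image is by definition the product ideal sheaf $\prod_{j=1}^{\delta}\mathfrak{m}_{p_{j}}$. Now fix a point $y\in\mathbb{F}_{n}$. Because the $p_{j}$ are distinct, $y$ can coincide with at most one of them, so on passing to stalks at most one factor $(\mathfrak{m}_{p_{j}})_{y}$ is the proper ideal $\mathfrak{m}_{y}\subset\mathcal{O}_{\mathbb{F}_{n},y}$, while every other factor equals $\mathcal{O}_{\mathbb{F}_{n},y}$. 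Since tensoring with the full structure sheaf changes nothing, the stalk $\mu_{y}$ is then the identity $\mathcal{O}_{y}\to\mathcal{O}_{y}$ when $y\notin Z$, and the inclusion $\mathfrak{m}_{y}\hookrightarrow\mathcal{O}_{y}$ when $y=p_{i}$. In either case $\mu_{y}$ is injective, so $\mu$ is injective and identifies $\bigotimes_{j}\mathfrak{m}_{p_{j}}$ with the product ideal sheaf $\prod_{j}\mathfrak{m}_{p_{j}}$.

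It then remains to identify this product with $\mathcal{I}_{Z}$, which I would again do by comparing stalks: when $y=p_{i}$ one has $(\prod_{j}\mathfrak{m}_{p_{j}})_{y}=\mathfrak{m}_{y}=(\mathcal{I}_{Z})_{y}$ since $Z$ is reduced, and both stalks equal $\mathcal{O}_{y}$ otherwise. This yields $\mathcal{I}_{Z}\cong\bigotimes_{j}\mathfrak{m}_{p_{j}}$. For the second isomorphism I would run the identical argument with $\mathfrak{m}_{p_{j}}^{2}$ in place of $\mathfrak{m}_{p_{j}}$: distinctness again forces at most one nontrivial factor per stalk, giving $\bigotimes_{j}\mathfrak{m}_{p_{j}}^{2}\cong\prod_{j}\mathfrak{m}_{p_{j}}^{2}$, and a final stalk comparison identifies $\prod_{j}\mathfrak{m}_{p_{j}}^{2}$ with $\mathcal{I}_{Z}^{2}$, since at each $p_{i}$ both stalks equal $\mathfrak{m}_{y}^{2}$.

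The only genuine subtlety, and the step I expect to be the main obstacle, is the injectivity of $\mu$: tensor products of ideal sheaves can fail to be torsion-free (there is in general a nonvanishing $Tor^{1}$ contribution), so the natural multiplication map need not be injective. The hypothesis that the $p_{j}$ are distinct is exactly what removes this difficulty, because it guarantees that at every point at most one tensor factor is a proper ideal, collapsing the local picture to a single ideal and thereby trivializing all higher Tor. This is the one place where the distinctness assumption is used in an essential way.
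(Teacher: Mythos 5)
Your proof is correct and follows essentially the same stalkwise strategy as the paper, which likewise reduces everything to the observation that, the $p_{j}$ being distinct and reduced, at each point at most one factor contributes a nontrivial (maximal) ideal while $\mathcal{I}_{Z}$ has exactly that stalk as the kernel of $\mathcal{O}_{Y}\to i_{*}\mathcal{O}_{Z}$. Your version is in fact slightly more complete, since you exhibit the multiplication morphism $\mu$ and explicitly rule out the $Tor$ obstruction rather than only comparing stalks.
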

      \begin{proof}
    This demonstration is more general. Let $Y$ a smooth algebraic variety of dimension two. For each point $x$ on $Y$ let $\mathfrak{m}_{x}$ be the maximal ideal sheaf of $x$ in $Y$, i.e., the stalk of $\mathfrak{m}_{x}$ at a point $y\neq x$ is $\mathcal{O}_{Y,y}$ and at $x$ is the maximal ideal $\mathfrak{m}_{x}\mathcal{O}_{Y,x}\subset \mathcal{O}_{Y,x}$.\\
    Let $Z:=\{p_{1},\cdots,p_{\delta}\}$ be $\delta$ distinct reduced closed points, and consider the inclusion $i:= Z\hookrightarrow Y$ and the morphism $i^{\#}:\mathcal{O}_{Y}\rightarrow i_{*}\mathcal{O}_{Z}$. Note that $\mathcal{I}_{Z}=\ker i^{\#}$. \\
    Observe that, for every $x\in Y$, we have
    \begin{equation*}
        i^{\#}_{x}=\left\{ \begin{array}{lcc} i^{\#}_{x}:\mathcal{O}_{Y,x}\rightarrow0 & if & x\notin Z, \\  \\ i^{\#}_{x}:\mathcal{O}_{Y,x}\rightarrow (i_{*}\mathcal{O}_{Z})_{x} & if & x\in Z. \end{array} \right.
    \end{equation*}
    For the case where $x\in Z$, we have that $\mathcal{O}_{Y,x}$ is a local ring of Krull dimension two, while $\mathcal{O}_{Z,x}$ is a local ring of Krull dimension zero. Since $Z$ is reduced subscheme, $\mathcal{O}_{Z,x}$ has no nilpotent elements. Furthermore, every element on the maximal ideal of $\mathcal{O}_{Z,x}$ is nilpotent which implies that the maximal ideal is zero. Therefore, as $\ker i^{\#}_{x}$
    is precisely the maximal ideal of $\mathcal{O}_{Y,x}$.\\
    Therefore the stalks of $\mathcal{I}_{Z}$ coincides with the stalks of $\mathfrak{m}_{p_{1}}\otimes\mathfrak{m}_{p_{2}}\otimes \cdots\otimes \mathfrak{m}_{p_{\delta}}$, that is, at a point $y\neq p_{j}$ is $\mathcal{O}_{Y,y}$ and at $p_{j}$ is the maximal ideal $\mathfrak{m}_{p_{j}}\mathcal{O}_{Y,p_{j}}\subset \mathcal{O}_{Y,p_{j}}$. \\
      \end{proof}
   \begin{definition}
   A vector bundle $\mathcal{E}$ on $\mathbb{F}_{n}$ is \textit{k- jet spanned} at $p$ if the evaluation map
   \begin{equation*}
          X\times H^{0}(\mathbb{F}_{n},\mathcal{E})\rightarrow H^{0}(\mathbb{F}_{n},\mathcal{E}\otimes(\mathcal{O}_{\mathbb{F}_{n}}/\mathfrak{m}_{p_{j}}^{k+1}))
   \end{equation*}
   is surjective. We say that a vector bundle $\mathcal{E}$ on $\mathbb{F}_{n}$ is a \textit{k-jet ample} at $Z$ if for every $\delta-$tuple $(k_{1},\cdots,k_{\delta})$ of positive integers such that $\sum_{i=1}^{\delta}k_{i}=k+1$, the evaluation map
     \begin{equation*}
        X\times H^{0}(\mathbb{F}_{n},\mathcal{E})\rightarrow H^{0}(\mathbb{F}_{n},\mathcal{E}\otimes(\mathcal{O}_{\mathbb{F}_{n}}/\otimes_{i=1}^{\delta}\mathfrak{m}_{p_{j}}^{k_{i}}))
      \end{equation*}
    is surjective.
    \end{definition}
    These definitions are given in a more general context, namely for vector bundles on a smooth variety; for further details, see \cite{MR1698897}. Hence in particular $k-$ jet ample implies $k-$ jet spanned. Note that $\mathcal{E}$ is 0-jet ample if only if $\mathcal{E}$ is 0-jet spanned, if only if $\mathcal{E}$ is spanned by its global sections. Moreover, for a line bundle $L$,  $L$ is 1-jet spanned if only if $L$ is base-point-free; and $L$ is 1-jet ample if only if $L$ is very ample. \\

    For the case of $\delta=1$, the morphism $h$ is $H^{0}(\mathbb{F}_{n},\mathcal{F})\rightarrow H^{0}(\mathbb{F}_{n},\mathcal{F}\otimes \left(\mathcal{O}_{\mathbb{F}_{n}}/\mathfrak{m}_{p}^{2}\right))$ according to the Lemma \ref{lemma2}. The next lemma guarantees that $h$ is surjective in the case where $\delta=1$.
    \begin{lemma}\label{lemma1nodo}
        Let $p\in \mathbb{F}_{n}$. Suppose that $a\geq 3$ and $b\geq (a-1)n+2$ then $\mathcal{F}=\Omega^{1}_{\mathbb{F}_{n}}(2K_{\mathbb{F}_{n}}+2C)$ is a $1-$jet spanned at $p$.
    \end{lemma}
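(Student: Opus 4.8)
The plan is to translate $1$-jet spannedness of the rank-two bundle $\mathcal F=\Omega^1_{\mathbb F_n}(2K_{\mathbb F_n}+2C)$ at $p$ into a single cohomology vanishing statement, and then to peel $\mathcal F$ apart into line bundles using the relative cotangent sequence of the ruling $\phi\colon\mathbb F_n\to\mathbb P^1$.

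First I would set $L:=\mathcal O_{\mathbb F_n}(2K_{\mathbb F_n}+2C)=\mathcal O_{\mathbb F_n}((2a-4)C_0+(2b-2n-4)F)$ and record that, by definition, $\mathcal F$ is $1$-jet spanned at $p$ exactly when the evaluation $h\colon H^0(\mathbb F_n,\mathcal F)\to H^0(\mathbb F_n,\mathcal F\otimes(\mathcal O_{\mathbb F_n}/\mathfrak m_p^2))$ is surjective. Tensoring $0\to\mathfrak m_p^2\to\mathcal O_{\mathbb F_n}\to\mathcal O_{\mathbb F_n}/\mathfrak m_p^2\to 0$ with $\mathcal F$ and using the vanishing $H^1(\mathbb F_n,\mathcal F)=0$ already available for this sheaf (Lemma \ref{lemma3}), the cokernel of $h$ is identified with $H^1(\mathbb F_n,\mathcal F\otimes\mathfrak m_p^2)$. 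Thus the lemma reduces to the single vanishing $H^1(\mathbb F_n,\mathcal F\otimes\mathfrak m_p^2)=0$.

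Next I would use the relative cotangent sequence $0\to\phi^*\Omega^1_{\mathbb P^1}\to\Omega^1_{\mathbb F_n}\to\omega_{\mathbb F_n/\mathbb P^1}\to 0$, i.e. $0\to\mathcal O(-2F)\to\Omega^1_{\mathbb F_n}\to\mathcal O(-2C_0-nF)\to 0$. Twisting by $L$ yields $0\to M_1\to\mathcal F\to M_2\to 0$ with $M_1=\mathcal O((2a-4)C_0+(2b-2n-6)F)$ and $M_2=\mathcal O((2a-6)C_0+(2b-3n-4)F)$; since $M_2$ is locally free the sequence stays exact after $\otimes\,\mathfrak m_p^2$. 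From the associated long exact sequence it then suffices to show $H^1(M_i\otimes\mathfrak m_p^2)=0$ for $i=1,2$, that is, that each line bundle $M_i$ is $1$-jet spanned at $p$. For this I would invoke the numerical criterion for $k$-jet ampleness of line bundles on $\mathbb F_n$ from \cite{MR1698897} (a divisor $\alpha C_0+\beta F$ is $1$-jet ample, hence $1$-jet spanned at every point, as soon as $\alpha\ge 1$ and $\beta-\alpha n\ge 1$) and verify these inequalities for $M_1$ and $M_2$ directly from the hypotheses on $a$ and $b$.

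The main obstacle is precisely the positivity of the quotient $M_2=\mathcal O((2a-6)C_0+(2b-3n-4)F)$: its $C_0$-degree is only $2a-6$, so for small $a$ this factor degenerates toward a fibre pullback $\phi^*\mathcal O_{\mathbb P^1}(\,\cdot\,)$, whose sections are constant along the fibre through $p$ and therefore cannot separate the fibre-direction part of the $1$-jet. The whole argument hinges on the numerical hypotheses forcing both $2a-6\ge 1$ and the slope inequality $(2b-3n-4)-(2a-6)n\ge 1$ (together with the analogous pair for $M_1$); checking these, and—at borderline values where $M_2$ is not itself $1$-jet spanned—replacing the crude reduction by an analysis of the connecting homomorphism $H^0(M_2\otimes\mathfrak m_p^2)\to H^1(M_1\otimes\mathfrak m_p^2)$ that exploits the non-split extension class of $\Omega^1_{\mathbb F_n}$ for $n>0$, is the delicate heart of the proof.
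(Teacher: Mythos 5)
Your opening reduction (surjectivity of the evaluation map follows from $H^{1}(\mathbb{F}_{n},\mathcal{F}\otimes\mathfrak{m}_{p}^{2})=0$) is fine, and splitting $\mathcal{F}$ along the relative cotangent sequence of $\phi$ is a legitimate idea, but the numerical step where the proof is supposed to close has a genuine gap: under the stated hypotheses $a\geq 3$, $b\geq (a-1)n+2$, the line bundles $M_{1}$ and $M_{2}$ simply are not $1$-jet spanned on a substantial part of the allowed range, not merely at isolated borderline values. Concretely, $M_{1}\cdot C_{0}=(2b-2n-6)-(2a-4)n=2\bigl(b-(a-1)n-3\bigr)$, so the slope inequality you need for $M_{1}$ forces $b\geq (a-1)n+4$; for $b=(a-1)n+2$ the bundle $M_{1}$ is not even nef ($C_{0}$ is a fixed component), and for $b=(a-1)n+3$ it is nef but not very ample. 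Likewise $M_{2}=(2a-6)C_{0}+(2b-3n-4)F$ is a pullback $\phi^{*}\mathcal{O}_{\mathbb{P}^{1}}(\cdot)$ for $a=3$, hence never $1$-jet spanned. The rescue you gesture at --- analyzing the connecting map $H^{0}(M_{2}\otimes\mathfrak{m}_{p}^{2})\to H^{1}(M_{1}\otimes\mathfrak{m}_{p}^{2})$ via the extension class of $\Omega^{1}_{\mathbb{F}_{n}}$ --- is not carried out, and it is structurally unavailable for $n=0$, where $\Omega^{1}_{\mathbb{F}_{0}}$ splits and the extension class vanishes, so $H^{1}(\mathcal{F}\otimes\mathfrak{m}_{p}^{2})$ really does contain $H^{1}(M_{2}\otimes\mathfrak{m}_{p}^{2})$ as a direct summand. (A smaller point: your identification of $\operatorname{coker}(h)$ with $H^{1}(\mathcal{F}\otimes\mathfrak{m}_{p}^{2})$ uses $H^{1}(\mathbb{F}_{n},\mathcal{F})=0$, which Lemma \ref{lemma3} provides only for $b\geq (a-1)n+3$, one more than assumed here; for the sufficiency direction you need only the vanishing of $H^{1}(\mathcal{F}\otimes\mathfrak{m}_{p}^{2})$, so state it that way.)

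The paper avoids this trap by not filtering $\mathcal{F}$ into line bundles at all: it writes $\mathcal{F}=(\mathcal{F}\otimes\mathcal{L}^{-1})\otimes\mathcal{L}$ with $\mathcal{L}=\mathcal{O}_{\mathbb{F}_{n}}(C_{0}+nF)$, checks that $\mathcal{L}$ is $1$-jet spanned and that the rank-two twist $\mathcal{F}\otimes\mathcal{L}^{-1}=\Omega^{1}_{\mathbb{F}_{n}}((2a-5)C_{0}+(2b-3n-4)F)$ is globally generated, and invokes the product theorem of \cite{MR1698897} (a $0$-jet spanned bundle tensored with a $1$-jet spanned line bundle is $1$-jet spanned). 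Demanding only global generation of the rank-two piece, rather than $1$-jet spannedness of each graded piece of a filtration, is exactly what makes the weaker hypotheses on $a$ and $b$ suffice. If you want to salvage your route, you must either strengthen the hypotheses to $a\geq 4$ and $b\geq (a-1)n+4$, or replace the appeal to $1$-jet ampleness of $M_{1},M_{2}$ by an argument of the paper's multiplicative type.
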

    \begin{proof}
        Consider $\mathcal{L}=\mathcal{O}_{\mathbb{F}_{n}}(C_{0}+ nF)$ a line bundle on $\mathbb{F}_{n}$. Observe that $\mathcal{L}$ is 1-jet spanned since $\mathcal{L}$ is base-point-free (recall that a line bundle $\mathcal{O}_{\mathbb{F}_{n}}(r_{1}C_{0}+ r_{2}F)$ on $\mathbb{F}_{n}$ is base-point-free if only if $r_{2}\geq r_{1}\,\cdot n$). Note that $\mathcal{F}\otimes \mathcal{L}^{-1}=\Omega^{1}_{\mathbb{F}_{n}}((2a-5)C_{0}+(2b-4-3n)F)$ is generated globally if $2a-5\geq 0$ and $2b-4-3n\geq (2a-5)n$. Thus, with the conditions, we see that $\mathcal{F}\otimes\mathcal{L}^{-1}$ is 0-jet spanned and $\mathcal{L}$ is $1-$jet spanned. Therefore, by (\cite{MR1698897}, 2.3), $\mathcal{F}=\mathcal{F}\otimes\mathcal{L}^{-1}\otimes\mathcal{L}$ is $1-$jet spanned.
    \end{proof}
    In general, we have 
    \begin{lemma}\label{lemmadeltanodos}
         Suppose that $a\geq \delta +2$ and $b\geq(a-1)n+\delta+2$. Then $\mathcal{F}$ is a ($2\delta-1$)-jet ample at $Z$. In particular, $h$ is surjective.
    \end{lemma}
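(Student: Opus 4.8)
The plan is to deduce the surjectivity of $h$ from the stronger assertion that $\mathcal{F}=\Omega^{1}_{\mathbb{F}_{n}}(2K_{\mathbb{F}_{n}}+2C)$ is $(2\delta-1)$-jet ample at $Z$, and to establish the latter by propagating jet ampleness through the relative cotangent sequence of the ruling $\phi:\mathbb{F}_{n}\to\mathbb{P}^{1}$, exactly in the spirit of the tensor-product argument used for $\delta=1$ in Lemma \ref{lemma1nodo}. Granting the jet-ampleness statement, the surjectivity of $h$ is immediate: in the definition of $(2\delta-1)$-jet ample we may take the particular $\delta$-tuple $(2,\dots,2)$, whose entries sum to $2\delta=(2\delta-1)+1$, and obtain surjectivity of the evaluation map $H^{0}(\mathbb{F}_{n},\mathcal{F})\to H^{0}(\mathbb{F}_{n},\mathcal{F}\otimes(\mathcal{O}_{\mathbb{F}_{n}}/\otimes_{j}\mathfrak{m}_{p_{j}}^{2}))$. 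By Lemma \ref{lemma2} the ideal $\otimes_{j}\mathfrak{m}_{p_{j}}^{2}$ equals $\mathcal{I}_{Z}^{2}$, so this evaluation map is precisely $h$.

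To set up the propagation I would first record the twisted relative cotangent sequence. Since $\Omega^{1}_{\mathbb{F}_{n}}$ fits in $0\to\mathcal{O}_{\mathbb{F}_{n}}(-2F)\to\Omega^{1}_{\mathbb{F}_{n}}\to\mathcal{O}_{\mathbb{F}_{n}}(-2C_{0}-nF)\to0$ and $2K_{\mathbb{F}_{n}}+2C\sim(2a-4)C_{0}+(2b-2n-4)F$, tensoring gives
\begin{equation*}
0\to A\to\mathcal{F}\to B\to0,\qquad A=\mathcal{O}_{\mathbb{F}_{n}}\big((2a-4)C_{0}+(2b-2n-6)F\big),\ \ B=\mathcal{O}_{\mathbb{F}_{n}}\big((2a-6)C_{0}+(2b-3n-4)F\big).
\end{equation*}
The idea is then to invoke the jet-ampleness results of \cite{MR1698897}: tensoring this sequence with $\mathcal{O}_{\mathbb{F}_{n}}/W$ for $W=\otimes_{j}\mathfrak{m}_{p_{j}}^{k_{j}}$ with $\sum_{j}k_{j}=2\delta$ keeps it exact (all sheaves are locally free), and because $W$ is supported on points the resulting bottom row $0\to H^{0}(A\otimes\mathcal{O}/W)\to H^{0}(\mathcal{F}\otimes\mathcal{O}/W)\to H^{0}(B\otimes\mathcal{O}/W)\to0$ is exact. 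A snake-lemma chase shows that if $A$ and $B$ are $(2\delta-1)$-jet ample and $H^{1}(\mathbb{F}_{n},A)=0$, then the middle evaluation map is onto for every such $W$; hence $\mathcal{F}$ is $(2\delta-1)$-jet ample.

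It then remains to verify the numerics from the hypotheses $a\geq\delta+2$ and $b\geq(a-1)n+\delta+2$. Recalling the intersection criterion on $\mathbb{F}_{n}$, namely that $\mathcal{O}(r_{1}C_{0}+r_{2}F)$ is $k$-jet ample precisely when $r_{1}\geq k$ and $r_{2}-r_{1}n\geq k$, one computes $A\cdot F=2a-4\geq2\delta$, $A\cdot C_{0}=2\big(b-(a-1)n-3\big)\geq2\delta-2$, while $B\cdot F=2a-6\geq2\delta-2$ and $B\cdot C_{0}=2(b-an)+3n-4\geq2\delta+n$; the vanishing $H^{1}(\mathbb{F}_{n},A)=0$ follows from the cohomology of line bundles on $\mathbb{F}_{n}$ (as in Lemma \ref{lemma3}). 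The hard part will be the boundary case $a=\delta+2$, $b=(a-1)n+\delta+2$: there $A$ fails $(2\delta-1)$-jet ampleness \emph{only} in the $C_{0}$-direction while $B$ fails it \emph{only} along the fibers, so neither line bundle alone reaches the required order and the naive tensor splitting à la Lemma \ref{lemma1nodo} is short by one. Closing this gap is exactly where the finer vector-bundle jet-ampleness statements of \cite{MR1698897} are needed: one must exploit that the extension $\mathcal{F}$ separates fiber jets through the very positive sub-bundle $A$ and horizontal jets through the quotient $B$, the two deficiencies being complementary, to recover the full $(2\delta-1)$-jet ampleness (alternatively, via an equivariant/combinatorial jet-ampleness criterion for the toric bundle $\mathcal{F}$ on $\mathbb{F}_{n}$).
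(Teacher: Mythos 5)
Your reduction of the surjectivity of $h$ to the $(2\delta-1)$-jet ampleness of $\mathcal{F}$ (via the tuple $(2,\dots,2)$ and Lemma \ref{lemma2}) is correct and is exactly what the paper does. The problem is the jet-ampleness statement itself: your argument does not close, and you essentially say so. Propagating $(2\delta-1)$-jet ampleness through the twisted relative cotangent sequence $0\to A\to\mathcal{F}\to B\to 0$ requires \emph{both} $A$ and $B$ to be $(2\delta-1)$-jet ample, and your own numerics show that at the boundary $a=\delta+2$, $b=(a-1)n+\delta+2$ one has $A$ reaching only order $2\delta-2$ in the $C_{0}$-direction and $B$ only order $2\delta-2$ along the fibers. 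The claim that the two deficiencies are ``complementary'' and can be recombined through the extension is not a standard consequence of anything in \cite{MR1698897}; $k$-jet ampleness of a vector bundle is a surjectivity statement for every weighted $0$-dimensional quotient, and there is no general principle letting a sub-bundle supply the missing jet directions that the quotient lacks (nor conversely). As written, the key step of the lemma is asserted, not proved.

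The paper sidesteps the dévissage entirely: it writes $\mathcal{F}=(\mathcal{F}\otimes\mathcal{L}^{-1})\otimes\mathcal{L}$ with $\mathcal{L}=\mathcal{O}_{\mathbb{F}_{n}}\bigl((2\delta-1)C_{0}+(2\delta-1)(n+1)F\bigr)$, i.e.\ the $(2\delta-1)$-st power of the $1$-jet ample (very ample) line bundle $\mathcal{O}_{\mathbb{F}_{n}}(C_{0}+(n+1)F)$, hence $(2\delta-1)$-jet ample; one then checks that under $a\geq\delta+2$ and $b\geq(a-1)n+\delta+2$ the twist $\mathcal{F}\otimes\mathcal{L}^{-1}=\Omega^{1}_{\mathbb{F}_{n}}\bigl((2a-3-2\delta)C_{0}+(2b-3-n-2\delta n-2\delta)F\bigr)$ is globally generated ($0$-jet ample), and invokes (\cite{MR1698897}, 2.3): a globally generated vector bundle tensored with a $k$-jet ample line bundle is $k$-jet ample. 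This is the same mechanism as Lemma \ref{lemma1nodo} with $\mathcal{L}$ scaled up, and it uses the full strength of the hypotheses with no boundary case left over. If you want to salvage your route, you would need either to strengthen the numerical hypotheses so that both $A$ and $B$ are genuinely $(2\delta-1)$-jet ample, or to prove the ``complementary directions'' claim — but the single-twist argument makes both unnecessary.
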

    \begin{proof}
        Let $\mathcal{L}=\mathcal{O}_{\mathbb{F}_{n}}((2\delta-1)C_{0}+(2\delta n+2\delta-n-1)F)$. As $\mathcal{O}_{\mathbb{F}_{n}}(C_{0}+(n+1)F)$ is $1-$jet ample (recall that a line bundle $\mathcal{O}_{\mathbb{F}_{n}}(r_{1}C_{0}+ r_{2}F)$ on $\mathbb{F}_{n}$ is very ample if only if $r_{2}> r_{1}\,\cdot n$) then $\mathcal{L}$ is $(2\delta-1)-$jet ample. Note that $\mathcal{F}\otimes \mathcal{L}^{-1}=\Omega^{1}_{\mathbb{F}_{n}}((2a-3-2\delta)C_{0}+(2b-3-n-2\delta n-2\delta)F)$ is generated globally if $2a-3-2\delta\geq 0$ and $2b-3-n-2\delta n-2\delta\geq (2a-3-2\delta)n$. Thus, with the conditions, we see that $\mathcal{F}\otimes \mathcal{L}^{-1}$ is 0-jet ample and $ \mathcal{L}$ is $(2\delta-1)$-jet ample. Therefore $\mathcal{F}=\mathcal{F}\otimes \mathcal{L}^{-1}\otimes\mathcal{L}$ is $(2\delta-1)$-jet ample.\\
    In particular, for the $\delta-$tuple $(2,2,\cdots, 2)$, the evaluation map 
    \begin{equation*}
        H^{0}(\mathbb{F}_{n},\mathcal{F})\rightarrow H^{0}(\mathbb{F}_{n},\mathcal{F}\otimes(\mathcal{O}_{\mathbb{F}_{n}}/\otimes_{i=1}^{\delta}\mathfrak{m}_{p_{i}}^{2}))
    \end{equation*}
   is surjective, and so is the morphism $h$.
   \end{proof}
   \begin{corollary}\label{corolariovanishing2k+2c}
    If $a\geq \delta +2$ and $b\geq (a-1)n+\delta+2$, then $H^{1}(X,\Omega^{1}_{X}(2K_{X}+2\widetilde{C}))=0.$
\end{corollary}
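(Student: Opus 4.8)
The plan is to exploit the long exact sequence in cohomology attached to the sequence (\ref{secotang4}), together with the natural isomorphism already established between cohomology on $X$ and cohomology on $\mathbb{F}_{n}$ twisted by the square of the ideal sheaf of $Z$. From the middle portion of that long exact sequence, and using that $H^{1}(E_{j},\mathcal{O}_{E_{j}}(2E_{j}+2K_{X}+2\widetilde{C}))=0$ (the summands of the cokernel have degree $0$), one reads off the surjection
\[
H^{1}(X,\sigma^{*}\Omega^{1}_{\mathbb{F}_{n}}(2K_{\mathbb{F}_{n}}+2C)\otimes\mathcal{O}_{X}(-2E)) \twoheadrightarrow H^{1}(X,\Omega^{1}_{X}(2K_{X}+2\widetilde{C})),
\]
so it suffices to prove that the source vanishes. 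By the projection formula and the identity $\sigma_{*}\mathcal{O}_{X}(-2E)=\mathcal{I}^{2}_{Z}$, the source is identified with $H^{1}(\mathbb{F}_{n},\mathcal{F}\otimes\mathcal{I}^{2}_{Z})$, where $\mathcal{F}=\Omega^{1}_{\mathbb{F}_{n}}(2K_{\mathbb{F}_{n}}+2C)$.

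First I would check that the numerical hypotheses $a\geq\delta+2$ and $b\geq(a-1)n+\delta+2$ contain the hypotheses of Lemma \ref{lemma3}: since $\delta\geq 1$ we have $a\geq\delta+2\geq 3$ and $b\geq(a-1)n+\delta+2\geq(a-1)n+3$, so Lemma \ref{lemma3} gives $H^{i}(\mathbb{F}_{n},\mathcal{F})=0$ for $i=1,2$. Next, under the very same hypotheses, Lemma \ref{lemmadeltanodos} shows that $\mathcal{F}$ is $(2\delta-1)$-jet ample at $Z$, and in particular that the evaluation map $h$ of the preceding long exact sequence is surjective.

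With both inputs in hand, Remark \ref{remarkvanishing} applies verbatim and yields $H^{1}(\mathbb{F}_{n},\mathcal{F}\otimes\mathcal{I}^{2}_{Z})=0$, hence
\[
H^{1}(X,\sigma^{*}\Omega^{1}_{\mathbb{F}_{n}}(2K_{\mathbb{F}_{n}}+2C)\otimes\mathcal{O}_{X}(-2E))=0.
\]
Feeding this into the surjection above forces $H^{1}(X,\Omega^{1}_{X}(2K_{X}+2\widetilde{C}))=0$, as claimed.

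Since the genuinely nontrivial content, namely the surjectivity of $h$ obtained from the jet-ampleness of $\mathcal{F}$, is already packaged in Lemma \ref{lemmadeltanodos}, I do not expect any real obstacle here: the corollary is an assembly of the preceding lemmas through the long exact sequence of (\ref{secotang4}). The only point requiring a moment's care is the bookkeeping, namely confirming that the single pair of bounds in the statement simultaneously implies both the cohomology vanishing on $\mathbb{F}_{n}$ (Lemma \ref{lemma3}) and the surjectivity of $h$ (Lemma \ref{lemmadeltanodos}); both reductions hold precisely because $\delta\geq 1$.
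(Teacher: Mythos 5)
Your proposal is correct and follows exactly the paper's route: the paper's proof is the one-line "By Lemma \ref{lemma3} and Lemma \ref{lemmadeltanodos}", which tacitly relies on the long exact sequence of (\ref{secotang4}) and Remark \ref{remarkvanishing} that you spell out explicitly. Your verification that the hypotheses $a\geq\delta+2$, $b\geq(a-1)n+\delta+2$ subsume those of Lemma \ref{lemma3} (using $\delta\geq 1$) is the same bookkeeping the paper leaves implicit.
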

   \begin{proof}
        By Lemma \ref{lemma3} and Lemma \ref{lemmadeltanodos}.
   \end{proof}
   
 In summary, putting these all together, we obtain the following:
\begin{proposition}
    Suppose that $a\geq \max\{5,\delta+2\}$ and $b\geq\max\{(a-2)n+6,(a-1)n+\delta+2\}$, or that $n=0$, $a\geq \max\{5,\delta+2\}$ and $b\geq \max\{5,\delta+2\}$. Then
      \begin{itemize}
        \item $H^{0}(X,\Omega^{1}_{X}(2K_{X}+\widetilde{C}))\cong \mathbb{C}^{8g+22-3C^{2}}$,
        \item $H^{1}(X,\Omega^{1}_{X}(2K_{X}+\widetilde{C}))\cong \mathbb{C}^{\delta}$,
        \item $H^{2}(X,\Omega^{1}_{X}(2K_{X}+\widetilde{C}))=0$,
        \item $H^{0}(X,\Omega^{1}_{X}(2K_{X}+2\widetilde{C}))\cong\mathbb{C}^{16g-4C^{2}-5\delta+14}$,
        \item $H^{1}(X,\Omega^{1}_{X}(2K_{X}+2\widetilde{C}))\cong H^{2}(X,\Omega^{1}_{X}(2K_{X}+2\widetilde{C}))=0,$
        \item $H^{0}(\widetilde{C},\Omega^{1}_{X}(2K_{X}+2\widetilde{C})|_{\widetilde{C}})\cong\mathbb{C}^{8g-C^{2}-4\delta-8}$,
        \item $H^{1}(\widetilde{C},\Omega^{1}_{X}(2K_{X}+2\widetilde{C})|_{\widetilde{C}})=0$.
    \end{itemize}
\end{proposition}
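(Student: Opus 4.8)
The plan is to assemble the seven statements from the vanishing results already proved, adding Riemann--Roch dimension counts wherever a numerical value is asserted. First I would observe that the hypotheses $a\geq\max\{5,\delta+2\}$ and $b\geq\max\{(a-2)n+6,(a-1)n+\delta+2\}$ (and the parallel $n=0$ hypotheses) are exactly what is needed so that both Lemma \ref{condiciones1} (requiring $a\geq 5$, $b\geq(a-2)n+6$) and Corollary \ref{corolariovanishing2k+2c} (requiring $a\geq\delta+2$, $b\geq(a-1)n+\delta+2$) apply simultaneously. The three statements about $\Omega^1_X(2K_X+\widetilde{C})$ and the vanishing $H^1(X,\Omega^1_X(2K_X+2\widetilde{C}))=0$ are then immediate from these two results; the vanishing $H^2(X,\Omega^1_X(2K_X+2\widetilde{C}))=0$ follows from the cohomology sequence of $(\ref{secotang4})$ together with $H^2(\mathbb{F}_n,\mathcal{F})=0$ from Lemma \ref{lemma3}.

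For the dimension counts the central tool is Hirzebruch--Riemann--Roch for the rank-two bundle $\Omega^1_{\mathbb{F}_n}\otimes N$. Using $c_1(\Omega^1_{\mathbb{F}_n})=K_{\mathbb{F}_n}$, $c_2(\Omega^1_{\mathbb{F}_n})=c_2(T_{\mathbb{F}_n})=4$, $K_{\mathbb{F}_n}^2=8$ and $\chi(\mathcal{O}_{\mathbb{F}_n})=1$, a short computation collapses to the clean identity $\chi(\Omega^1_{\mathbb{F}_n}\otimes N)=N^2-2$. Since Lemma \ref{lemma1} forces $h^1=h^2=0$ for $N=2K_{\mathbb{F}_n}+C$, the first group has dimension $\chi=N^2-2$, and substituting $K_{\mathbb{F}_n}\cdot C=2g-2-C^2$ and $K_{\mathbb{F}_n}^2=8$ gives $8g+22-3C^2$. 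Likewise, with $N=2K_{\mathbb{F}_n}+2C$ one gets $\chi(\mathbb{F}_n,\mathcal{F})=N^2-2=16g-4C^2+14$.

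I would then pass to $X$. From $(\ref{secotang4})$, the degree-zero summands $\bigoplus_j\mathcal{O}_{E_j}(2E_j+2K_X+2\widetilde{C})$ contribute $\delta$ to the Euler characteristic, while the kernel pushes forward to $\mathcal{F}\otimes\mathcal{I}^2_Z$, whose $\chi$ equals $\chi(\mathbb{F}_n,\mathcal{F})-6\delta$ because $\mathcal{F}\otimes(\mathcal{O}_{\mathbb{F}_n}/\mathcal{I}^2_Z)$ has length $6\delta$ ($\mathcal{F}$ has rank two and each $\mathcal{O}/\mathfrak{m}^2_{p_j}$ has length three). Hence $\chi(X,\Omega^1_X(2K_X+2\widetilde{C}))=\chi(\mathbb{F}_n,\mathcal{F})-5\delta=16g-4C^2-5\delta+14$, and since both higher cohomologies vanish this equals $h^0$. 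For the restriction to $\widetilde{C}$, I apply cohomology to the sequence $(\ref{sucesioneuleromega})$: the group $H^1(\widetilde{C},\Omega^1_X(2K_X+2\widetilde{C})\mid_{\widetilde{C}})$ sits between $H^1(X,\Omega^1_X(2K_X+2\widetilde{C}))=0$ and $H^2(X,\Omega^1_X(2K_X+\widetilde{C}))=0$, so it vanishes. Its $h^0$ then equals its Euler characteristic, computed by Riemann--Roch on the curve $\widetilde{C}$ of genus $\widetilde{g}=g-\delta$: the restricted bundle has degree $(5K_X+4\widetilde{C})\cdot\widetilde{C}$, and inserting $\widetilde{C}^2=C^2-4\delta$ and $K_X\cdot\widetilde{C}=2g-2-C^2+2\delta$ yields $8g-C^2-4\delta-8$.

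The genuinely delicate point is not the vanishing, which is inherited from the cited lemmas, but the Euler-characteristic bookkeeping on the blow-up: one must correctly combine the $-6\delta$ coming from squaring the ideal sheaf with the $+\delta$ coming from the exceptional torsion sheaves to land on the asserted $-5\delta$, and must track the degree and genus corrections $(\widetilde{C}^2,\ K_X\cdot\widetilde{C},\ \widetilde{g})$ consistently when descending to $\widetilde{C}$. Provided these intersection-theoretic substitutions are carried out carefully, all seven equalities follow.
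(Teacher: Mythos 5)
Your argument is correct, and it reaches all seven statements by the same overall strategy as the paper (vanishing from Lemma \ref{condiciones1}, Lemma \ref{lemma3} and Corollary \ref{corolariovanishing2k+2c}, plus an Euler-characteristic count), but the numerical part is organized differently. The paper proves a single closed formula on the blow-up itself: using Noether's formula to get $\chi_{top}(X)=4+\delta$ and then Riemann--Roch for the rank-two bundle $\Omega^{1}_{X}(\widetilde{D})$, it obtains $\chi(\Omega^{1}_{X}(\widetilde{D}))=\widetilde{D}^{2}-(2+\delta)$ for every divisor $\widetilde{D}$ on $X$, and all dimension counts follow by substituting $\widetilde{D}=2K_{X}+\widetilde{C}$ and $\widetilde{D}=2K_{X}+2\widetilde{C}$. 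You instead prove the analogous identity $\chi(\Omega^{1}_{\mathbb{F}_{n}}\otimes N)=N^{2}-2$ downstairs on $\mathbb{F}_{n}$ and transport it to $X$ through the exact sequence (\ref{secotang4}) and the ideal-sheaf sequence, balancing the $-6\delta$ from $\mathcal{F}\otimes(\mathcal{O}_{\mathbb{F}_{n}}/\mathcal{I}^{2}_{Z})$ against the $+\delta$ from the exceptional torsion sheaves; your bookkeeping ($16g-4C^{2}+14-6\delta+\delta=16g-4C^{2}-5\delta+14$) agrees with the paper's direct computation ($\widetilde{D}^{2}-(2+\delta)$ with $\widetilde{D}^{2}=16g+16-4C^{2}-4\delta$). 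The paper's route is more economical, since one formula covers both twists at once; your route has the advantage of reusing the exact sequences already displayed in the section and of making explicit two points the paper leaves implicit, namely the vanishing of $H^{2}(X,\Omega^{1}_{X}(2K_{X}+2\widetilde{C}))$ (which Corollary \ref{corolariovanishing2k+2c} does not state) and the last two items about the restriction to $\widetilde{C}$, which you correctly derive from the cohomology sequence of (\ref{sucesioneuleromega}) and Riemann--Roch on the curve of genus $\widetilde{g}=g-\delta$.
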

\begin{proof}
We will show that, for every divisor $\widetilde{D}$ in $X$, the holomorphic Euler characteristic $\chi(\Omega^{1}_{X}(\widetilde{D}))$ for the vector bundle $\Omega^{1}_{X}(\widetilde{D})$ is equal to $\widetilde{D}^{2}-(2+\delta)$; and by the hypotheses we can apply the lemma \ref{condiciones1} and corollary \ref{corolariovanishing2k+2c}.\\
Recall that $\sigma:X=\textnormal{Bl}_{Z}\mathbb{F}_{n}\rightarrow\mathbb{F}_{n}$ is the blow up of $\mathbb{F}_{n}$ at $Z=\{p_{1},\cdots,p_{\delta}\}$, and the Noether formula for surfaces is
\begin{equation*}
    \chi(\mathcal{O}_{\mathbb{F}_{n}})=\frac{1}{12}(K_{\mathbb{F}_{n}}^{2}+\chi_{top}(\mathbb{F}_{n}))=\frac{1}{12}(c_{1}^{2}+c_{2})
\end{equation*}
with the Chern class $c_{1}=c_{1}(T_{\mathbb{F}_{n}})=-c_{1}(\Omega^{1}_{\mathbb{F}_{n}})$ and $c_{2}=c_{2}(T_{\mathbb{F}_{n}})=c_{2}(\Omega^{1}_{\mathbb{F}_{n}})$. Given that $\chi(\mathcal{O}_{\mathbb{F}_{n}})=1$, then $\chi_{top}(\mathbb{F}_{n})=4$, and by the birational invariant $\chi$, we have $\chi(\mathcal{O}_{X})=\chi(\mathcal{O}_{\mathbb{F}_{n}})$ and $\chi(\mathcal{O}_{X})=\frac{1}{12}(K_{X}^{2}+\chi_{top}(X))=\frac{1}{12}(K_{\mathbb{F}_{n}}^{2}-\delta+\chi_{top}(X))$, this tells us that $\chi_{top}(X)=\chi_{top}(\mathbb{F}_{n})+\delta= 4+\delta$. The Riemann-Roch theorem for a rank 2 bundle $\mathcal{E}$ on the surface $X$ tell us that
    \begin{equation*}
        \chi(\mathcal{E})=2\chi(\mathcal{O}_{X})-\frac{1}{2}c_{1}(E)K_{X}+\frac{(c_{1}(\mathcal{E})^{2}-2c_{2}(\mathcal{E}))}{2}
    \end{equation*}
    where $c_{i}(\mathcal{E})$ are the Chern class of $\mathcal{E}$. If $\mathcal{E}=\Omega^{1}_{X}\otimes\mathcal{O}_{X}(\widetilde{D})$ for a divisor $\widetilde{D}$ on $X$, then $c_{1}(\Omega^{1}_{X}(\widetilde{D}))=c_{1}(\Omega^{1}_{X})+2c_{1}(\mathcal{O}_{X}(\widetilde{D}))=K_{X}+2\widetilde{D}$ and $c_{2}(\Omega^{1}_{X}(\widetilde{D}))=c_{2}(\Omega^{1}_{X})+c_{1}(\Omega^{1}_{X})c_{1}(\mathcal{O}_{X}(\widetilde{D}))+ (c_{1}(\mathcal{O}_{X}(\widetilde{D})))^{2}=4+\delta+K_{X}.\widetilde{D}+\widetilde{D}^{2}$.
    We then compute
    \begin{equation*}
        \begin{split}
            \chi(\Omega^{1}_{X}(\widetilde{D}))&=2-\frac{1}{2}(K_{X}+2\widetilde{D})K_{X}+\frac{(K_{X}+2\widetilde{D})^{2}-2(4+\delta+K_{X}.\widetilde{D}+\widetilde{D}^{2})}{2}\\
            &=\widetilde{D}^{2}-(2+\delta).
        \end{split}
    \end{equation*}
\end{proof}
Therefore, the diagram in (\ref{diagramaconmuativo1}) is as follows:
\[ 
\begin{tikzcd}[column sep=1em,row sep=1em]
   0\arrow{d}  & 0\arrow{d} &  & &
 \\%
 H^{0}(\Omega^{1}(2K_{X}+\widetilde{C}))\arrow{d} \arrow[r, equal] & H^{0}(\Omega^{1}(2K_{X}+\widetilde{C}))\arrow{d} &  &  &
 \\
 H^{0}(\Omega^{1}_{X}(\log \widetilde{C})(2K_{X}+\widetilde{C}))\arrow{d} \arrow[r, hook]  & H^{0}(\Omega^{1}(2K_{X}+2\widetilde{C}))\arrow{d}{\alpha} \arrow{r}{H^{0}(\rho)} & H^{0}((\Omega^{1}_{\widetilde{C}})^{\otimes 3})\arrow{r} \arrow[d, equal] & H^{1}(\Omega^{1}_{X}(\log \widetilde{C})(2K_{X}+\widetilde{C}))\arrow{r}\arrow{d}& 0
 \\
  H^{0}(\mathcal{O}_{\widetilde{C}}(2K_{X}+\widetilde{C}))\arrow{d}\arrow[r, hook]  & H^{0}(\Omega^{1}(2K_{X}+\widetilde{C})|_{\widetilde{C}})\arrow{d}\arrow{r}{\beta} & H^{0}((\Omega^{1}_{\widetilde{C}})^{\otimes 3})\arrow{r} & H^{1}(\mathcal{O}_{\widetilde{C}}(2K_{X}+\widetilde{C})) \arrow{r}\arrow{d} & 0
 \\
 H^{1}(\Omega^{1}_{X}(2K_{X}+\widetilde{C}))\arrow{d}\arrow[r, equal]   & H^{1}(\Omega^{1}_{X}(2K_{X}+\widetilde{C}))\arrow{d} &  & 0 &
 \\
 H^{1}(\Omega^{1}_{X}(\log \widetilde{C})(2K_{X}+\widetilde{C}))\arrow{d}  & 0 &  & &
 \\
   H^{1}(\mathcal{O}_{\widetilde{C}}(2K_{X}+\widetilde{C}))\arrow{d} &  &  & &
 \\
   0 &  &  & &
\end{tikzcd}
\]
\begin{remark}\label{remarkh0rho1}
This diagram tells us that 
\[
\textnormal{corank}(H^{0}(\rho))= h^{1}(X,\Omega^{1}_{X}(\log\widetilde{C})(2K_{X}+\widetilde{C})).
\]
\end{remark}
 The next proposition gives us conditions to compute its dimension.
\begin{proposition}\label{propisomlogandh1}
    Suppose that $a\geq 5$ and $b\geq (a-2)n+6$, or that $a\geq 5$ and $b\geq 5$ if $n=0$. Then 
    \begin{equation*}
        H^{1}(X,\Omega^{1}_{X}(\log \widetilde{C})(2K_{X}+\widetilde{C}))\cong H^{1}(C,\mathcal{O}_{\mathbb{F}_{n}}(2K_{\mathbb{F}_{n}}+C)|_{C}).
    \end{equation*}
\end{proposition}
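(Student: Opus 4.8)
The plan is to transfer the computation from $X$ down to $\mathbb{F}_{n}$ along the birational morphism $\sigma$, using the two facts established in Section~\ref{sec:pre}, namely $R^{1}\sigma_{*}\Omega^{1}_{X}(\log\widetilde{C})=0$ together with the identifications $\sigma_{*}\Omega^{1}_{X}\cong\Omega^{1}_{\mathbb{F}_{n}}$ and $R^{1}\sigma_{*}\Omega^{1}_{X}\cong\mathcal{O}_{Z}$, and finally feeding in the vanishing from Lemma~\ref{lemma1}. First I would record that $2K_{X}+\widetilde{C}=\sigma^{*}(2K_{\mathbb{F}_{n}}+C)$, so that $\Omega^{1}_{X}(\log\widetilde{C})(2K_{X}+\widetilde{C})=\Omega^{1}_{X}(\log\widetilde{C})\otimes\sigma^{*}\mathcal{O}_{\mathbb{F}_{n}}(2K_{\mathbb{F}_{n}}+C)$. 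By the projection formula and $R^{1}\sigma_{*}\Omega^{1}_{X}(\log\widetilde{C})=0$ one obtains $R^{1}\sigma_{*}\bigl(\Omega^{1}_{X}(\log\widetilde{C})(2K_{X}+\widetilde{C})\bigr)=0$; since the fibres of $\sigma$ are at most one-dimensional, $R^{2}\sigma_{*}$ vanishes as well, the Leray spectral sequence degenerates, and
\[
H^{1}\bigl(X,\Omega^{1}_{X}(\log\widetilde{C})(2K_{X}+\widetilde{C})\bigr)\cong H^{1}\bigl(\mathbb{F}_{n},\sigma_{*}(\Omega^{1}_{X}(\log\widetilde{C})(2K_{X}+\widetilde{C}))\bigr).
\]

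Next I would push the exact sequence (\ref{sucesionlogarithmic3}) (with $S=X$ and $C=\widetilde{C}$) forward by $\sigma$. Writing $\varphi=\sigma|_{\widetilde{C}}$ for the normalization map and $\mathcal{L}=\mathcal{O}_{\mathbb{F}_{n}}(2K_{\mathbb{F}_{n}}+C)|_{C}$, the third term $\mathcal{O}_{X}(2K_{X}+\widetilde{C})|_{\widetilde{C}}$ equals $\varphi^{*}\mathcal{L}$, so its pushforward is $\varphi_{*}\varphi^{*}\mathcal{L}$ with $R^{1}\sigma_{*}=0$ because $\varphi$ is finite; the first term pushes to $\Omega^{1}_{\mathbb{F}_{n}}(2K_{\mathbb{F}_{n}}+C)$ with $R^{1}\sigma_{*}\cong\mathcal{O}_{Z}$ by the projection formula. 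Using $R^{1}\sigma_{*}\bigl(\Omega^{1}_{X}(\log\widetilde{C})(2K_{X}+\widetilde{C})\bigr)=0$, the long exact sequence of higher direct images collapses to
\[
0\to\Omega^{1}_{\mathbb{F}_{n}}(2K_{\mathbb{F}_{n}}+C)\to\sigma_{*}\bigl(\Omega^{1}_{X}(\log\widetilde{C})(2K_{X}+\widetilde{C})\bigr)\to\varphi_{*}\varphi^{*}\mathcal{L}\xrightarrow{\ \partial\ }\mathcal{O}_{Z}\to0,
\]
in which $\partial$ is surjective. I claim that $\ker\partial$ coincides with the image of the canonical inclusion $\mathcal{L}\hookrightarrow\varphi_{*}\varphi^{*}\mathcal{L}$, so that one gets the short exact sequence
\[
0\to\Omega^{1}_{\mathbb{F}_{n}}(2K_{\mathbb{F}_{n}}+C)\to\sigma_{*}\bigl(\Omega^{1}_{X}(\log\widetilde{C})(2K_{X}+\widetilde{C})\bigr)\to\mathcal{L}\to0.
\]

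The main obstacle is exactly this identification of $\ker\partial$, which is a local statement at each node: the length-$\delta$ skyscraper $R^{1}\sigma_{*}\Omega^{1}_{X}\cong\mathcal{O}_{Z}$ coming from the exceptional curves must be matched through $\partial$ with the length-$\delta$ quotient $\varphi_{*}\varphi^{*}\mathcal{L}/\mathcal{L}\cong\mathcal{O}_{Z}$ coming from the normalization. Concretely, in local coordinates $(x,y)$ with $C=\{xy=0\}$, I would compare the residue of a logarithmic form along the two \emph{separated} branches of $\widetilde{C}$ upstairs with the requirement that the form be regular along the exceptional divisor $E_{j}$; since $\sigma^{*}\!\left(\tfrac{dx}{x}+\tfrac{dy}{y}\right)$ acquires a pole along $E_{j}$ while $\sigma^{*}\!\left(\tfrac{dy}{y}-\tfrac{dx}{x}\right)$ does not, only residue data satisfying a single linear relation on the branches descend. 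This cuts out a rank-one subsheaf of $\varphi_{*}\varphi^{*}\mathcal{L}$ which, although realized via a sign flip on one branch, is isomorphic as an $\mathcal{O}_{\mathbb{F}_{n}}$-module to $\mathcal{L}$, and this subsheaf is $\ker\partial$.

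Once this is settled the conclusion is immediate: taking cohomology of the last short exact sequence and invoking Lemma~\ref{lemma1}, which gives $H^{1}(\mathbb{F}_{n},\Omega^{1}_{\mathbb{F}_{n}}(2K_{\mathbb{F}_{n}}+C))=H^{2}(\mathbb{F}_{n},\Omega^{1}_{\mathbb{F}_{n}}(2K_{\mathbb{F}_{n}}+C))=0$ under the hypotheses on $a$ and $b$, I obtain
\[
H^{1}\bigl(\mathbb{F}_{n},\sigma_{*}(\Omega^{1}_{X}(\log\widetilde{C})(2K_{X}+\widetilde{C}))\bigr)\cong H^{1}(C,\mathcal{L}).
\]
Combining this with the Leray isomorphism of the first step yields $H^{1}(X,\Omega^{1}_{X}(\log\widetilde{C})(2K_{X}+\widetilde{C}))\cong H^{1}(C,\mathcal{O}_{\mathbb{F}_{n}}(2K_{\mathbb{F}_{n}}+C)|_{C})$, as claimed; the numerical hypotheses are used only to guarantee the two vanishings coming from Lemma~\ref{lemma1}.
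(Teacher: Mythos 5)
Your strategy is essentially the paper's: descend to $\mathbb{F}_{n}$ via Leray (using $R^{1}\sigma_{*}\Omega^{1}_{X}(\log\widetilde{C})=0$), exhibit $\sigma_{*}\bigl(\Omega^{1}_{X}(\log\widetilde{C})(2K_{X}+\widetilde{C})\bigr)$ as an extension of a rank-one sheaf supported on $C$ by $\Omega^{1}_{\mathbb{F}_{n}}(2K_{\mathbb{F}_{n}}+C)$, and kill the outer cohomology with Lemma~\ref{lemma1}. The only cosmetic difference is that you twist by $\sigma^{*}\mathcal{O}_{\mathbb{F}_{n}}(2K_{\mathbb{F}_{n}}+C)$ before pushing forward, whereas the paper pushes forward the untwisted sequence $0\to\Omega^{1}_{X}\to\Omega^{1}_{X}(\log\widetilde{C})\to\mathcal{O}_{\widetilde{C}}\to0$ and twists afterwards; by the projection formula these are the same computation. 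The paper identifies the relevant kernel $\mathcal{G}=\ker(\sigma_{*}\mathcal{O}_{\widetilde{C}}\to\mathcal{O}_{Z})$ with $\mathcal{O}_{C}$ simply by ``comparing'' with the normalization sequence; you correctly isolate this identification as the crux and attempt the local residue computation.

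The gap is precisely in the conclusion of that step, and your own local analysis shows why it does not close. The condition for a logarithmic form downstairs to pull back without a pole along $E_{j}$ is that the residues on the two branches \emph{sum} to zero, so the stalk of $\ker\partial$ at a node is $\{(g_{1},g_{2}):g_{1}(q_{1})+g_{2}(q_{2})=0\}$, while the stalk of $\mathcal{L}=\mathcal{O}_{\mathbb{F}_{n}}(2K_{\mathbb{F}_{n}}+C)|_{C}$ inside $\varphi_{*}\varphi^{*}\mathcal{L}$ is $\{(g_{1},g_{2}):g_{1}(q_{1})=g_{2}(q_{2})\}$. The sign flip matching them is only a \emph{local} isomorphism: globally it requires choosing a sign on ``one branch at each node,'' and what one actually obtains is $\ker\partial\cong\mathcal{L}\otimes\tau$, where $\tau$ is the rank-one sheaf obtained from $\mathcal{O}_{\widetilde{C}}$ by gluing the fibres over $q_{1}$ and $q_{2}$ by $-1$ at every node. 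For $C$ irreducible with $\delta\geq 1$ this $2$-torsion line bundle is nontrivial: a global unit on $\widetilde{C}$ is a constant and cannot take opposite values at $q_{1}$ and $q_{2}$; equivalently $h^{0}(C,\tau)=0\neq h^{0}(C,\mathcal{O}_{C})$. So your argument, as written, only gives $H^{1}(X,\Omega^{1}_{X}(\log\widetilde{C})(2K_{X}+\widetilde{C}))\cong H^{1}(C,\tau\otimes\mathcal{O}_{\mathbb{F}_{n}}(2K_{\mathbb{F}_{n}}+C)|_{C})$, and this need not equal $H^{1}(C,\mathcal{O}_{\mathbb{F}_{n}}(2K_{\mathbb{F}_{n}}+C)|_{C})$. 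You should be aware that the paper's own proof passes over exactly the same point (its unproved assertion $\mathcal{G}\cong\mathcal{O}_{C}$), so completing either argument requires showing that $\tau$ is trivial in this setting or reformulating the statement to account for the twist.
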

\begin{proof}
    The exact sequence of (\cite{MR1193913},2.3a) in our case is as follows:
    \begin{equation}
        0\rightarrow \Omega^{1}_{X}\rightarrow\Omega^{1}_{X}(\log \widetilde{C})\rightarrow\mathcal{O}_{\widetilde{C}}\rightarrow0,
    \end{equation}
    there is a long exact sequence of sheaves on $\mathbb{F}_{n}$
    \begin{equation}\label{pushfowardexactsequence}
        \begin{split}
            0 & \rightarrow \sigma_{*}\Omega^{1}_{X}\rightarrow \sigma_{*}\Omega^{1}_{X}(\log \widetilde{C})\rightarrow \sigma_{*}\mathcal{O}_{\widetilde{C}}\rightarrow\\
             &\rightarrow R^{1}\sigma_{*}\Omega^{1}_{X}\rightarrow R^{1}\sigma_{*}\Omega^{1}_{X}(\log \widetilde{C})\rightarrow R^{1}\sigma_{*}\mathcal{O}_{\widetilde{C}}\rightarrow\\
             &\rightarrow R^{2}\sigma_{*}\Omega^{1}_{X}\rightarrow R^{2}\sigma_{*}\Omega^{1}_{X}(\log \widetilde{C})\rightarrow 0.\\
        \end{split}
    \end{equation}
    From (\ref{pushfoward0}) we deduce $\sigma_{*}\Omega^{1}_{X}\cong\Omega^{1}_{\mathbb{F}_{n}}$, and from (\ref{pushfoward1}) and (\ref{pushfoward2}) we have $R^{1}\sigma_{*}\Omega^{1}_{X}\cong\mathcal{O}_{Z}$ and $R^{2}\sigma_{*}\Omega^{1}_{X}\cong 0$.\\ Also, we obtain that $R^{1}\sigma_{*}\Omega^{1}_{X}(\log\widetilde{C})=0$, thus $R^{1}\sigma_{*}\mathcal{O}_{\widetilde{C}}=0$.\\
    Therefore, the sequence in (\ref{pushfowardexactsequence}) is
    \begin{equation}\label{pushforwardexactsequence2}
        0\rightarrow \Omega^{1}_{\mathbb{F}_{n}} \rightarrow \sigma_{*}\Omega^{1}_{X}(\log\widetilde{C}) \rightarrow \sigma_{*}\mathcal{O}_{\widetilde{C}}\rightarrow\mathcal{O}_{Z}\rightarrow 0.
    \end{equation}
    Let $\mathcal{G}:=\textnormal{Im}(\sigma_{*}\Omega^{1}_{X}(\log\widetilde{C}) \rightarrow \sigma_{*}\mathcal{O}_{\widetilde{C}})$ and we obtain two short exact sequences 
    \begin{equation}\label{sucesioncortapushfoward1}
        0\rightarrow \Omega^{1}_{\mathbb{F}_{n}} \rightarrow \sigma_{*}\Omega^{1}_{X}(\log\widetilde{C}) \rightarrow \mathcal{G}\rightarrow 0,
    \end{equation}
    \begin{equation}\label{sucesioncortapushfoward2}
      0 \rightarrow \mathcal{G} \rightarrow \sigma_{*}\mathcal{O}_{\widetilde{C}} \rightarrow \mathcal{O}_{Z} \rightarrow 0.
    \end{equation}
    Now, from the normalization sequence on $C$
    \begin{equation}\label{exactsequencenormalization1}
        0 \rightarrow \mathcal{O}_{C}\rightarrow \varphi_{*}\mathcal{O}_{\widetilde{C}}\rightarrow\mathcal{O}_{Z}\rightarrow 0,
    \end{equation}
    where $\varphi: \widetilde{C}\rightarrow C$ is the normalization map of $C$. Note that, if $j:C\hookrightarrow \mathbb{F}_{n}$ is the inclusion then $\sigma|_{\widetilde{C}}=j\,\circ\, \varphi$, and we have $\sigma_{*}\mathcal{O}_{\widetilde{C}}=(j\,\circ\,\varphi)_{*}\mathcal{O}_{\widetilde{C}}=j_{*}(\varphi_{*}\mathcal{O}_{\widetilde{C}})$. Applying $j_{*}$ to (\ref{exactsequencenormalization1}), since $j_{*}$ is exact, we obtain an exact sequence on $\mathbb{F}_{n}$:
    \begin{equation}\label{exactsequencenormalization2}
         0 \rightarrow j_{*}\mathcal{O}_{C}\rightarrow j_{*}\varphi_{*}\mathcal{O}_{\widetilde{C}}\rightarrow\mathcal{O}_{Z}\rightarrow 0.
    \end{equation}
    Now, use the identity $j_{*}\varphi_{*}\mathcal{O}_{\widetilde{C}}=\sigma_{*}\mathcal{O}_{\widetilde{C}}$
    Thus, (\ref{exactsequencenormalization2}) becomes
    \begin{equation}\label{exactsequencenormalization3}
        0 \rightarrow \mathcal{O}_{C}\rightarrow\sigma_{*}\mathcal{O}_{\widetilde{C}}\rightarrow \mathcal{O}_{Z}\rightarrow 0.
    \end{equation}
    Comparing exact sequences (\ref{sucesioncortapushfoward2}) and (\ref{exactsequencenormalization3}), we obtain an isomorphism of sheaves on $\mathbb{F}_{n}$
    \begin{equation*}
        \mathcal{G}\cong \mathcal{O}_{C}.
    \end{equation*}
    Therefore, tensorizing the exact sequence in (\ref{sucesioncortapushfoward1}) with $\mathcal{O}_{\mathbb{F}_{n}}(2K_{\mathbb{F}_{n}}+C)$ we get
    \begin{equation*}
        0\rightarrow\Omega^{1}_{\mathbb{F}_{n}}(2K_{\mathbb{F}_{n}}+C)\rightarrow\sigma_{*}\Omega^{1}_{X}(\log\widetilde{C})(2K_{X}+\widetilde{C})\rightarrow\mathcal{O}_{C}\otimes\mathcal{O}_{\mathbb{F}_{n}}(2K_{\mathbb{F}_{n}}+C)\rightarrow 0
    \end{equation*}
    since $2K_{X}+\widetilde{C}=\sigma^{*}(2K_{\mathbb{F}_{n}}+C)$ and $\sigma_{*}\mathcal{O}_{X}(2K_{X}+\widetilde{C})=\mathcal{O}_{\mathbb{F}_{n}}(2K_{\mathbb{F}_{n}}+C)$. Thus, this sequence induces an exact sequence in cohomology
    \begin{equation*}
         \begin{split}
        0&\rightarrow H^{0}(\mathbb{F}_{n},\Omega^{1}_{\mathbb{F}_{n}}(2K_{\mathbb{F}_{n}}+C))\rightarrow H^{0}(\mathbb{F}_{n},\sigma_{*}\Omega^{1}_{X}(\log\widetilde{C})(2K_{X}+\widetilde{C}))\rightarrow H^{0}(C,\mathcal{O}_{\mathbb{F}_{n}}(2K_{\mathbb{F}_{n}}+C)|_{C})\rightarrow \\
        &\rightarrow H^{1}(\mathbb{F}_{n},\Omega^{1}_{\mathbb{F}_{n}}(2K_{\mathbb{F}_{n}}+C))\rightarrow H^{1}(\mathbb{F}_{n},\sigma_{*}\Omega^{1}_{X}(\log\widetilde{C})(2K_{X}+\widetilde{C}))\rightarrow H^{1}(C,\mathcal{O}_{\mathbb{F}_{n}}(2K_{\mathbb{F}_{n}}+C)|_{C})\rightarrow \\
         &\rightarrow H^{2}(\mathbb{F}_{n},\Omega^{1}_{\mathbb{F}_{n}}(2K_{\mathbb{F}_{n}}+C))\rightarrow H^{2}(\mathbb{F}_{n},\sigma_{*}\Omega^{1}_{X}(\log\widetilde{C})(2K_{X}+\widetilde{C}))\rightarrow 0. \\
         \end{split}
    \end{equation*}
    With the conditions of Lemma \ref{lemma1}, we obtain the vanishing of $H^{1}(\Omega^{1}_{\mathbb{F}_{n}}(2K_{\mathbb{F}_{n}}+C))$ and $H^{2}(\Omega^{1}_{\mathbb{F}_{n}}(2K_{\mathbb{F}_{n}}+C))$, and therefore we have the isomorphism
    \begin{equation*}
        H^{1}(\mathbb{F}_{n},\sigma_{*}\Omega^{1}_{X}(\log\widetilde{C})(2K_{X}+\widetilde{C}))\cong H^{1}(C,\mathcal{O}_{\mathbb{F}_{n}}(2K_{\mathbb{F}_{n}}+C)|_{C}).
    \end{equation*}
    Now, by the projection formula $R^{1}\sigma_{*}(\Omega^{1}_{X}(\log\widetilde{C})(2K_{X}+\widetilde{C}))\cong R^{1}\sigma_{*}\Omega^{1}(\log\widetilde{C})\otimes\mathcal{O}_{\mathbb{F}_{n}}(2K_{\mathbb{F}_{n}}+C)$, then $R^{1}\sigma_{*}(\Omega^{1}_{X}(\log\widetilde{C})(2K_{X}+\widetilde{C}))=0$ and therefore there is an isomorphism
    \begin{equation*}
        H^{1}(X,\Omega^{1}_{X}(\log\widetilde{C})(2K_{X}+\widetilde{C}))\cong H^{1}(\mathbb{F}_{n},\sigma_{*}\Omega^{1}_{X}(\log\widetilde{C})(2K_{X}+\widetilde{C})).
    \end{equation*}
    which gives us the result.
\end{proof}
\begin{lemma}\label{propisomh1XaC}
    If $a\geq 3$ and $b\geq (a-1)n+3$, then 
    $H^{1}(C,\mathcal{O}_{C}(2K_{\mathbb{F}_{n}}+C))\cong H^{0}(\mathbb{F}_{n},\mathcal{O}_{\mathbb{F}_{n}}(-K_{\mathbb{F}_{n}}))$.
\end{lemma}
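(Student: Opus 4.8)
The plan is to transfer the computation of $H^{1}(C,\mathcal{O}_{C}(2K_{\mathbb{F}_{n}}+C))$ from the curve to the ambient surface $\mathbb{F}_{n}$ by means of the restriction sequence, and then identify the answer through Serre duality. First I would tensor the structure sequence (\ref{se1}) for $C\subset\mathbb{F}_{n}$ with $\mathcal{O}_{\mathbb{F}_{n}}(2K_{\mathbb{F}_{n}}+C)$ to obtain
\[
0\rightarrow \mathcal{O}_{\mathbb{F}_{n}}(2K_{\mathbb{F}_{n}})\rightarrow \mathcal{O}_{\mathbb{F}_{n}}(2K_{\mathbb{F}_{n}}+C)\rightarrow \mathcal{O}_{C}(2K_{\mathbb{F}_{n}}+C)\rightarrow 0,
\]
and pass to the long exact sequence in cohomology. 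Since $C$ is a curve we have $H^{2}(C,-)=0$, so the relevant portion reads
\[
H^{1}(\mathbb{F}_{n},\mathcal{O}_{\mathbb{F}_{n}}(2K_{\mathbb{F}_{n}}+C))\rightarrow H^{1}(C,\mathcal{O}_{C}(2K_{\mathbb{F}_{n}}+C))\xrightarrow{\ \partial\ } H^{2}(\mathbb{F}_{n},\mathcal{O}_{\mathbb{F}_{n}}(2K_{\mathbb{F}_{n}}))\rightarrow H^{2}(\mathbb{F}_{n},\mathcal{O}_{\mathbb{F}_{n}}(2K_{\mathbb{F}_{n}}+C))\rightarrow 0.
\]
If I can show that both $H^{1}(\mathbb{F}_{n},\mathcal{O}_{\mathbb{F}_{n}}(2K_{\mathbb{F}_{n}}+C))$ and $H^{2}(\mathbb{F}_{n},\mathcal{O}_{\mathbb{F}_{n}}(2K_{\mathbb{F}_{n}}+C))$ vanish, then $\partial$ is an isomorphism $H^{1}(C,\mathcal{O}_{C}(2K_{\mathbb{F}_{n}}+C))\cong H^{2}(\mathbb{F}_{n},\mathcal{O}_{\mathbb{F}_{n}}(2K_{\mathbb{F}_{n}}))$, and Serre duality on $\mathbb{F}_{n}$ gives $H^{2}(\mathbb{F}_{n},\mathcal{O}_{\mathbb{F}_{n}}(2K_{\mathbb{F}_{n}}))\cong H^{0}(\mathbb{F}_{n},\mathcal{O}_{\mathbb{F}_{n}}(-K_{\mathbb{F}_{n}}))^{\vee}$, which is a vector space of the same dimension as $H^{0}(\mathbb{F}_{n},\mathcal{O}_{\mathbb{F}_{n}}(-K_{\mathbb{F}_{n}}))$, yielding the claimed isomorphism.

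Writing $2K_{\mathbb{F}_{n}}+C=(a-4)C_{0}+(b-4-2n)F$, using $K_{\mathbb{F}_{n}}=-2C_{0}-(2+n)F$, I would establish the two vanishings by pushing forward along the ruling $\pi:\mathbb{F}_{n}\rightarrow\mathbb{P}^{1}$. The $H^{2}$ vanishing is immediate: by Serre duality $H^{2}(\mathbb{F}_{n},\mathcal{O}_{\mathbb{F}_{n}}(2K_{\mathbb{F}_{n}}+C))\cong H^{0}(\mathbb{F}_{n},\mathcal{O}_{\mathbb{F}_{n}}(-K_{\mathbb{F}_{n}}-C))^{\vee}$, and since $-K_{\mathbb{F}_{n}}-C=(2-a)C_{0}+(2+n-b)F$ has a negative $C_{0}$-coefficient for $a\geq 3$, it carries no global sections. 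For the $H^{1}$ vanishing I would use $\pi_{*}\mathcal{O}_{\mathbb{F}_{n}}((a-4)C_{0})\cong\bigoplus_{i=0}^{a-4}\mathcal{O}_{\mathbb{P}^{1}}(-in)$ together with $R^{1}\pi_{*}=0$ whenever $a-4\geq -1$; the Leray spectral sequence then reduces the problem to $H^{1}\bigl(\mathbb{P}^{1},\bigoplus_{i=0}^{a-4}\mathcal{O}_{\mathbb{P}^{1}}(b-4-2n-in)\bigr)=0$, which holds once $b-4-2n-(a-4)n\geq -1$, i.e.\ $b\geq (a-2)n+3$. This is implied by the hypothesis $b\geq (a-1)n+3$ because $n\geq 0$. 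The boundary case $a=3$ (where the $C_{0}$-coefficient equals $-1$) is even simpler: every fiber restriction is $\mathcal{O}_{\mathbb{P}^{1}}(-1)$, so all higher direct images vanish and the cohomology is zero in every degree.

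These computations are routine, so I do not anticipate a serious obstacle; the only delicate points are the bookkeeping of the pushforward convention for $\mathcal{O}_{\mathbb{F}_{n}}(C_{0})$ and checking that the numerical bound extracted from the $H^{1}$-vanishing is genuinely weaker than the stated hypothesis $b\geq (a-1)n+3$. I would in particular treat the edge case $a-4=-1$ separately, as indicated above, to confirm that the argument covers the full range $a\geq 3$.
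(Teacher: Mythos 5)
Your proof is correct, and its skeleton coincides with the paper's: both tensor the structure sequence of $C\subset\mathbb{F}_{n}$ with $\mathcal{O}_{\mathbb{F}_{n}}(2K_{\mathbb{F}_{n}}+C)$, deduce from the vanishing of $H^{1}$ and $H^{2}$ of $\mathcal{O}_{\mathbb{F}_{n}}(2K_{\mathbb{F}_{n}}+C)$ that the connecting map onto $H^{2}(\mathbb{F}_{n},\mathcal{O}_{\mathbb{F}_{n}}(2K_{\mathbb{F}_{n}}))$ is an isomorphism, and finish with Serre duality. The only genuine divergence is how the two vanishings are obtained. The paper observes that $K_{\mathbb{F}_{n}}+C=(a-2)C_{0}+(b-2-n)F$ is ample exactly under the stated hypotheses $a\geq 3$, $b\geq (a-1)n+3$, writes $2K_{\mathbb{F}_{n}}+C=K_{\mathbb{F}_{n}}+(K_{\mathbb{F}_{n}}+C)$, and invokes Kawamata--Viehweg. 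You instead compute directly: Serre duality plus non-effectivity of $(2-a)C_{0}+(2+n-b)F$ for the $H^{2}$, and the pushforward $\pi_{*}\mathcal{O}_{\mathbb{F}_{n}}((a-4)C_{0}+(b-4-2n)F)\cong\bigoplus_{i=0}^{a-4}\mathcal{O}_{\mathbb{P}^{1}}(b-4-2n-in)$ with $R^{1}\pi_{*}=0$ for the $H^{1}$, handling $a=3$ separately. Your route is more elementary (no vanishing theorem needed) and in fact isolates the slightly weaker numerical requirement $b\geq (a-2)n+3$ for the $H^{1}$-vanishing, whereas the paper's ampleness argument genuinely uses $b\geq(a-1)n+3$; both are subsumed by the lemma's hypotheses, so nothing changes in the statement. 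One small point in your favor: you correctly record the Serre-duality target as $H^{0}(\mathbb{F}_{n},\mathcal{O}_{\mathbb{F}_{n}}(-K_{\mathbb{F}_{n}}))^{\vee}$ and pass to the (non-canonical) isomorphism with $H^{0}$ only at the level of dimensions, which is all the lemma asserts.
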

\begin{proof}
    From the exact sequence
    \begin{equation*}
        0 \rightarrow\mathcal{O}_{\mathbb{F}_{n}}(-C)\rightarrow\mathcal{O}_{\mathbb{F}_{n}}\rightarrow\mathcal{O}_{C}\rightarrow 0,
    \end{equation*}
    by tensoring with $\mathcal{O}_{\mathbb{F}_{n}}(2K_{\mathbb{F}_{n}}+C)$ we obtain 
    \begin{equation*}
        0 \rightarrow\mathcal{O}_{\mathbb{F}_{n}}(2K_{\mathbb{F}_{n}})\rightarrow\mathcal{O}_{\mathbb{F}_{n}}(2K_{\mathbb{F}_{n}}+C)\rightarrow\mathcal{O}_{C}(2K_{\mathbb{F}_{n}}+C)\rightarrow 0
    \end{equation*}
    with exact long sequence in cohomology:
    \begin{equation*}
    \begin{split}
        0&\rightarrow H^{0}(\mathbb{F}_{n},\mathcal{O}_{\mathbb{F}_{n}}(2K_{\mathbb{F}_{n}}))\rightarrow H^{0}(\mathbb{F}_{n},\mathcal{O}_{\mathbb{F}_{n}}(2K_{\mathbb{F}_{n}}+C))\rightarrow H^{0}(C,\mathcal{O}_{C}(2K_{\mathbb{F}_{n}}+C))\rightarrow\\
       &\rightarrow H^{1}(\mathbb{F}_{n},\mathcal{O}_{\mathbb{F}_{n}}(2K_{\mathbb{F}_{n}}))\rightarrow H^{1}(\mathbb{F}_{n},\mathcal{O}_{\mathbb{F}_{n}}(2K_{\mathbb{F}_{n}}+C))\rightarrow H^{1}(C,\mathcal{O}_{C}(2K_{\mathbb{F}_{n}}+C))\rightarrow\\
        &\rightarrow H^{2}(\mathbb{F}_{n},\mathcal{O}_{\mathbb{F}_{n}}(2K_{\mathbb{F}_{n}}))\rightarrow H^{2}(\mathbb{F}_{n},\mathcal{O}_{\mathbb{F}_{n}}(2K_{\mathbb{F}_{n}}+C))\rightarrow 0.\\
    \end{split}
    \end{equation*}
   If $a\geq3$ and $b\geq (a-1)n+3$ then $K_{\mathbb{F}_{n}}+C$ is an ample divisor on $\mathbb{F}_{n}$ then  $H^{i}(\mathbb{F}_{n},\mathcal{O}_{\mathbb{F}_{n}}(2K_{\mathbb{F}_{n}}+C))=0$ for $i=1,2$, by Kawamata–Viehweg vanishing theorem. Therefore, we have $$H^{1}(C,\mathcal{O}_{C}(2K_{\mathbb{F}_{n}}+C))\cong H^{2}(\mathbb{F}_{n},\mathcal{O}_{\mathbb{F}_{n}}(2K_{\mathbb{F}_{n}}))\cong H^{0}(\mathbb{F}_{n},\mathcal{O}_{\mathbb{F}_{n}}(-K_{\mathbb{F}_{n}})).$$
   It is well known that 
   \begin{equation*}
   h^{0}(\mathbb{F}_{n},\mathcal{O}_{\mathbb{F}_{n}}(-K_{\mathbb{F}_{n}})) = \left\{ \begin{array}{lcc} 9 & & n\leq 2,  \\ &  & \\  n+6 &  & n\geq 3. \end{array} \right.
\end{equation*}
\end{proof}
In summary, from all results above, we get the following:
\begin{theorem}\label{corangoh0rhofinal}
    Let $Z=\{p_{1},\dots,p_{\delta}\}$ be $\delta$ distinct reduced closed points on $\mathbb{F}_{n}$. Suppose that $C$ is a nodal curve on $\mathbb{F}_{n}$, with nodes at $Z$, and linearly equivalent to $aC_{0}+bF$ where $a,b\geq 0$. Let $\sigma:X=Bl_{Z}\mathbb{F}_{n}\rightarrow\mathbb{F}_{n}$ be the blow-up of $\mathbb{F}_{Z}$ at $Z$. Let $\widetilde{C}$ be the normalization of $C$ under $\sigma$.\\ Assume that $a\geq \max\{5,\delta+2\}$ and $b\geq\max\{(a-2)n+6,(a-1)n+\delta+2\}$, or that, $a\geq\max\{5,\delta+2\}$ and $b\geq\max\{5,\delta+2\}$ if $n\neq0$. Then 
\begin{equation*}
    \textnormal{corank} H^{0}(\rho)=h^{0}(\mathbb{F}_{n},\mathcal{O}_{\mathbb{F}_{n}}(-K_{\mathbb{F}_{n}})) = \left\{ \begin{array}{lcc} 9 & & n\leq 2,  \\ &  & \\  n+6 &  & n\geq 3. \end{array} \right.
\end{equation*}
\end{theorem}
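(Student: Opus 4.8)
The plan is to assemble the theorem by chaining together the three cohomological identities established above, after checking that the numerical hypotheses of the theorem imply those of each intermediate result. First I would invoke Remark \ref{remarkh0rho1}, which identifies $\textnormal{corank}(H^{0}(\rho))$ with the single cohomology group $h^{1}(X,\Omega^{1}_{X}(\log\widetilde{C})(2K_{X}+\widetilde{C}))$ on the blow-up. This turns the corank computation into the evaluation of one dimension, so the entire theorem reduces to computing that $h^{1}$.

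Next I would apply Proposition \ref{propisomlogandh1} to transfer this group down to $\mathbb{F}_{n}$, obtaining $h^{1}(X,\Omega^{1}_{X}(\log\widetilde{C})(2K_{X}+\widetilde{C}))=h^{1}(C,\mathcal{O}_{\mathbb{F}_{n}}(2K_{\mathbb{F}_{n}}+C)\mid_{C})$. The hypotheses needed there, namely $a\geq 5$ and $b\geq(a-2)n+6$ (or the $n=0$ variant), are weaker than the theorem's assumptions, so this step is automatic. Since $\mathcal{O}_{\mathbb{F}_{n}}(2K_{\mathbb{F}_{n}}+C)\mid_{C}=\mathcal{O}_{C}(2K_{\mathbb{F}_{n}}+C)$, I would then apply Lemma \ref{propisomh1XaC}, which yields the final identification $h^{1}(C,\mathcal{O}_{C}(2K_{\mathbb{F}_{n}}+C))=h^{0}(\mathbb{F}_{n},\mathcal{O}_{\mathbb{F}_{n}}(-K_{\mathbb{F}_{n}}))$, whose closed form—$9$ for $n\leq 2$ and $n+6$ for $n\geq 3$—is recorded at the end of that lemma. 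Concatenating the three identities gives the stated equality.

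The one point requiring a genuine check—and the only place where the $\delta$-dependence of the hypotheses enters—is confirming that the requirement $b\geq(a-1)n+3$ of Lemma \ref{propisomh1XaC} follows from the theorem's bound. This does \emph{not} follow from the term $(a-2)n+6$ alone, since $(a-2)n+6\geq(a-1)n+3$ only when $n\leq 3$; but it does follow from the other term in the maximum, because $\delta\geq 1$ forces $(a-1)n+\delta+2\geq(a-1)n+3$, and the theorem assumes $b\geq(a-1)n+\delta+2$. I expect this bookkeeping to be essentially the only obstacle: all the substantive vanishing, projection-formula, and base-change arguments have already been carried out in Lemma \ref{lemma1}, Proposition \ref{propisomlogandh1}, and Lemma \ref{propisomh1XaC}, so no further computation is needed beyond verifying this inequality (and its $n=0$ analogue).
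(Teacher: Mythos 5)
Your proposal is correct and is essentially identical to the paper's own proof, which is exactly the three-step chain Remark \ref{remarkh0rho1} $\rightarrow$ Proposition \ref{propisomlogandh1} $\rightarrow$ Lemma \ref{propisomh1XaC}. One small correction to your bookkeeping: the condition $b\geq(a-1)n+3$ of Lemma \ref{propisomh1XaC} is not the only place the $\delta$-dependence enters --- Remark \ref{remarkh0rho1} itself is only valid under the vanishing $H^{1}(X,\Omega^{1}_{X}(2K_{X}+2\widetilde{C}))=0$ of Corollary \ref{corolariovanishing2k+2c}, which is what actually forces $a\geq\delta+2$ and $b\geq(a-1)n+\delta+2$ in the hypotheses; but since you invoke the remark under the theorem's full assumptions, this does not affect the validity of your argument.
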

\begin{proof}
    By Remark~\ref{remarkh0rho1}, we have
$\operatorname{corank} H^{0}(\rho)
    = h^{1}(X,\,\Omega^{1}_{X}(\log \widetilde{C})(2K_{X}+\widetilde{C}).$
Moreover, by Proposition~\ref{propisomlogandh1},
\[
    h^{1}(X,\Omega^{1}_{X}(\log \widetilde{C})(2K_{X}+\widetilde{C}))
    = h^{1}(C,\,\mathcal{O}_{\mathbb{F}_{n}}(2K_{\mathbb{F}_{n}}+C)\!\mid_{C}).
\]
Finally, Proposition~\ref{propisomh1XaC} yields
\[
h^{1}\!\left(C,\mathcal{O}_{C}(2K_{\mathbb{F}_{n}}+C)\right)
    = h^{0}\!\left(\mathbb{F}_{n},\mathcal{O}_{\mathbb{F}_{n}}(-K_{\mathbb{F}_{n}})\right),
\]
which proves the theorem.
\end{proof}
\section{Surjectivity of Gaussian map $\Phi_{X,\mathcal{O}_{X}(K_{X}+\widetilde{C})}$ on the Blow-up surface}
\label{sec:surjectivity}
\subsection{Strategy}
In this section we are going to show the surjectivity of $\Phi_{X,\mathcal{O}_{X}(K_{X}+\widetilde{C})}$. For that we require decompose a certain divisor on $X$ as a sum of three very ample divisor; but with such decomposition necessarily will increase the genus of $\widetilde{C}$.\\
     
     Let $\pi: Y:=Bl_{\Delta}(X \times X ) \rightarrow X\times X$ be the blow-up of $X\times X$ along the diagonal $\Delta\subset X\times X$. Let $\Lambda\subset Y$ be the exceptional divisor. For any coherent sheaf $\mathcal{G}$ on $X$ let us denote by $(\mathcal{G})_{i}=(pr_{i}\circ\pi)^{*}\mathcal{G}$, where $Y\xrightarrow{\pi}X\times X\xrightarrow{pr_{i}}X$, $pr_{i}$ is the $i-$th projection, $i=1,2$. Also, for every divisor $D$ in $X$, we define a divisor in $Y$, as $(D)_{i}=(pr_{i}\circ\pi)^{*}D$. By \cite{MR1201392}, it is well known that a sufficient condition for the surjectivity of $\Phi_{X,\mathcal{O}_{X}(K_{X}+\widetilde{C})}$ is the vanishing of 
     \begin{equation}
         H^{1}(X\times X, pr_{1}^{*}(K_{X}+\widetilde{C})\otimes pr_{2}^{*}(K_{X}+\widetilde{C})\otimes \mathcal{I}_{\Delta}^{2})\cong H^{1}(Y, (K_{X}+\widetilde{C})_{1}+(K_{X}+\widetilde{C})_{2} -2  \Lambda).
     \end{equation}
     Since $K_{Y}\cong \pi^{*}K_{X\times X}+\Lambda$ and $\pi^{*}K_{X\times X}=\pi^{*}(pr_{1}^{*}K_{X}+pr_{2}^{*}K_{X})$ then $K_{Y}\cong(K_{X})_{1}+(K_{X})_{2}+\Lambda$, and 
     \begin{equation*}
         (K_{X}+\widetilde{C})_{1}+(K_{X}+\widetilde{C})_{2} -2  \Lambda= K_{Y}+(\widetilde{C})_{1}+(\widetilde{C})_{2} -3\Lambda.
     \end{equation*}
     Let $\mathcal{L}=\mathcal{O}_{Y}(K_{Y}+(\widetilde{C})_{1}+(\widetilde{C})_{2} -3\Lambda)$, therefore we want to prove that:
     \begin{equation}
         H^{1}(Y,\mathcal{L})=0.
     \end{equation}
\subsection{Very Ample Decompositions}
We define 
\begin{equation*} 
\begin{split}
A & =  \left[\frac{a}{3}\right] C_{0}+ \left[\frac{b}{3}\right] F\\
B & = \left(a- 2 \left[\frac{a}{3}\right]\right) C_{0}+ \left(b-2\left[\frac{b}{3}\right]\right) F\\
M & =\widetilde{C}-E=\sigma^{*}C-3E,
\end{split}
\end{equation*}
where $[r]$ denotes the floor function of a real number $r$.\\
Then 
\begin{equation}\label{M as sum}
    M= 2(\sigma^{*}A-E)+(\sigma^{*}B-E)
\end{equation}
 By the Reider's criterion \cite{MR932299}, the next lemma gives numerical conditions that guarantee the very ampleness of the divisor $\sigma^{*}A - E$ on $X$. For $\delta = 1$, these conditions can be weakened, since $X$ is a toric surface arising as the monoidal transformation of $\mathbb{F}_{n}$, in this case, it suffices to verify ampleness, which follows from the Nakai–Moishezon criterion.
\begin{lemma} \label{lemmaA}
    If $a\geq 6$ and $b\geq \max\{(a+3)n,6\delta-3n+3\}$,
then $\sigma^{*}A-E$ is a very ample divisor on $X$.
\end{lemma}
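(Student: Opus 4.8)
The plan is to deduce the very ampleness of $\sigma^{*}A-E$ from Reider's criterion \cite{MR932299}. Since $K_{X}=\sigma^{*}K_{\mathbb{F}_{n}}+E$, I first rewrite the divisor in adjoint form:
\[
\sigma^{*}A-E=K_{X}+L,\qquad L:=\sigma^{*}(A-K_{\mathbb{F}_{n}})-2E,
\]
so that, setting $D_{0}:=A-K_{\mathbb{F}_{n}}=\left(\left[\frac{a}{3}\right]+2\right)C_{0}+\left(\left[\frac{b}{3}\right]+n+2\right)F$, we have $L=\sigma^{*}D_{0}-2E$. Reider's theorem then yields very ampleness of $K_{X}+L$ as soon as $L$ is nef, $L^{2}\geq 10$, and no effective divisor $D$ on $X$ has numerical type $(L\cdot D,\,D^{2})\in\{(0,-1),(0,-2),(1,0),(1,-1),(2,0)\}$. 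The entire proof amounts to verifying these three inputs.

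The self-intersection bound is routine: from $\sigma^{*}D_{0}\cdot E=0$ and $E^{2}=-\delta$ one gets $L^{2}=D_{0}^{2}-4\delta=\left(\left[\frac{a}{3}\right]+2\right)\bigl(2\left[\frac{b}{3}\right]+4-n\left[\frac{a}{3}\right]\bigr)-4\delta$, and the hypotheses $a\geq 6$ (forcing the first factor to be $\geq 4$) and $b\geq\max\{(a+3)n,6\delta-3n+3\}$ (forcing the second factor to be positive and to dominate $4\delta$) give $L^{2}\geq 10$.

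The nefness of $L$ is the heart of the matter. Against each exceptional curve, $L\cdot E_{j}=-2E_{j}^{2}=2\geq 0$. For the strict transform $\widetilde{\Gamma}=\sigma^{*}\Gamma-\sum_{j}m_{j}E_{j}$ of an irreducible curve $\Gamma\equiv cC_{0}+dF$ on $\mathbb{F}_{n}$ (with $m_{j}=\operatorname{mult}_{p_{j}}\Gamma$) the projection formula gives $L\cdot\widetilde{\Gamma}=D_{0}\cdot\Gamma-2\sum_{j}m_{j}$, so I must show $D_{0}\cdot\Gamma\geq 2\sum_{j}m_{j}$. The three parts of the hypothesis are precisely tuned to this. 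The bound $b\geq(a+3)n$, together with the $p_{j}\notin C_{0}$ of the ambient hypotheses, makes $D_{0}\cdot C_{0}=\left[\frac{b}{3}\right]+2-n-n\left[\frac{a}{3}\right]\geq 0$ and more generally renders $D_{0}$ positive on $\mathbb{F}_{n}$. The assumption $a\geq 6$ handles fibers: with the nodes on distinct fibers each fiber carries at most one $p_{j}$, whence $L\cdot\widetilde{F}=\left[\frac{a}{3}\right]\geq 2$. Finally, $b\geq 6\delta-3n+3$ is calibrated for the genuinely dangerous curves, the sections $\Gamma\equiv C_{0}+dF$, which, being irreducible with $\Gamma\cdot F=1$, have all $m_{j}\leq 1$ by the genus inequality $\sum_{j}\binom{m_{j}}{2}\leq p_{a}(\Gamma)=0$, and may pass through all $\delta$ nodes: then $\sum m_{j}\leq\delta$ while $D_{0}\cdot\Gamma\geq\left[\frac{b}{3}\right]+n+2\geq 2\delta$. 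The remaining classes are treated uniformly by the same genus inequality bounding $\sum m_{j}$ in terms of $\Gamma^{2}$ and $\delta$, which is beaten by the large coefficient $\left[\frac{b}{3}\right]+n+2$ appearing in $D_{0}\cdot\Gamma$.

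Finally I would apply Reider's dichotomy: if $\sigma^{*}A-E$ were not very ample there would be an effective $D$ of one of the five exceptional types, each with $L\cdot D\leq 2$. Writing $D=\sigma^{*}(cC_{0}+dF)-\sum_{j}m_{j}E_{j}$ and using $L\cdot D=D_{0}\cdot(cC_{0}+dF)-2\sum m_{j}\leq 2$, the positivity of $D_{0}$ confines $\sigma_{*}D$ to a very short list (a fiber, the section $C_{0}$, or a divisor supported on $E$); I then rule out each possibility by recomputing the pair $(L\cdot D,D^{2})$ against the numerical bounds and contradicting the list. The main obstacle throughout is the nefness and exceptional-divisor analysis of the previous paragraph: it is there that the node configuration (distinct fibers, off $C_{0}$) and the sharp lower bound $b\geq 6\delta-3n+3$ must conspire to overcome the $-2E$ contribution, whereas the reduction to adjoint form and the estimate $L^{2}\geq 10$ are essentially mechanical.
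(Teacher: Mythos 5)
Your overall route coincides with the paper's: both pass to the adjoint form $\sigma^{*}A-E=K_{X}+N$ with $N=\sigma^{*}(A-K_{\mathbb{F}_{n}})-2E$, verify $N^{2}=D_{0}^{2}-4\delta\geq 10$ by the same factorization, and then run Reider's criterion by estimating $N\cdot\Gamma$ on irreducible curves $\Gamma=\sigma^{*}(cC_{0}+dF)-\sum_{j}m_{j}E_{j}$. The computations you do carry out (the $E_{j}$, fiber, and $C_{0}$ cases, and the sections $C_{0}+dF$) are correct and match the paper.

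The gap is in the sentence claiming that ``the remaining classes are treated uniformly by the same genus inequality.'' For $c\geq 2$ the inequality $\sum_{j}\binom{m_{j}}{2}\leq p_{a}(\Gamma)$ controls $\sum_{j}m_{j}(m_{j}-1)$, which is allowed to grow like $\Gamma^{2}$, i.e.\ quadratically in $(c,d)$, whereas $D_{0}\cdot\Gamma$ is only linear in $(c,d)$. Converting this into the needed bound $2\sum_{j}m_{j}\leq D_{0}\cdot\Gamma-3$ therefore requires a Cauchy--Schwarz/AM--GM argument balancing $\sqrt{2\delta\,p_{a}(\Gamma)}$ against $\left(\left[\frac{a}{3}\right]+2\right)(d-cn)+c\left(\left[\frac{b}{3}\right]+n+2\right)$; you neither carry this out nor is its success evident from the hypotheses as stated. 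The paper avoids this entirely with a much simpler multiplicity bound: since the nodes lie on distinct fibers, $\sigma^{*}F-E_{j}$ is the class of an irreducible curve, so $\Gamma\cdot(\sigma^{*}F-E_{j})\geq 0$ gives $m_{j}\leq c$ for every $j$; hence $\sum_{j}2m_{j}\leq 2\delta c$, and $c\left(\left[\frac{b}{3}\right]+n+2\right)\geq c(2\delta+3)$ (from $b\geq 6\delta-3n+3$) yields $N\cdot\Gamma\geq 3$ at once. This is the one idea your sketch is missing, and it is also where the ``distinct fibers'' hypothesis enters. Secondarily, your final paragraph only announces that the five Reider exceptional pairs will be excluded; the paper's observation that $N\cdot\Gamma\geq 2$ for \emph{every} irreducible curve reduces this immediately to the single case $(N\cdot D,D^{2})=(2,0)$, which it then eliminates by a short explicit case analysis ($D$ a fiber class, a section class, or an exceptional curve). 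You should either supply the omitted Cauchy--Schwarz estimate or replace it with the fiber-multiplicity bound.
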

\begin{proof}
   Let us define $N:=\sigma^{*}A-E-K_{X}$, as $K_{X}=\sigma^{*}K_{\mathbb{F}_{n}}+E$, then 
\begin{equation*}
\begin{split}
N & = \sigma^{*}(A-K_{\mathbb{F}_{n}})-2E \\
 & = \left( \left[\frac{a}{3}\right]+2\right)\sigma^{*}C_{0}+ \left( \left[\frac{b}{3}\right]+n+2\right)\sigma^{*}F-2E
\end{split}
\end{equation*}
We want to prove that $N+K_{X}$ is a very ample divisor, so we will use the Reider's criterion as follow: first, we will see that $N^{2}\geq 10$, then we will show that there is no effective divisor $D$ such that any of the following conditions hold:
 \begin{enumerate}[label=(\roman*)]
     \item $N.D=0$ and $D^{2}=-1$ or $-2$,
      \item $N.D=1$ and $D^{2}=0$ or $-1$,
       \item $N.D=2$ and $D^{2}=0$.
 \end{enumerate}
Note that, 
\begin{equation*}
\begin{split}
N^{2} & = \left(\left[\frac{a}{3}\right]+2\right)^{2}(\sigma^{*}C_{0})^{2}+\left(\left[\frac{b}{3}\right]+n+2\right)^{2}(\sigma^{*}F)^{2}+4E^{2}+ \\
& \hspace{0.3cm}+ 2 \left(\left[\frac{a}{3}\right]+2\right)\left(\left[\frac{b}{3}\right]+n+2\right)(\sigma^{*}C_{0}).(\sigma^{*}F)-4\left(\left[\frac{a}{3}\right]+2\right)(\sigma^{*}C_{0}).E-4\left(\left[\frac{b}{3}\right]+n+2\right)(\sigma^{*}F).E \\
 &=   \left(\left[\frac{a}{3}\right]+2\right)^{2}(-n)+ (-4\delta)+2 \left(\left[\frac{a}{3}\right]+2\right)\left(\left[\frac{b}{3}\right]+n+2\right)\\
& = \left(\left[\frac{a}{3}\right]+2\right)\left(2\left[\frac{b}{3}\right]+2n+4-\left[\frac{a}{3}\right]n-2n\right)-4\delta \\
 & \geq  4\left(2\left[\frac{b}{3}\right]+2n+4-\left[\frac{a}{3}\right]n-2n\right)-4\delta = 8\left[\frac{b}{3}\right]-4\left[\frac{a}{3}\right]n-4\delta+16\geq 10.
\end{split}
\end{equation*}
The last inequality is because $\left[ \frac{a}{3} \right]\geq 2$ and $4\left[\frac{b}{3}\right]\geq 2\left[\frac{a}{3}\right]n+2\delta-3$ by hypothesis.\\

Now, let us consider $\Gamma$ any reduced irreducible curve on $X$ of the form
$$\Gamma= \alpha\sigma^{*}C_{0}+\beta\sigma^{*}F-\sum_{j=1}^{\delta}n_{j}E_{j}$$
then we have the following cases:
\begin{itemize}
    \item If $\alpha=0$ then $\Gamma \in \{\sigma^{*}F, \sigma^{*}F-E_{j}, E_{j}\}$. Thus
 \begin{equation*}
    N.\Gamma = \left\{ \begin{array}{lcc} N.\sigma^{*}F=\left[\frac{a}{3}\right]+2 &  &  \\ N.E_{j}=2 &  & \\ N.(\sigma^{*}F-E_{j})=\left[\frac{a}{3}\right]&  & \end{array} \right.
\end{equation*}
and $N.\Gamma\geq2$ for any $\Gamma\in\{\sigma^{*}F, \sigma^{*}F-E_{j}, E_{j}\}$.
\item We recall that $p_{j}\notin C_{0}$ then, if $\beta=0$ then $\Gamma \in \{\sigma^{*}C_{0}, E_{j}\}$. Thus
 \begin{equation*}
    N.\Gamma = \left\{ \begin{array}{lcc} N.\sigma^{*}C_{0}=\left[\frac{b}{3}\right]-\left[\frac{a}{3}\right]n-n+2 &  & \\ N.E_{j}=2&  & \end{array} \right.
\end{equation*}
and $N.\Gamma\geq 2$ since $\left[\frac{b}{3}\right]\geq\left[\frac{a}{3}\right]n+n$ by hypothesis.
\item If $\alpha\neq 0$ and $\beta\neq 0$, from \cite{MR463157} we have that $\beta\geq\alpha n$. Furthermore, as $\Gamma$ and $\sigma^{*}F-E_{j}$ are distinct irreducible curves, then $\Gamma.(\sigma^{*}F-E_{j})\geq 0$, that is, $\alpha\geq n_{j}$ for all $j=1,\dots,\delta.$
Therefore, 
\begin{equation*}
    \begin{split}
        N.\Gamma & = \left(\left[\frac{a}{3}\right]+2\right)(\beta-\alpha n)+ \alpha \left(\left[\frac{b}{3}\right]+n+2\right)-\sum_{j=1}^{\delta}2n_{j}\,.
    \end{split}
\end{equation*}
By hypothesis, $\left[\frac{b}{3}\right]\geq 2\delta-n+1$ this implies that  $\alpha\left(\left[\frac{b}{3}\right]+n+2\right)\geq \left(\sum_{j=1}^{\delta}2\alpha\right)+3\alpha\geq \left(\sum_{j=1}^{\delta}2\alpha\right)+3$, and $\left(\sum_{j=1}^{\delta}2\alpha\right)\geq \left(\sum_{j=1}^{\delta}2n_{j}\right)$ then $\alpha \left(\left[\frac{b}{3}\right]+n+2\right)-\sum_{j=1}^{\delta}2n_{j}\geq 3$ and, in particular, we obtain that $N.\Gamma\geq 2$.
\end{itemize}
Thus, $N.\Gamma\geq 2$ for any reduced irreducible curve $\Gamma$ on $X$, this tells us that for any effective divisor $D$ on $X$ (that is, a sum of reduced irreducible curves), we have that $N.D\geq 2$. Now, we will focus only on case (iii) $N.D=2$ and $D^{2}=0$.\\
Suppose that there exists an effective divisor $D$ on $X$ such that $N.D=2$ then $D$ is a reduced irreducible curve of the form
$$D= \alpha\sigma^{*}C_{0}+\beta\sigma^{*}F-\sum_{j=1}^{\delta}n_{j}E_{j}.$$
Note that if $\alpha\neq 0$ then $\alpha\geq n_{j}$ for all $j=1,\dots,\delta.$. We consider the following cases:
\begin{itemize}
 \item Case $\alpha\geq1$ and $\beta\geq \alpha n$: \\ By hypothesis,  $\left[\frac{b}{3}\right]\geq 2\delta-n+1$ this implies $\alpha\left(\left[\frac{b}{3}\right]+n+2\right)\geq \left(\sum_{j=1}^{\delta}2\alpha\right)+3\alpha\geq \left(\sum_{j=1}^{\delta}2n_{j}\right)+3$. Then
             \begin{equation*}
                 \begin{split}
                   N.D & = \left(\left[\frac{a}{3}\right]+2\right)(\beta-\alpha n)+ \alpha \left(\left[\frac{b}{3}\right]+n+2\right)-\sum_{j=1}^{\delta}2n_{j}\\
                   &\geq \alpha \left(\left[\frac{b}{3}\right]+n+2\right)-\sum_{j=1}^{\delta}2n_{j}\geq 3 >2.
                 \end{split}
             \end{equation*}
 \item Case $\alpha=1$ and $\beta=0$: \\ In this case, $D$ takes the form $\sigma^{*}C_{0}-\sum_{j=1}^{\delta}n_{j}E_{j}$, where $n_{j}\leq 1$ for all $j=1,\dots,\delta$. Since $D^{2}=0$ then $D^{2}=-n-\sum_{j=1}^{\delta}(n_{j}^{2})=0$, which implies $n=0$ and $n_{j}=0$ for all $j$. Consequently, $N.D=\left[\frac{b}{3}\right]+2>2.$
 \item  Case $\alpha=0$ and $\beta=1$: \\ Here $D=\sigma^{*}F$ or $D=\sigma^{*}F-E_{j_{1}}$ for some $j_{1}\in\{1,\dots,\delta\}$.
       \begin{itemize}
         \item If $D=\sigma^{*}F$, then $N.\sigma^{*}F=\left[\frac{a}{3}\right]+2\geq 4>2$.
         \item If $D=\sigma^{*}F-E_{i}$, then $N.(\sigma^{*}F-E_{i})=\left[\frac{a}{3}\right]\geq 2$ but $(\sigma^{*}F-E_{i})^{2}=E_{i}^{2}=-1\neq 0$.
        \end{itemize}
 \item Case $\alpha=0$ and $\beta=0$:\\ Then $D=E_{j_{1}}$ for some $j_{1}\in \{1,\dots,\delta\}$ and $N.E_{i}=2$, however, $E_{i}^{2}=-1\neq 0$.      
\end{itemize}
Hence, there is no effective divisor $D$ on $X$ such that $N.D=2$ and $D^{2}=0$. Therefore, $\sigma^{*}A-E$ is a very ample divisor on $X$ by Reider's theorem (criterion). 
\end{proof}
Similarly to the proof of Lemma \ref{lemmaA}, we can obtain the following.
\begin{lemma} \label{lemaB}
    If $a\geq 6$ and $b\geq \max\{(a+7)n,6\delta-3n+3\}$,
then $\sigma^{*}B-E$ is a very ample divisor on $X$.
\end{lemma}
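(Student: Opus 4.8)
The plan is to follow verbatim the structure of the proof of Lemma~\ref{lemmaA}, replacing the coefficients $\left[\frac{a}{3}\right]$ and $\left[\frac{b}{3}\right]$ of $A$ by the coefficients $a'=a-2\left[\frac{a}{3}\right]$ and $b'=b-2\left[\frac{b}{3}\right]$ of $B$. First I would set $N:=\sigma^{*}B-E-K_{X}$ and, using $K_{X}=\sigma^{*}K_{\mathbb{F}_{n}}+E$ and $K_{\mathbb{F}_{n}}=-2C_{0}-(2+n)F$, rewrite it as
\[
N=\left(a'+2\right)\sigma^{*}C_{0}+\left(b'+n+2\right)\sigma^{*}F-2E .
\]
This is formally identical to the divisor $N$ appearing in Lemma~\ref{lemmaA}, with $\left[\frac{a}{3}\right]$ and $\left[\frac{b}{3}\right]$ everywhere replaced by $a'$ and $b'$. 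Hence the entire Reider computation --- the bound $N^{2}\geq 10$, the verification that $N\cdot\Gamma\geq 2$ for every reduced irreducible curve $\Gamma$ on $X$, and the case analysis ruling out an effective divisor $D$ with $N\cdot D=2$ and $D^{2}=0$ --- transfers unchanged, \emph{provided} the numerical inequalities actually used there hold for $a'$ and $b'$, namely $a'\geq 2$, $b'\geq (a'+1)n$ and $b'\geq 2\delta-n+1$.

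The second step is to deduce these three inequalities from the hypotheses $a\geq 6$ and $b\geq\max\{(a+7)n,\,6\delta-3n+3\}$. For $a\geq 6$ one has $\left[\frac{a}{3}\right]\geq 2$, and since $a'=\left[\frac{a}{3}\right]+(a\bmod 3)\geq\left[\frac{a}{3}\right]$, also $a'\geq 2$. For the last inequality, $b\geq 6\delta-3n+3=3(2\delta-n+1)$ gives $b'\geq\left[\frac{b}{3}\right]\geq 2\delta-n+1$, exactly as in Lemma~\ref{lemmaA}.

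The delicate point --- and the reason the bound on $b$ is raised from $(a+3)n$ to $(a+7)n$ --- is the inequality $b'\geq(a'+1)n$, which governs the cases $\beta=0$ and $\alpha,\beta\neq 0$ of the Reider argument. Here one must control $a'$ from above: whereas the $C_{0}$-coefficient of $A$ is $\left[\frac{a}{3}\right]$, the $C_{0}$-coefficient of $B$ is $a'=\left[\frac{a}{3}\right]+(a\bmod 3)$, larger by as much as $2$. I would argue by the residue of $a$ modulo $3$: writing $a=3k+r$ with $r\in\{0,1,2\}$ one has $\left[\frac{a}{3}\right]=k$ and $a'=k+r$, so $a'+1\leq k+3$, while $b\geq(a+7)n=(3k+r+7)n\geq 3(k+3)n$ forces $\left[\frac{b}{3}\right]\geq(k+3)n\geq(a'+1)n$, whence $b'\geq(a'+1)n$. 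A short check over the three residues confirms that the worst case $r=2$, where $3(k+3)=a+7$, is exactly the one dictating the constant $7$.

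Finally, with $a'\geq 2$ together with $b'\geq(a'+1)n$ and $b'\geq 2\delta-n+1$ one recovers
\[
N^{2}=\left(a'+2\right)\left(2b'+4-a'n\right)-4\delta\geq 4\delta+20\geq 10,
\]
using $2b'\geq (a'+1)n+(2\delta-n+1)=a'n+2\delta+1$ for the middle factor, and the computation of $N\cdot\Gamma\geq 2$ and the exclusion of the case $N\cdot D=2$, $D^{2}=0$ proceed exactly as in Lemma~\ref{lemmaA}. Reider's criterion then yields that $N+K_{X}=\sigma^{*}B-E$ is very ample. The only genuine work, and the sole place where the argument departs from a literal copy of Lemma~\ref{lemmaA}, is the bookkeeping over $a\bmod 3$ needed to certify $b'\geq(a'+1)n$; everything else is a transcription.
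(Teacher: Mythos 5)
Your proposal is correct and follows exactly the route the paper intends: the paper's own ``proof'' of Lemma~\ref{lemaB} is the single sentence ``Similarly to the proof of Lemma~\ref{lemmaA}'', and your write-up is precisely the fleshed-out version of that analogy, correctly isolating the three inequalities $a'\geq 2$, $b'\geq(a'+1)n$, $b'\geq 2\delta-n+1$ that the Reider computation actually consumes and explaining why the constant rises from $3$ to $7$. One small slip: the displayed chain $(3k+r+7)n\geq 3(k+3)n$ is false for $r\in\{0,1\}$ (when $n>0$), so you cannot uniformly reduce to the bound $a'+1\leq k+3$; the correct uniform statement is $3(a'+1)n=(3k+3r+3)n\leq(3k+r+7)n=(a+7)n$, which holds for every $r\in\{0,1,2\}$ because $2r\leq 4$, and your closing remark about checking the three residues already points at this repair, so nothing essential is missing.
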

Next lemma shows how to construct a big and nef divisor on $Y$ from a very ample divisor on $X$, and its proof is an adaptation of Claim $3.3$ of \cite{MR1092845}.
\begin{lemma} \label{ClaimBEL}
    If $D$ is a very ample divisor on $X$ then $(D)_{1}+(D)_{2}-\Lambda$ is a big and nef divisor on $Y$.
\end{lemma}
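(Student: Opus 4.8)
The plan is to realize $L:=(D)_{1}+(D)_{2}-\Lambda$ as the pullback of the Plücker polarization under a \emph{secant-line} morphism $Y\to\mathbb{G}$, following and adapting Claim~3.3 of \cite{MR1092845}. First I would use that $D$ is very ample to embed $X\hookrightarrow\mathbb{P}^{N}$ by the complete linear system $|D|$, where $V:=H^{0}(X,\mathcal{O}_{X}(D))$ and $N=h^{0}(X,\mathcal{O}_{X}(D))-1$. For a pair of distinct points $(x,y)\in (X\times X)\setminus\Delta$ the images span a unique line $\overline{xy}\subset\mathbb{P}^{N}$, which defines a rational map $s:X\times X\dashrightarrow\mathbb{G}:=\mathbb{G}(1,N)$ to the Grassmannian of lines, regular exactly away from the diagonal $\Delta$.

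The key step is to show that $s$ extends to a genuine morphism $\psi:Y\to\mathbb{G}$ on the blow-up $Y=\mathrm{Bl}_{\Delta}(X\times X)$, and that $\psi^{*}\mathcal{O}_{\mathbb{G}}(1)\cong\mathcal{O}_{Y}(L)$. The global sections of the Plücker bundle $\mathcal{O}_{\mathbb{G}}(1)$ are the elements of $\bigwedge^{2}V$, and a decomposable element $f\wedge g$ pulls back under $s$ to the section $f\boxtimes g-g\boxtimes f$ of $pr_{1}^{*}D\otimes pr_{2}^{*}D$, which vanishes along $\Delta$. Hence $\bigwedge^{2}V$ injects into $H^{0}(X\times X,\,pr_{1}^{*}D\otimes pr_{2}^{*}D\otimes\mathcal{I}_{\Delta})\cong H^{0}(Y,L)$ after pulling back to $Y$ and dividing by the equation of $\Lambda$. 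One checks that the base locus of $\{f\boxtimes g-g\boxtimes f\}$ on $X\times X$ is exactly $\Delta$: for $x\neq y$ very ampleness of $D$ produces a hyperplane through $x$ missing $y$, so some $f\wedge g$ is nonzero at $(x,y)$. Along the exceptional divisor $\Lambda=\mathbb{P}(N_{\Delta/X\times X})=\mathbb{P}(T_{X})$ the limiting lines are the embedded tangent lines of $X\subset\mathbb{P}^{N}$, and the fact that $D$ is very ample, hence separates tangent vectors, guarantees these are well defined and base-point-free after the twist by $-\Lambda$. Thus $|L|$ is base-point-free and $\psi=\varphi_{|L|}$ is a morphism with $\psi^{*}\mathcal{O}_{\mathbb{G}}(1)\cong\mathcal{O}_{Y}(L)$.

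With the morphism in hand the conclusion is immediate. Since $\mathcal{O}_{\mathbb{G}}(1)$ is ample and $\psi$ is a morphism, $L=\psi^{*}\mathcal{O}_{\mathbb{G}}(1)$ is nef, as the pullback of a nef divisor under a morphism is nef. For bigness I would verify that $\psi$ is generically finite onto its image: a general secant line of the surface $X\subset\mathbb{P}^{N}$ meets $X$ in only finitely many points, so only finitely many pairs $(x,y)$ lie over a general line, whence $\dim\psi(Y)=\dim Y=4$ and $L^{4}>0$. Therefore $L$ is big and nef.

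I expect the extension to a morphism along $\Lambda$, that is, the base-point-freeness of $|L|$ on the exceptional divisor, to be the main obstacle. This is precisely the point at which very ampleness of $D$ is used essentially: one must control the first-order behaviour of the sections $f\boxtimes g-g\boxtimes f$ transverse to $\Delta$ and confirm that, after subtracting $\Lambda$, they do not all vanish, which is equivalent to $D$ separating tangent directions on $X$. By contrast, the bigness step is routine once the generic finiteness of the secant map is established, and the nefness is formal.
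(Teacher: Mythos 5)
Your proposal is correct and follows essentially the same route as the paper: both realize $(D)_{1}+(D)_{2}-\Lambda$ as the pullback of the Pl\"ucker polarization under the secant-line morphism $Y\to \mathrm{Grass}(1,\mathbb{P}^{r})$ (adapting Claim 3.3 of the same reference), obtain nefness from the pullback and bigness from generic finiteness of that morphism, which holds since $X$ is not a linear subspace of $\mathbb{P}^{r}$. Your write-up simply supplies more detail on the identification $\psi^{*}\mathcal{O}_{\mathbb{G}}(1)\cong\mathcal{O}_{Y}(L)$ and on base-point-freeness along $\Lambda$ than the paper does.
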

\begin{proof}
    Consider the embedding $\phi_{D}:X \rightarrow \mathbb{P}(H^{0}(X,\mathcal{O}_{X}(D))^{*})=\mathbb{P}^{r}$ defined by $D$. We have a rational map $\varphi: X \times X \dashrightarrow G:=Grass(1,\mathbb{P}^{r})$,
    defined on the complement of the diagonal $\Delta$ en $X\times X$ by sending the pair $(x,y)$ to the line $\overline{\phi_{D}(x)\phi_{D}(y)}\subset\mathbb{P}^{r}$, this is called the secant line map.\\
Note that, $\varphi$ extends to a morphism $\overline{\varphi}$
       \begin{equation}
        \begin{split}
            \overline{\varphi}: Y=Bl_{\Delta}(X \times X) \rightarrow G
        \end{split}
    \end{equation}
 defined as follows: If $(x,y)\notin \Delta$, then $\overline{\varphi}(x,y)=\varphi(x,y)=\overline{\phi_{D}(x)\phi_{D}(y)}$. On $\Delta$, $\overline{\varphi}$ maps to the tangent line at $\phi_{D}(x)$ in the direction defined by the exceptional divisor.
 We can check that 
 \begin{equation}
     \begin{split}
         (D)_{1}+(D)_{2}-\Lambda &=\overline{\varphi}^{*}\mathcal{O}_{G}(1),
     \end{split}
 \end{equation}
 where $\mathcal{O}_{G}(1)$ is the positive generator of Pic($G$).\\
Now, unless $X$ is a linear subspace of $\mathbb{P}^{r}$, the map $\overline{\varphi}$ is generically finite; the fiber over a general point $l=\overline{xy}$ in the image 
will be positive-dimensional if and only if $l\subset X$ and the only variety that contains 
the line joining any two of its points is a linear subspace of $\mathbb{P}^{r}$.\\
Therefore $ (D)_{1}+(D)_{2}-\Lambda$ is big and nef divisor because it is the pullbcak of a big and nef divisor under a generically finite morphism.
\end{proof}

Note that the conditions of Lemma \ref{lemaB} satisfy the conditions of Lemma \ref{lemmaA}. We can apply Lemma \ref{ClaimBEL} and obtain the next theorem.
\begin{theorem} \label{theoremBigandNef}
      If $a\geq 6$ and $b\geq \max\{(a+7)n,6\delta-3n+3\}$,
then $(M)_{1}+(M)_{2}-3\Lambda$ is a big and nef divisor on $Y$.
\end{theorem}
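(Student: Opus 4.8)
The plan is to transport the very ample decomposition $M = 2(\sigma^{*}A-E)+(\sigma^{*}B-E)$ recorded in (\ref{M as sum}) up to $Y$, and to match its three very ample summands against the coefficient $3$ in front of $\Lambda$. Writing $D_{A}:=\sigma^{*}A-E$ and $D_{B}:=\sigma^{*}B-E$, the linearity of pullback along $pr_{i}\circ\pi$ gives $(M)_{i}=2(D_{A})_{i}+(D_{B})_{i}$ for $i=1,2$, and hence
\[
(M)_{1}+(M)_{2}-3\Lambda \;=\; 2\bigl[(D_{A})_{1}+(D_{A})_{2}-\Lambda\bigr]\;+\;\bigl[(D_{B})_{1}+(D_{B})_{2}-\Lambda\bigr].
\]
Equivalently, regarding $M = D_{A}+D_{A}+D_{B}$ as a sum of three very ample divisors, the class in question is exactly a sum of three expressions of the shape $(D)_{1}+(D)_{2}-\Lambda$, one for each very ample summand, which is precisely the input to Lemma \ref{ClaimBEL}.

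First I would confirm that the hypotheses force both $D_{A}$ and $D_{B}$ to be very ample. For $D_{B}$ the bounds $a\geq 6$ and $b\geq\max\{(a+7)n,\,6\delta-3n+3\}$ are exactly those of Lemma \ref{lemaB}. For $D_{A}$ it suffices to note that $(a+7)n\geq (a+3)n$, so the same bound on $b$ implies the weaker one required in Lemma \ref{lemmaA}; thus $D_{A}$ is very ample as well. With both divisors very ample, Lemma \ref{ClaimBEL} shows that each of $(D_{A})_{1}+(D_{A})_{2}-\Lambda$ and $(D_{B})_{1}+(D_{B})_{2}-\Lambda$ is a big and nef divisor on $Y$.

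The remaining step is formal cone theory on the fourfold $Y$. Nefness is preserved under nonnegative integer combinations, so the displayed sum is nef. For bigness I would invoke the standard fact that the sum of a big divisor and a nef divisor is again big (by Kodaira's lemma write a big class as ample $+$ effective; adding a nef class keeps the ample part ample). Since $2\bigl[(D_{A})_{1}+(D_{A})_{2}-\Lambda\bigr]$ is big, being a positive multiple of a big class, and $(D_{B})_{1}+(D_{B})_{2}-\Lambda$ is nef, the sum is big; hence $(M)_{1}+(M)_{2}-3\Lambda$ is big and nef. The substantive content has been front-loaded into Lemmas \ref{lemmaA}, \ref{lemaB}, and \ref{ClaimBEL}, so no serious obstacle remains: the only points requiring care are the bookkeeping that the three $-\Lambda$ contributions assemble to exactly $-3\Lambda$ and that the single numerical hypothesis simultaneously triggers both very-ampleness lemmas, together with the use of the dimension-independent closure properties of the big and nef cones (the surface criterion ``nef and $D^{2}>0$'' being unavailable since $Y$ is $4$-dimensional).
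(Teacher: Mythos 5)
Your proposal is correct and takes essentially the same route as the paper: both decompose $(M)_{1}+(M)_{2}-3\Lambda$ as $2\bigl[(\sigma^{*}A-E)_{1}+(\sigma^{*}A-E)_{2}-\Lambda\bigr]+\bigl[(\sigma^{*}B-E)_{1}+(\sigma^{*}B-E)_{2}-\Lambda\bigr]$ via (\ref{M as sum}), apply Lemmas \ref{lemmaA}, \ref{lemaB} and \ref{ClaimBEL}, and conclude that a sum of big and nef divisors is big and nef. Your added care (checking $(a+7)n\geq(a+3)n$ so one hypothesis triggers both lemmas, and justifying big $+$ nef $=$ big on the fourfold $Y$) only makes explicit what the paper leaves implicit.
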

\begin{proof}
By Lemmas \ref{lemmaA}, \ref{lemaB} and \ref{ClaimBEL}, both $(\sigma^{*}A-E)_{1}+(\sigma^{*}A-E)_{2}- \Lambda$ and $(\sigma^{*}B-E)_{1}+(\sigma^{*}B-E)_{2}- \Lambda$ are big and nef divisor on $Y$. By (\ref{M as sum}), 
 \begin{equation*}
 \begin{split}
       (M)_{1}+(M)_{2}-3\Lambda & = (\sigma^{*}(2A-B)-3E)_{1}+(\sigma^{*}(2A-B)-3E)_{2}-3\Lambda \\
     & =2\left((\sigma^{*}A-E)_{1}+(\sigma^{*}A-E)_{2}- \Lambda\right)+(\sigma^{*}B-E)_{1}+(\sigma^{*}B-E)_{2}- \Lambda
 \end{split}
 \end{equation*}
 is big and nef divisor on $Y$ since it is the sum of three big and nef divisors.
\end{proof}
 By Kawamata–Viehweg vanishing theorem, we obtain the following corollary.
\begin{corollary}\label{CoroVanishing}
    Under the conditions of Theorem \ref{theoremBigandNef}, 
$$H^{1}(Y, K_{Y}+(M)_{1}+(M)_{2}-3\Lambda)=0.$$
\end{corollary}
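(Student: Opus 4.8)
The plan is to invoke the Kawamata–Viehweg vanishing theorem directly, since the substantive work has already been completed in Theorem~\ref{theoremBigandNef}. First I would record that $Y = Bl_{\Delta}(X \times X)$ is a smooth projective variety: the surface $X$ is smooth, being the blow-up of the smooth Hirzebruch surface $\mathbb{F}_{n}$ at $\delta$ distinct reduced points, so the product $X \times X$ is smooth and projective, and $Y$ is obtained from it by blowing up the smooth diagonal $\Delta$. I would then set $L := (M)_{1}+(M)_{2}-3\Lambda$ and observe that the class whose cohomology we must control is exactly of the form $K_{Y}+L$.

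The single key input is that, under the standing hypotheses $a\geq 6$ and $b\geq \max\{(a+7)n,\,6\delta-3n+3\}$, Theorem~\ref{theoremBigandNef} asserts that $L$ is big and nef on $Y$. The Kawamata–Viehweg vanishing theorem states that for a big and nef divisor $L$ on a smooth projective variety $Y$ one has $H^{i}(Y, K_{Y}+L)=0$ for every $i\geq 1$. Specializing to $i=1$ gives
\[
H^{1}\bigl(Y,\, K_{Y}+(M)_{1}+(M)_{2}-3\Lambda\bigr)=0,
\]
which is precisely the claimed vanishing.

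I do not expect any genuine obstacle at this stage. The real difficulty — establishing that $(M)_{1}+(M)_{2}-3\Lambda$ is big and nef, via the decomposition $M=2(\sigma^{*}A-E)+(\sigma^{*}B-E)$, the very-ampleness of $\sigma^{*}A-E$ and $\sigma^{*}B-E$ obtained through Reider's criterion (Lemmas~\ref{lemmaA} and~\ref{lemaB}), and the secant-line construction of big and nef divisors on $Y$ (Lemma~\ref{ClaimBEL}) — was already dispatched in proving Theorem~\ref{theoremBigandNef}. The only two points I would make explicit in a full write-up are that $Y$ is smooth projective, so that the vanishing theorem is applicable, and that the divisor class in question is manifestly $K_{Y}$ plus a big and nef divisor; both are immediate from the constructions above, so the corollary follows at once.
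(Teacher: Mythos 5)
Your proposal is correct and matches the paper's argument exactly: the paper also deduces the corollary by applying the Kawamata--Viehweg vanishing theorem to $K_{Y}+L$ with $L=(M)_{1}+(M)_{2}-3\Lambda$ big and nef by Theorem~\ref{theoremBigandNef}. Your added remarks on the smoothness and projectivity of $Y$ are harmless and only make explicit what the paper leaves implicit.
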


Now, consider $Y_{j}\subset Y$ the proper transform of $E_{j}\times X$, and consider the divisors $R_{j}=\widetilde{C}-\sum_{i=1}^{j}E_{i}$ for each $j=1,\dots,\delta$.\\ Note that 
\begin{equation}
    \begin{split}
        R_{j} & = \sigma^{*}C-2E-\sum_{i=1}^{j}E_{i}= \sigma^{*}(2A+B)-2E-\sum_{i=1}^{j}E_{i}\\
        &=2\left(\sigma^{*}A-E\right)+ (\sigma^{*}B-\sum_{i=1}^{j}E_{i})\, .
    \end{split}
\end{equation}
Analogously to the proof of Lemma \ref{lemmaA}, we obtain the following.
\begin{lemma}\label{lemaB_i} If $a\geq 6$ and $b\geq \max\{(a+7)n,6j-3n+3\}$, then the divisors $\sigma^{*}B-\sum_{i=1}^{j}E_{i}$ are very ample divisors on $X$, for all $j=1,\dots,\delta$.
\end{lemma}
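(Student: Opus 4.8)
The plan is to follow the proof of Lemma~\ref{lemmaA} almost verbatim, with $B$ in place of $A$ and the partial exceptional sum $\sum_{i=1}^{j}E_i$ in place of the full $E$, and to deduce very ampleness from Reider's criterion \cite{MR932299}. First I would set
\[
N:=\left(\sigma^{*}B-\sum_{i=1}^{j}E_i\right)-K_X=\sigma^{*}(B-K_{\mathbb{F}_n})-E-\sum_{i=1}^{j}E_i,
\]
so that the divisor in question is $K_X+N$, and then verify the two Reider hypotheses for $K_X+N$ to be very ample: that $N^2\ge 10$, and that no effective divisor $D$ realizes one of the configurations (i) $N\cdot D=0$ with $D^2\in\{-1,-2\}$, (ii) $N\cdot D=1$ with $D^2\in\{0,-1\}$, or (iii) $N\cdot D=2$ with $D^2=0$.

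For the self-intersection I would expand $N^2$ as in Lemma~\ref{lemmaA}. Writing $P=a-2[\tfrac a3]+2$ for the coefficient of $\sigma^{*}C_0$, the pullback part contributes $P\left(2(b-2[\tfrac b3])+4-(a-2[\tfrac a3])n\right)$, while the exceptional part $-(E+\sum_{i\le j}E_i)$ contributes $-(3j+\delta)$, since the coefficient of $E_i$ there is $2$ for $i\le j$ and $1$ for $i>j$. The bound $N^2\ge 10$ should then reduce to the hypotheses $a\ge 6$ and $b\ge 6j-3n+3$, in the same way that $b\ge 6\delta-3n+3$ was used in Lemma~\ref{lemmaA}; the weaker threshold $6j$ reflects that only $j$ exceptional curves are subtracted.

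The main body of the argument is the case analysis over reduced irreducible curves $\Gamma=\alpha\sigma^{*}C_0+\beta\sigma^{*}F-\sum_i n_iE_i$, split according to whether $\alpha=0$, $\beta=0$, or $\alpha,\beta\ne 0$, using $p_i\notin C_0$, the relation $\beta\ge\alpha n$ of \cite{MR463157} for curves with $\alpha,\beta\ne0$, and $\alpha\ge n_i$ coming from $\Gamma\cdot(\sigma^{*}F-E_i)\ge0$. For the fibre and horizontal classes and for the subtracted exceptional divisors $E_1,\dots,E_j$ (on which $N\cdot E_i=2$) one gets $N\cdot\Gamma\ge2$, which excludes configurations (i) and (ii) for those curves and leaves only (iii), ruled out by the $D^2=0$ constraint exactly as before.

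The step I expect to be the real obstacle is the behaviour on the exceptional curves $E_{j+1},\dots,E_\delta$ that are \emph{not} subtracted: there $N\cdot E_i=1$ and $E_i^2=-1$, so each of them sits precisely in Reider configuration (ii), and the clean inequality $N\cdot\Gamma\ge2$ that governed every curve in Lemma~\ref{lemmaA} fails here. Indeed $\sigma^{*}B-\sum_{i=1}^{j}E_i$ has intersection $0$ with each $E_i$ for $i>j$, so one must explain carefully why these contracted curves do not obstruct the desired conclusion (for instance by separating the relevant length-two subschemes directly, or by adjusting the statement so that only the subtracted exceptional divisors play a role). Pinning down this point is where I would concentrate the effort, since it is the only genuine difference from Lemma~\ref{lemmaA}.
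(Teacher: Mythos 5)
Your proposal reproduces the only argument the paper offers for this lemma --- the paper gives no proof at all beyond the phrase ``analogously to the proof of Lemma~\ref{lemmaA}'' --- and the obstacle you isolate in your last paragraph is not a technical wrinkle to be worked around: it is fatal to the statement as written. For $1\le j<\delta$ one has
\[
\Bigl(\sigma^{*}B-\sum_{i=1}^{j}E_{i}\Bigr)\cdot E_{k}
=\sigma^{*}B\cdot E_{k}-\sum_{i=1}^{j}E_{i}\cdot E_{k}=0
\qquad\text{for every } k>j,
\]
since $\sigma^{*}B\cdot E_{k}=0$ and $E_{i}\cdot E_{k}=0$ for $i\le j<k$. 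A very ample divisor (indeed any ample divisor) must meet every irreducible curve positively, so $\sigma^{*}B-\sum_{i=1}^{j}E_{i}$ is not even ample on $X$ when $j<\delta$, and no strengthening of the numerical hypotheses on $a$ and $b$ can change this. Your Reider-theoretic reformulation of the same fact is accurate: $N=\sigma^{*}B-\sum_{i=1}^{j}E_{i}-K_{X}$ satisfies $N\cdot E_{k}=1$ and $E_{k}^{2}=-1$ for $k>j$, which is exactly the excluded configuration (ii), so the criterion cannot apply. Your computations of $N^{2}$ and of $N\cdot E_{i}=2$ for $i\le j$ are also correct; the only value of $j$ for which the lemma can hold is $j=\delta$, which is Lemma~\ref{lemaB}.

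So the right verdict is not that your proof has a gap but that the lemma does, and your closing suggestion that ``the statement must be adjusted'' is the correct instinct. A plausible repair is to work on the partial blow-up $X_{j}=\textnormal{Bl}_{\{p_{1},\dots,p_{j}\}}\mathbb{F}_{n}$, where $\sigma_{j}^{*}B-\sum_{i=1}^{j}E_{i}$ contracts nothing and the Lemma~\ref{lemmaA} argument does go through; but note that simply weakening the conclusion to ``big and nef'' after applying Lemma~\ref{ClaimBEL} does not save the paper's use of this lemma either, since for the proper transform $\gamma$ of $\{x\}\times E_{k}$ with $x\in E_{k}$ and $k>j$ one computes $\bigl((D)_{1}+(D)_{2}-\Lambda\bigr)\cdot\gamma=D\cdot E_{k}-1=-1$ with $D=\sigma^{*}B-\sum_{i=1}^{j}E_{i}$, and likewise $\bigl((R_{j})_{1}+(R_{j})_{2}-3\Lambda\bigr)\cdot\gamma=2-3=-1$. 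Hence Corollary~\ref{corolarioban} fails for $j<\delta$ as well, and the descending induction in Theorem~\ref{h1L=0} needs to be rebuilt on a correct intermediate statement rather than merely re-proved.
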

\begin{remark}\label{remark1}
 Therefore, if $a\geq 6$ and $b\geq \max\{(a+7)n,6\delta-3n+3\}$,
then $\left(\sigma^{*}A-E\right)$ and $ (\sigma^{*}B-\sum_{i=1}^{j}E_{i})$ are very ample divisors $X$ for all $j=1,\dots,\delta$.
\end{remark}
By Lemma \ref{ClaimBEL}, we obtain, for each $j=1,\dots,\delta$, that the divisors $(\sigma^{*}B-\sum_{i=1}^{j}E_{i})_{1}+(\sigma^{*}B-\sum_{i=1}^{j}E_{i})_{2}-\Lambda$ and
$\left(\sigma^{*}A-E\right)_{1}+ \left(\sigma^{*}A-E\right)_{2}-\Lambda$ are big and nef divisors on $Y$.\\
Thus, for each $j=1,\dots,\delta$, the restrictions of $(\sigma^{*}B-\sum_{i=1}^{j}E_{i})_{1}+(\sigma^{*}B-\sum_{i=1}^{j}E_{i})_{2}-\Lambda$ and $\left(\sigma^{*}A-E\right)_{1}+ \left(\sigma^{*}A-E\right)_{2}-\Lambda$ on $Y_{j}$ are nef divisors. Now, we will see that these restrictions are also big. \\
Let us recall from \cite{MR1760876} that if $D$ is very ample on X, then the linear system $|(D)_{1}+(D)_{2}-\Lambda|$ on $Y$ has a linear subspace that defines a morphism $Y \rightarrow Grass(1,\mathbb{P}H^{0}(X,\mathcal{O}_{X}(D))^{*})$, which associates the pair $(x,y)\in Y$ with the linear space generated by $\phi_{D}(x)$ and $\phi_{D}(y)$, where $\phi_{D}$ is the embedding of $X$ on $\mathbb{P}H^{0}(X,\mathcal{O}_{X}(D))^{*}$. \\
Since $\sigma^{*}A-E$ embeds $X$ so that $E_{j}$ is a line, if we assume that $(\sigma^{*}A-E)_{1}+(\sigma^{*}A-E)_{2}-\Lambda$ were not big on $Y_{j}$, then the restriction to $Y_{j}$ of the map $Y\rightarrow Grass(1,\mathbb{P}H^{0}(X,\mathcal{O}_{X}(D))^{*})$ would have no finite fiber, hence every chord joining a point of the line $\phi_{\sigma^{*}A-E}(E_{j})$ and a point of the image $\phi_{\sigma^{*}A-E}(X)$ would be contained in $\phi_{\sigma^{*}A-E}(X)$, that is $\phi_{\sigma^{*}A-E}(X)$ is a plane, this is a contradiction.\\ Similarly, for each $j=1,\dots,\delta$, the divisors $[(\sigma^{*}B-\sum_{i=1}^{j}E_{i})_{1}+(\sigma^{*}B-\sum_{i=1}^{j}E_{i})_{2}-\Lambda]|_{Y_{j}}$ are big and nef divisors on $Y_{j}$.\\

Applying Lemma \ref{lemaB_i} and Remark \ref{remark1}, we have the next corollary.
\begin{corollary} \label{corolarioban}
    If $a\geq 6$ and $b\geq \max\{(a+7)n,6\delta-3n+3\}$,
then for every $j=1,\dots,\delta$
\begin{equation*}
    \begin{split}
        (\widetilde{C}-\sum_{i=1}^{j}E_{i})_{1}+(\widetilde{C}-\sum_{i=1}^{j}E_{i})_{2}-3\Lambda
    \end{split}
\end{equation*}
is a big and nef divisor on $Y$. In addition, the restriction to $Y_{j}$,
\begin{equation*}
    \begin{split}
       [(\widetilde{C}-\sum_{i=1}^{j}E_{i})_{1}+(\widetilde{C}-\sum_{i=1}^{j}E_{i})_{2}-3\Lambda]|_{Y_{j}}
    \end{split}
\end{equation*}
is big and nef, for all $j=1,\dots,\delta$.\\
\end{corollary}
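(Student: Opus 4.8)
The plan is to recognize the divisor in question as a fixed nonnegative combination of the two ``chord'' divisors already studied, and then to invoke the elementary fact that a sum of big and nef divisors is big and nef. First I would recall the decomposition established immediately before the statement: writing $R_{j}=\widetilde{C}-\sum_{i=1}^{j}E_{i}$, we have
\[
R_{j}=2\left(\sigma^{*}A-E\right)+\left(\sigma^{*}B-\sum_{i=1}^{j}E_{i}\right).
\]
Pulling this back to $Y$ via the two projections and splitting $3\Lambda=2\Lambda+\Lambda$, one rewrites the divisor of interest as
\[
(R_{j})_{1}+(R_{j})_{2}-3\Lambda=2\Bigl[(\sigma^{*}A-E)_{1}+(\sigma^{*}A-E)_{2}-\Lambda\Bigr]+\Bigl[(\sigma^{*}B-\textstyle\sum_{i=1}^{j}E_{i})_{1}+(\sigma^{*}B-\sum_{i=1}^{j}E_{i})_{2}-\Lambda\Bigr],
\]
thereby exhibiting it as a sum of three divisors each of the shape $(D)_{1}+(D)_{2}-\Lambda$.

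For the global assertion I would then simply feed the two component divisors through the machinery of the previous section. By Lemma~\ref{lemmaA} the divisor $\sigma^{*}A-E$ is very ample under the stated numerical hypotheses, and by Lemma~\ref{lemaB_i} together with Remark~\ref{remark1} each $\sigma^{*}B-\sum_{i=1}^{j}E_{i}$ is very ample as well. Applying Lemma~\ref{ClaimBEL} to these very ample divisors shows that each of the two bracketed divisors above is big and nef on $Y$. Since a sum of big and nef divisors is again big and nef, the first claim follows exactly as in Theorem~\ref{theoremBigandNef}, now with $\sigma^{*}B-E$ replaced by $\sigma^{*}B-\sum_{i=1}^{j}E_{i}$.

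For the restriction to $Y_{j}$ I would argue that each of the two bracketed pieces restricts to a big and nef divisor on $Y_{j}$, after which restricting the displayed identity presents $[(R_{j})_{1}+(R_{j})_{2}-3\Lambda]\!\mid_{Y_{j}}$ as a sum of big and nef divisors on $Y_{j}$. Nefness is automatically preserved under restriction to the subvariety $Y_{j}$. For bigness one uses the geometric input recorded in the discussion preceding the statement: since $\sigma^{*}A-E$ embeds $E_{j}$ as a line, the secant-line map attached to $(\sigma^{*}A-E)_{1}+(\sigma^{*}A-E)_{2}-\Lambda$ remains generically finite upon restriction to $Y_{j}$ --- otherwise every chord joining the line $\phi_{\sigma^{*}A-E}(E_{j})$ to a point of $\phi_{\sigma^{*}A-E}(X)$ would lie on the image, forcing it to be a plane, a contradiction --- and the same reasoning applies verbatim with $\sigma^{*}B-\sum_{i=1}^{j}E_{i}$ in place of $\sigma^{*}A-E$.

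The only genuinely delicate step is the bigness of the restricted components on $Y_{j}$: the nefness and the global big-and-nef computation are formal consequences of Lemma~\ref{ClaimBEL} and additivity, whereas bigness on $Y_{j}$ rests on the fact that the secant-line morphism does not contract $Y_{j}$. This is precisely where the very ampleness of both component divisors, and the fact that $E_{j}$ is embedded as a line (so that the image is not a linear space), are indispensable.
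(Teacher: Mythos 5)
Your proposal is correct and follows essentially the same route as the paper: the decomposition $R_{j}=2(\sigma^{*}A-E)+(\sigma^{*}B-\sum_{i=1}^{j}E_{i})$, very ampleness of the components via Lemma~\ref{lemmaA}, Lemma~\ref{lemaB_i} and Remark~\ref{remark1}, Lemma~\ref{ClaimBEL} plus additivity of big and nef for the global statement, and the secant-line/generic-finiteness argument (using that $E_{j}$ is embedded as a line, so the image cannot be a linear space) for bigness of the restriction to $Y_{j}$. This matches the paper's derivation of Corollary~\ref{corolarioban}.
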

Now, we will prove that these numerical conditions are sufficient to have the surjectivity of $\Phi_{K_{X}+\widetilde{C}}$.
\subsection{Main Surjectivity Result}

\begin{theorem}\label{h1L=0}
As before, let $\mathcal{L}=K_{Y}+(\widetilde{C})_{1}+(\widetilde{C})_{2} -3\Lambda$. If the conditions of the Corollary \ref{corolarioban} are satisfied, then
    $$H^{1}(Y,\mathcal{O}_{Y}(\mathcal{L}))=0.$$
\end{theorem}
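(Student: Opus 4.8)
The plan is to start from the vanishing already proved for $M=\widetilde{C}-E$ and to reintroduce the exceptional divisors $E_{1},\dots,E_{\delta}$ one at a time, controlling the cohomology at each stage by restriction to the divisors $Y_{j}$. Writing $R_{j}=\widetilde{C}-\sum_{i=1}^{j}E_{i}$ as above (so $R_{0}=\widetilde{C}$ and $R_{\delta}=M$), set
\[
\mathcal{L}_{j}:=K_{Y}+(R_{j})_{1}+(R_{j})_{2}-3\Lambda,\qquad j=0,1,\dots,\delta,
\]
so that $\mathcal{L}_{0}=\mathcal{L}$ while $H^{1}(Y,\mathcal{O}_{Y}(\mathcal{L}_{\delta}))=0$ by Corollary \ref{CoroVanishing}. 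I would prove $H^{1}(Y,\mathcal{O}_{Y}(\mathcal{L}_{0}))=0$ by descending induction on $j$, passing from $\mathcal{L}_{j}$ to $\mathcal{L}_{j-1}=\mathcal{L}_{j}+(E_{j})_{1}+(E_{j})_{2}$ by peeling the two exceptional directions separately. Let $Y_{j}$ and $Y_{j}'$ denote the proper transforms of $E_{j}\times X$ and $X\times E_{j}$; since neither contains the diagonal, $Y_{j}$ and $Y_{j}'$ are the reduced irreducible divisors of classes $(E_{j})_{1}$ and $(E_{j})_{2}$ respectively, and $K_{Y_{j}}=(K_{Y}+Y_{j})\!\mid_{Y_{j}}$ by adjunction.

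The clean half of each step works as follows. Consider the intermediate divisor $\mathcal{M}_{j}=K_{Y}+(R_{j-1})_{1}+(R_{j})_{2}-3\Lambda$, for which $\mathcal{M}_{j}-Y_{j}=\mathcal{L}_{j}$. The restriction sequence
\[
0\to \mathcal{O}_{Y}(\mathcal{L}_{j})\to \mathcal{O}_{Y}(\mathcal{M}_{j})\to \mathcal{O}_{Y_{j}}(\mathcal{M}_{j}\!\mid_{Y_{j}})\to 0
\]
has kernel cohomology $H^{1}(Y,\mathcal{O}_{Y}(\mathcal{L}_{j}))=0$ by the inductive hypothesis, while by adjunction $\mathcal{M}_{j}\!\mid_{Y_{j}}=K_{Y_{j}}+[(R_{j})_{1}+(R_{j})_{2}-3\Lambda]\!\mid_{Y_{j}}$, and the twisting divisor is precisely the big and nef divisor on $Y_{j}$ furnished by Corollary \ref{corolarioban}; hence Kawamata--Viehweg vanishing gives $H^{1}(Y_{j},\mathcal{M}_{j}\!\mid_{Y_{j}})=0$, and therefore $H^{1}(Y,\mathcal{O}_{Y}(\mathcal{M}_{j}))=0$. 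Peeling the second exceptional direction then requires the sequence attached to $Y_{j}'$ with $\mathcal{L}_{j-1}-Y_{j}'=\mathcal{M}_{j}$, whose cokernel is $\mathcal{L}_{j-1}\!\mid_{Y_{j}'}=K_{Y_{j}'}+[(R_{j-1})_{1}+(R_{j})_{2}-3\Lambda]\!\mid_{Y_{j}'}$.

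The main obstacle is exactly this last cokernel. Along the pencil of curves $\widetilde{E_{j}\times\{q\}}\subset Y_{j}'$ (for $q\in E_{j}$) the divisor $(R_{j-1})_{1}+(R_{j})_{2}-3\Lambda$ has degree $\widetilde{C}\cdot E_{j}-3=2-3=-1$, so it fails to be nef and Kawamata--Viehweg does not apply directly. This failure is unavoidable for the symmetric bundle $\mathcal{L}$ and reflects the fact that $\widetilde{C}\cdot E_{j}=2<3$, which is precisely the reason the auxiliary divisor $M=\widetilde{C}-E$ (with $M\cdot E_{j}=3$) had to be introduced. To handle it I would argue directly on the threefold $Y_{j}'$, the blow-up of $X\times E_{j}$ along the graph of $E_{j}\hookrightarrow X$: using the projection $p:Y_{j}'\to X$ whose general fibre is a $\mathbb{P}^{1}$ on which $\mathcal{L}_{j-1}\!\mid_{Y_{j}'}$ has degree $\geq -1$, one shows $R^{1}p_{\ast}(\mathcal{L}_{j-1}\!\mid_{Y_{j}'})=0$ and reduces the vanishing of $H^{1}(Y_{j}',\mathcal{L}_{j-1}\!\mid_{Y_{j}'})$ to a statement on the surface $X$; equivalently, one restricts once more to the surface $\widetilde{E_{j}\times E_{j}}$ swept out by the negative curves and checks the vanishing there. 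Granting these cokernel vanishings, the two restriction sequences close the induction and yield $H^{1}(Y,\mathcal{O}_{Y}(\mathcal{L}))=0$.
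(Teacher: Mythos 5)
Your inductive skeleton is sound and is essentially a reorganization of the paper's argument: the paper peels off $F_{j}=(E_{j})_{1}+(E_{j})_{2}=Y_{j}+Z_{j}$ in a single step and then splits the reducible divisor $F_{j}$ by a Mayer--Vietoris type sequence, whereas you peel $(E_{j})_{1}$ and $(E_{j})_{2}$ in two separate steps. The ``clean half'' of your step (adjunction on $Y_{j}$, Corollary \ref{corolarioban}, Kawamata--Viehweg) is exactly the paper's treatment of the twisted-down piece $\mathcal{O}_{Y_{i}}(\mathcal{L}-W_{i}-\sum_{j=0}^{i-1}F_{j})$, and you have correctly located the obstruction: the vanishing $H^{1}(Y_{j}',\mathcal{L}_{j-1}\mid_{Y_{j}'})=0$ on the strict transform of $X\times E_{j}$ (the paper's $Z_{j}$), which both arguments need and which Kawamata--Viehweg cannot give because of the curves on which the twist has degree $\widetilde{C}\cdot E_{j}-3=-1$.

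However, this is exactly where your proposal stops being a proof: you write ``granting these cokernel vanishings,'' and the granted statement is the single hardest step of the theorem. The paper proves the vanishing on $Z_{i}$ by one more restriction sequence whose cokernel lives on the reducible surface $W_{i}=Y_{i}\cap Z_{i}=U_{ii}\cup\widetilde{\Lambda}_{i}$, where $U_{ii}\cong\mathbb{P}^{1}\times\mathbb{P}^{1}$ is the strict transform of $E_{i}\times E_{i}$ and $\widetilde{\Lambda}_{i}\cong\mathbb{F}_{3}$ is the relevant piece of the exceptional divisor --- not on $U_{ii}=\widetilde{E_{i}\times E_{i}}$ alone, as your last sentence suggests: as a divisor one has $W_{i}=U_{ii}+\Lambda\mid_{Y_{i}}$, and dropping the $\Lambda$-component leaves a kernel to which Kawamata--Viehweg no longer applies. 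Moreover the computation on $W_{i}$ is genuinely delicate: the restriction of $\mathcal{L}$ to the component $\widetilde{\Lambda}_{i}\cong\mathbb{F}_{3}$ has \emph{nonvanishing} $H^{1}$, so $H^{1}(\mathcal{O}_{W_{i}}(\mathcal{L}))=0$ is not a componentwise statement but comes from a cancellation against $H^{2}(\mathcal{O}_{U_{ii}}(\mathcal{L}-Q_{i}))$ in the gluing sequence for $W_{i}$; no ``degree $\geq -1$ on the fibres'' heuristic will see this. Your alternative route via $R^{1}p_{*}$ along $Y_{j}'\to X$ is likewise only a gesture: even granting $R^{1}p_{*}(\mathcal{L}_{j-1}\mid_{Y_{j}'})=0$ (which must be checked on the reducible fibres over points of $E_{j}$), you still owe the vanishing of $H^{1}(X,p_{*}(\mathcal{L}_{j-1}\mid_{Y_{j}'}))$, and nothing in the proposal addresses it. In short: the architecture is right and slightly cleaner than the paper's, but the essential content of the proof is asserted rather than supplied.
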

\begin{proof}
    Consider the divisors 
    \begin{itemize}
        \item $Z_{j}\subset Y$ defined by the strict transforms of $X\times E_{j}$
        \item $F_{j}:=(E_{j})_{1}+(E_{j})_{2}=Z_{j}+Y_{j}$, with $Y_{j}$ as before.
        \item $F:=\sum_{j=1}^{\delta}F_{j}$
    \end{itemize}
    We recall that $M= \widetilde{C}-E$ then $(M)_{i}= ( \widetilde{C}-\sum_{j=1}^{\delta}E_{j})_{i}$ for $i=1,2$. Since $\mathcal{L}=K_{Y}+(\widetilde{C})_{1}+(\widetilde{C})_{2} -3\Lambda$, then 
    \begin{equation}
        \begin{split}
            \mathcal{L}-F&= K_{Y}+(\widetilde{C})_{1}+(\widetilde{C})_{2} -3\Lambda-F\\
            &=K_{Y}+(\widetilde{C})_{1}+(\widetilde{C})_{2} -3\Lambda- \left(\sum_{j=1}^{\delta} (E_{j})_{1}+(E_{j})_{2}\right)\\
            &=K_{Y}+(M)_{1}+(M)_{2} -3\Lambda.
        \end{split}
    \end{equation}
    Therefore, by the Corollary \ref{CoroVanishing}, we have that $H^{1}(Y,\mathcal{L}-F)=0$.\\

By convention, we take $F_{0}=0$. Note that, we have a exact sequence for each $i=1,\dots,\delta$
\begin{equation}
    \begin{split}
        0\rightarrow\mathcal{O}_{Y}(-F_{i})\rightarrow\mathcal{O}_{Y}\rightarrow \mathcal{O}_{F_{i}}\rightarrow 0.
    \end{split}
\end{equation}
Tenzoring with $\mathcal{O}_{Y}(\mathcal{L}-\sum_{j=0}^{i-1}F_{j})$ we obtain:
\begin{equation} \label{exactsequenceY}
    \begin{split}
        0\rightarrow\mathcal{O}_{Y}(\mathcal{L}-\sum_{j=0}^{i}F_{j})\rightarrow\mathcal{O}_{Y}(\mathcal{L}-\sum_{j=0}^{i-1}F_{j})\rightarrow \mathcal{O}_{F_{i}}(\mathcal{L}-\sum_{j=0}^{i-1}F_{j})\rightarrow 0
    \end{split}
\end{equation}
For $i=1$, we have the sequence
\begin{equation*}
    \begin{split}
        0\rightarrow\mathcal{O}_{Y}(\mathcal{L}-F_{1})\rightarrow\mathcal{O}_{Y}(\mathcal{L})\rightarrow \mathcal{O}_{F_{1}}(\mathcal{L})\rightarrow 0,
    \end{split}
\end{equation*}
thus, if $H^{1}(Y,\mathcal{O}_{Y}(\mathcal{L}-F_{1}))=0$ and $H^{1}(F_{1},\mathcal{O}_{F_{1}}(\mathcal{L}))=0$,
then we are done. From the exact sequence in (\ref{exactsequenceY}) for $i=2$, if $H^{1}(Y,\mathcal{O}_{Y}(\mathcal{L}-F_{1}-F_{2}))=0$ and $H^{1}(F_{2},\mathcal{O}_{F_{2}}(\mathcal{L}-F_{1}))=0$ then we have the vanishing of $H^{1}(Y,\mathcal{O}_{Y}(\mathcal{L}-F_{1}))$. Recursively until $i=\delta$,  if $H^{1}(Y, \mathcal{O}_{Y}(\mathcal{L}-F))=0$ and $H^{1}(F_{\delta},\mathcal{O}_{F_{\delta}}(\mathcal{L}-\sum_{j=0}^{\delta-1}F_{j}))=0$ we have the vanishing of $H^{1}(Y,\mathcal{O}_{Y}(\mathcal{L}-\sum_{i=0}^{\delta-1}F_{i}))$.\\ Therefore, we only need to show that 
\begin{equation}\label{vanishingH1F_{i}}
    H^{1}(F_{i},\mathcal{O}_{F_{i}}(\mathcal{L}-\sum_{j=0}^{i-1}F_{j}))=0, \text{  for each  } i=1,\dots,\delta.
\end{equation}
Let us denote by $U_{ij}$ the strict transform on $Y$ of $E_{i} \times E_{j}$. Note that if $i \neq j$, then $E_i \cap E_j = \emptyset$, and so
$U_{ij} \cong E_i \times E_j.$\\
By definition, the strict transforms of the divisors $E_i \times X$ and $X \times E_i$ under the blow-up $\pi: Y \to X \times X$ are $Y_i = \pi^*(E_i \times X) - \Lambda|_{Y_i}$ and $Z_i = \pi^*(X \times E_i) - \Lambda|_{Z_i}$ where $\Lambda$ is the exceptional divisor.\\
The subscheme $E_i \times E_i = (E_i \times X) \cap (X \times E_i) \subset X \times X$ intersects the diagonal $\Delta$ along
$\Delta_{E_i} \cong E_i.$\\
The strict transform $U_{ii}$ of $E_i \times E_i$ is, by definition, $U_{ii} = \text{strict transform of } E_i \times E_i
       = \pi^*(E_i \times E_i) - \Lambda|_{E_i}.$\\
On the other hand, the intersection of the strict transforms
\[
W_i := Y_i \cap Z_i
\]
contains both the strict transform $U_{ii}$ and the contribution from the exceptional divisor restricted to $Y_i$. Therefore, as divisors on $Y_i$, one has
\[
W_i = U_{ii} + \Lambda|_{Y_i}.
\]

By the notation introduced above, we have $F_i = Y_i \cup Z_i$ and $W_i = Y_i \cap Z_i$. Then, as divisors on $Y_i$, $W_i$ and $U_{ii} + \Lambda|_{Y_i}$ are linearly equivalent. Consequently, there is a short exact sequence of sheaves
\begin{equation}
0 \longrightarrow \mathcal{O}_{Y_i}(-W_i) \longrightarrow \mathcal{O}_Y \longrightarrow \mathcal{O}_{Z_i} \longrightarrow 0.
\end{equation}
Tenzoring with $\mathcal{O}_{F_{i}}(\mathcal{L}-\sum_{j=0}^{i-1}F_{j})$ we obtain
\begin{equation}\label{exactsequenceY_{i}}
    0\rightarrow \mathcal{O}_{Y_{i}}(\mathcal{L}-W_{i}-\sum_{j=0}^{i-1}F_{j})\rightarrow\mathcal{O}_{F_{i}}(\mathcal{L}-\sum_{j=0}^{i-1}F_{j})\rightarrow\mathcal{O}_{Z_{i}}(\mathcal{L}-\sum_{j=0}^{i-1}F_{j})\rightarrow 0.
\end{equation}
On $Y_{i}$ also we have a exact sequence
\begin{equation}
    0\rightarrow \mathcal{O}_{Y_{i}}(\mathcal{L}-W_{i}-\sum_{j=0}^{i-1}F_{j})\rightarrow\mathcal{O}_{Y_{i}}(\mathcal{L}-\sum_{j=0}^{i-1}F_{j})\rightarrow\mathcal{O}_{W_{i}}(\mathcal{L}-\sum_{j=0}^{i-1}F_{j})\rightarrow 0.
\end{equation}
since $W_{i}\cap F_{j}=\emptyset$ for $j\leq i-1$, this sequence is reduced to
\begin{equation}\label{exactsequenceY_{i}2}
    0\rightarrow \mathcal{O}_{Y_{i}}(\mathcal{L}-W_{i}-\sum_{j=0}^{i-1}F_{j})\rightarrow\mathcal{O}_{Y_{i}}(\mathcal{L}-\sum_{j=0}^{i-1}F_{j})\rightarrow\mathcal{O}_{W_{i}}(\mathcal{L})\rightarrow 0.
\end{equation}
From the sequence (\ref{exactsequenceY_{i}}), if for each $i=1\dots,\delta$
\begin{equation}
    \begin{split}
        H^{1}(\mathcal{O}_{Y_{i}}(\mathcal{L}-W_{i}-\sum_{j=0}^{i-1}F_{j}))&=0\\
        H^{1}(\mathcal{O}_{Z_{i}}(\mathcal{L}-\sum_{j=0}^{i-1}F_{j}))&=0
    \end{split}
\end{equation}
then (\ref{vanishingH1F_{i}}) is satisfied. However, due to the definition of $Z_{i}$ and the symmetry with $Y_{i}$, by the exact sequence in (\ref{exactsequenceY_{i}2}), it reduces to proving that, for each $i=1,\dots,\delta$
\begin{equation}\label{parte1}
        H^{1}(\mathcal{O}_{Y_{i}}(\mathcal{L}-W_{i}-\sum_{j=0}^{i-1}F_{j}))=0
\end{equation}
\begin{equation}\label{parte2}
        H^{1}(\mathcal{O}_{W_{i}}(\mathcal{L})=0
\end{equation}
To prove (\ref{parte1}), let us observe that, $Y_{i}=(E_{i})_{1}$ and $Z_{i}=(E_{i})_{2}$. Therefore, as divisors on $Y_{i}$, $W_{i}=Z_{i}|_{Y_{i}}=[(E_{i})_{2}]|_{Y_{i}}$ then
\begin{equation*}
    \begin{split}
        \mathcal{L}|_{Y_{i}}=[K_{Y}+(\widetilde{C})_{1}+(\widetilde{C})_{2}-3\Lambda]|_{Y_{i}}&=[K_{Y}+(E_{i})_{1}+(E_{i})_{2}+(\widetilde{C}-E_{i})_{1}+(\widetilde{C}-E_{i})_{2}-3\Lambda]|_{Y_{i}}\\
        &\cong [K_{Y}+(E_{i})_{1}]|_{(E_{i})_{1}}+[(E_{i})_{2}]|_{Y_{i}}+[(\widetilde{C}-E_{i})_{1}+(\widetilde{C}-E_{i})_{2}-3\Lambda]|_{Y_{i}}\\
        & \cong K_{Y_{i}}+ W_{i}+ [(\widetilde{C}-E_{i})_{1}+(\widetilde{C}-E_{i})_{2}-3\Lambda]|_{Y_{i}}
    \end{split}
\end{equation*}
then
\begin{equation}
    \mathcal{O}_{Y_{i}}(\mathcal{L}-W_{i}-\sum_{j=0}^{i-1}F_{j})\cong \mathcal{O}_{Y_{i}}(K_{Y_{i}} +  (\widetilde{C}-E_{i})_{1}+(\widetilde{C}-E_{i})_{2}-3\Lambda -\sum_{j=0}^{i-1}F_{j})
\end{equation}
note that
\begin{equation*}
    \begin{split}
        (\widetilde{C}-E_{i})_{1}+(\widetilde{C}-E_{i})_{2} -\sum_{j=0}^{i-1}F_{j}&=(\widetilde{C})_{1}+(\widetilde{C})_{2}-F_{i}-\sum_{j=0}^{i-1}F_{j}\\
        &= (\widetilde{C})_{1}+(\widetilde{C})_{2}-\sum_{j=0}^{i}F_{j}=(\widetilde{C})_{1}+(\widetilde{C})_{2}-\sum_{j=1}^{i}(E_{j})_{1}+(E_{j})_{2}\\
        &=(\widetilde{C}-\sum_{j=1}^{i}E_{j})_{1}+(\widetilde{C}-\sum_{j=1}^{i}E_{i})_{2}
    \end{split}
\end{equation*}
Therefore, $\mathcal{O}_{Y_{i}}(\mathcal{L}-W_{i}-\sum_{j=0}^{i-1}F_{j})\cong \mathcal{O}_{Y_{i}}(K_{Y_{i}} +  (\widetilde{C}-\sum_{j=1}^{i}E_{j})_{1}+(\widetilde{C}-\sum_{j=1}^{i}E_{j})_{2}-3\Lambda )$ and by Corollary \ref{corolarioban} we have that $[(\widetilde{C}-\sum_{i=1}^{j}E_{i})_{1}+(\widetilde{C}-\sum_{i=1}^{j}E_{i})_{2}-3\Lambda]|_{Y_{j}}$ is a big and nef divisor on $Y_{j}$ and by Kawamata Viehweg vanishing theorem we obtain (\ref{parte1}).\\

To prove (\ref{parte2}), we will proceed as in (\cite{MR1760876}, Lemma 3.1), by selfcontained we give the proof of such fact.\\ Let $\widetilde{\Lambda}_{i}=\Lambda|_{Y_{i}}$ so $\widetilde{\Lambda}_{i}\cong \mathbb{P}\mathcal{E}$ with $\mathcal{E}\cong N^{*}_{\Delta_{E_{i}}/E_{i}\times X}$. From the exact sequence of normal bundle
\begin{equation}
    0\rightarrow N_{\Delta_{E_{i}}/E_{i}\times E_{i}}\rightarrow N_{\Delta_{E_{i}}/E_{i}\times X} \rightarrow N_{E_{i}\times E_{i}/E_{i}\times X}|_{\Delta_{E_{i}}}\rightarrow 0
\end{equation}
and the isomorphism $N_{\Delta_{E_{i}}/E_{i}\times E_{i}}\cong T_{E_{i}}\cong\mathcal{O}_{\mathbb{P}^{1}}(2)$ and $N_{E_{i}\times E_{i}/E_{i}\times X}|_{\Delta_{E_{i}}}\cong N_{E_{i}/X}\cong \mathcal{O}_{\mathbb{P}^{1}}(-1)$, so $N_{\Delta_{E_{i}}/E_{i}\times X} \cong \mathcal{O}_{\mathbb{P}^{1}}(2)\oplus\mathcal{O}_{\mathbb{P}^{1}}(-1)$ and $\mathcal{E}\cong\mathcal{O}_{\mathbb{P}^{1}}(-2)\oplus\mathcal{O}_{\mathbb{P}^{1}}(1)$, thus $\mathbb{P}\mathcal{E}\cong\mathbb{P}(\mathcal{O}_{\mathbb{P}^{1}}(-3)\oplus\mathcal{O}_{\mathbb{P}^{1}})\cong \mathbb{F}_{3}$.\\ 
As
\[
\mathcal{L} = \mathcal{O}_{Y}\big((K_X + \widetilde{C})_1 + (K_X + \widetilde{C})_2 - 2\Lambda\big)
\]
then restricting \(\mathcal{L}\) to \(\widetilde{\Lambda}_i\) gives
\[
\mathcal{L}\big|_{\widetilde{\Lambda}_i} = -2 \Lambda\big|_{\widetilde{\Lambda}_i}.
\]
Under the identification \(\widetilde{\Lambda}_i \cong \mathbb{P}\mathcal{E}\), the tautological class \(\mathcal{O}_{\widetilde{\Lambda}_i}(1)\) corresponds to the divisor \(C_0 + F\), where \(C_0 \in |\mathcal{O}_{\mathbb{P}\mathcal{E}}(1)|\) and \(F\) is a fiber of the projection \(\mathbb{P}\mathcal{E} \to E_i \cong \mathbb{P}^1\). Hence, we have
$
\Lambda\big|_{\widetilde{\Lambda}_i} = \mathcal{O}_{\mathbb{P}\mathcal{E}}(-C_0 - F),
$
and therefore
\[
\mathcal{L}\big|_{\widetilde{\Lambda}_i} \cong \mathcal{O}_{\mathbb{P}\mathcal{E}}(2C_0 + 2F).
\]
Furthermore, the intersection $Q_{i}=U_{ii}\cap \widetilde{\Lambda}$ is isomorphic to $\Delta_{E_{i}}$ and is a divisor of the type $C_{0}+\alpha F$ in $\mathbb{P}\mathcal{E}$ for some $\alpha$. Therefore
\begin{equation*}
    h^{1}(\mathcal{O}_{\widetilde{\Lambda}_{i}}(\mathcal{L}))=h^{1}(\mathcal{O}_{\mathbb{F}_{3}}(2C_{0}+2F))=1.
\end{equation*}
\begin{equation*}
    h^{2}(\mathcal{O}_{\widetilde{\Lambda}_{i}}(\mathcal{L}-Q_{i}))=h^{2}(\mathcal{O}_{\mathbb{F}_{3}}(C_{0}+(2-\alpha)F))=0.
\end{equation*}
From the exact sequence
\begin{equation*}
    0\rightarrow \mathcal{O}_{\widetilde{\Lambda}_{i}}(-\widetilde{\Lambda}_{i}\cap U_{ii})\rightarrow \mathcal{O}_{W_{i}}\rightarrow\mathcal{O}_{U_{ii}}\rightarrow0
\end{equation*}
tensoring with $\mathcal{O}_{Y}(\mathcal{L})$ we obtain
\begin{equation}
    0\rightarrow \mathcal{O}_{\widetilde{\Lambda}_{i}}(\mathcal{L}-Q_{i})\rightarrow \mathcal{O}_{W_{i}}(\mathcal{L})\rightarrow\mathcal{O}_{U_{ii}}(\mathcal{L})\rightarrow0
\end{equation}
since $\mathcal{O}_{U_{ii}}(\mathcal{L})\cong\mathcal{O}_{\mathbb{P}^{1}\times\mathbb{P}^{1}}(-1,-1)$ then $H^{2}(\mathcal{O}_{U_{ii}}(\mathcal{L}))=0$ and therefore $H^{2}(\mathcal{O}_{W_{i}}(\mathcal{L}))=0$.\\
Now, from the exact sequence 
\begin{equation}
    0\rightarrow \mathcal{O}_{U_{ii}} \rightarrow \mathcal{O}_{W_{i}}\rightarrow \mathcal{O}_{\widetilde{\Lambda}_{i}}\rightarrow 0
\end{equation}
tensoring with $\mathcal{O}_{Y}(\mathcal{L})$ we obtain
\begin{equation}
    0\rightarrow \mathcal{O}_{U_{ii}}(\mathcal{L}-Q_{i})\rightarrow \mathcal{O}_{W_{i}}(\mathcal{L})\rightarrow\mathcal{O}_{\widetilde{\Lambda}_{i}}(\mathcal{L})\rightarrow0
\end{equation}
since $\mathcal{O}_{U_{ii}}(\mathcal{L}-Q_{i})\cong \mathcal{O}_{\mathbb{P}^{1}\times\mathbb{P}^{1}}(-2,-2)$ then $h^{1}(\mathcal{O}_{U_{ii}}(\mathcal{L}-Q_{i}))=0$ and $h^{2}(\mathcal{O}_{U_{ii}}(\mathcal{L}-Q_{i}))=1$. And finally we get that $H^{1}(\mathcal{O}_{W_{i}}(\mathcal{L}))=0$.
\end{proof}
Hence, the morphism $\Phi_{K_{X}+\widetilde{C}}$ is surjective. The summary is as follows.
\begin{theorem}\label{teoremafinaldeltanodos}
    Let $p_{1},\dots,p_{\delta}$ be $\delta$ distinct points on $\mathbb{F}_{n}$ such that for each $j$, $p_{j}\notin C_{0}$, and for $i,j\in\{1,\dots,\delta\}$, $p_{i}$ and $p_{j}$ are not on the same fiber of the fibration that admits $\mathbb{F}_{n}$, $\phi:\mathbb{F}_{n}\rightarrow\mathbb{P}^{1}$.\\
Suppose that $C$ is a nodal curve on $\mathbb{F}_{n}$ in the linear system $|D|$, with $D$ linearly equivalent to $aC_{0}+bF$.
Let $\sigma:X=Bl_{Z}\mathbb{F}_{n}\rightarrow\mathbb{F}_{n}$ be the blow-up of $\mathbb{F}_{n}$ at $Z$, where $Z=\{p_{1},\dots,p_{\delta}\}$ are the nodes of $C$. Let $\widetilde{C}$ be the normalization of $C$ under $\sigma$.\\ Assume that $a\geq 6$ and $b\geq \max\{(a+7)n,(a-1)n+\delta+2,6\delta-3n+3\}$,
then:
\begin{enumerate}
    \item  the Gaussian map $\Phi_{X,\mathcal{O}_{X}(K_{X}+\widetilde{C})}$ 
    is surjective.
    \item $\textnormal{corank}(\Phi_{\widetilde{C}})=h^{0}(\mathbb{F}_{n},\mathcal{O}_{\mathbb{F}_{n}}(-K_{\mathbb{F}_{n}}))$.
\end{enumerate}
\end{theorem}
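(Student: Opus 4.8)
The plan is to assemble the two conclusions from the machinery already developed, since this statement is essentially the synthesis of Theorem~\ref{corangoh0rhofinal} (the corank of $H^{0}(\rho)$) and Theorem~\ref{h1L=0} (the surjectivity on the blow-up), glued together through Remark~\ref{remarkcorank2}. The genuinely new task is purely bookkeeping: to check that the single hypothesis $a\geq 6$, $b\geq\max\{(a+7)n,(a-1)n+\delta+2,6\delta-3n+3\}$ simultaneously implies the numerical conditions required by each of those earlier results.

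For part (1), I would first observe that the stated bounds contain the hypotheses of Corollary~\ref{corolarioban}, namely $a\geq 6$ and $b\geq\max\{(a+7)n,6\delta-3n+3\}$, since both of those terms appear inside our maximum. Hence Theorem~\ref{theoremBigandNef} applies and $(M)_{1}+(M)_{2}-3\Lambda$ is big and nef, so Corollary~\ref{CoroVanishing} gives $H^{1}(Y,K_{Y}+(M)_{1}+(M)_{2}-3\Lambda)=0$; feeding this into Theorem~\ref{h1L=0} yields $H^{1}(Y,\mathcal{O}_{Y}(\mathcal{L}))=0$ with $\mathcal{L}=K_{Y}+(\widetilde{C})_{1}+(\widetilde{C})_{2}-3\Lambda$. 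By the reformulation at the start of Section~\ref{sec:surjectivity}, this group is isomorphic to $H^{1}(X\times X,\,pr_{1}^{*}(K_{X}+\widetilde{C})\otimes pr_{2}^{*}(K_{X}+\widetilde{C})\otimes\mathcal{I}_{\Delta}^{2})$, whose vanishing is, by \cite{MR1201392}, a sufficient condition for the surjectivity of $\Phi_{X,\mathcal{O}_{X}(K_{X}+\widetilde{C})}$. This establishes (1).

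For part (2), I would invoke Proposition~\ref{propositionres} (so that $\mathrm{Res}$ is surjective) together with part (1), and apply Remark~\ref{remarkcorank2} to conclude $\operatorname{corank}(\Phi_{\widetilde{C}})=\operatorname{corank}(H^{0}(\rho))$. It then remains to identify this last corank. Here I would verify that our bounds subsume the hypotheses of Theorem~\ref{corangoh0rhofinal}: we have $a\geq 6\geq 5$, the term $(a-1)n+\delta+2$ is explicitly present in our maximum, and $(a+7)n\geq (a-2)n+6$ whenever $n\geq 1$ (since their difference is $9n-6\geq 0$), while for $n=0$ the bound $b\geq 6\delta-3n+3$ already forces $b\geq 5$, placing us in the $n=0$ branch of that theorem. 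Granting this, Theorem~\ref{corangoh0rhofinal} gives $\operatorname{corank}(H^{0}(\rho))=h^{0}(\mathbb{F}_{n},\mathcal{O}_{\mathbb{F}_{n}}(-K_{\mathbb{F}_{n}}))$, which is the asserted value.

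The main obstacle is not conceptual but lies entirely in this last verification: one must confirm that the inequality $a\geq\delta+2$ demanded by Theorem~\ref{corangoh0rhofinal} is genuinely available, since $a\geq 6$ alone secures it only for $\delta\leq 4$. I would either record $a\geq\delta+2$ as a standing hypothesis (consistent with the summary Proposition preceding diagram~\eqref{diagramaconmuativo1}), or derive it from the existence of a $\delta$-nodal curve in $|aC_{0}+bF|$ with nodes in the prescribed general position; once that inequality is in hand, the whole argument is a direct concatenation of the cited statements with no further computation required.
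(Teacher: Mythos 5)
Your proposal follows exactly the route of the paper's own (very terse) proof: part (1) is the concatenation Corollary~\ref{corolarioban} $\Rightarrow$ Theorem~\ref{h1L=0} $\Rightarrow$ surjectivity via the criterion of \cite{MR1201392}, and part (2) is Remark~\ref{remarkcorank2} combined with Theorem~\ref{corangoh0rhofinal}. Your numerical verifications are correct: $(a+7)n\geq(a-2)n+6$ for $n\geq 1$, the term $(a-1)n+\delta+2$ sits inside the maximum, and for $n=0$ the bound $b\geq 6\delta+3$ covers the remaining branch.

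The issue you flag at the end is real and is a gap in the paper's statement rather than in your argument. Theorem~\ref{corangoh0rhofinal} requires $a\geq\max\{5,\delta+2\}$, and this is traced back through Corollary~\ref{corolariovanishing2k+2c} and Lemma~\ref{lemmadeltanodos} to the $(2\delta-1)$-jet ampleness of $\mathcal{F}$, which is what forces the vanishing $H^{1}(X,\Omega^{1}_{X}(2K_{X}+2\widetilde{C}))=0$ used in Remark~\ref{remarkh0rho1} to identify $\operatorname{corank}(H^{0}(\rho))$ with $h^{1}(X,\Omega^{1}_{X}(\log\widetilde{C})(2K_{X}+\widetilde{C}))$. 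The hypothesis $a\geq 6$ yields $a\geq\delta+2$ only for $\delta\leq 4$, and it cannot be recovered from the geometric hypotheses: for fixed $a=6$ and $b$ large, the arithmetic genus grows without bound and a curve can acquire many nodes on pairwise distinct fibers, so $\delta$ is not controlled by $a$. Your first proposed remedy --- adding $a\geq\delta+2$ (equivalently $\delta\leq a-2$) as an explicit hypothesis --- is the correct one; the second (deriving it from the existence of the nodal curve) does not go through. With that hypothesis recorded, your proof is complete and matches the paper's.
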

\begin{proof}
    For (1), the conditions imply those of Theorem \ref{h1L=0}. For (2), combine Remark \ref{remarkcorank2} and Theorem \ref{corangoh0rhofinal}.\\
\end{proof}
As mentioned above, for the case $\delta=1$ we can weaken numerical conditions for the surjectivity of $\Phi_{X,\mathcal{O}_{X}(K_{X}+\widetilde{C})}$. Instead of using Lemma \ref{lemmaA} and Lemma \ref{lemaB}, we will use the next lemma.
\begin{lemma}\label{lema1nodoveryample}
$\sigma^{*}A-E$ and $\sigma^{*}B-E$ are very ample divisors on $X$ if
    \begin{align}
        \left[\frac{b}{3}\right] & \geq \left[\frac{a}{3}\right]n+1,\label{condicionb2}\\
        \left[\frac{a}{3}\right] & \geq 2. \label{condiciona2}
    \end{align}
\end{lemma}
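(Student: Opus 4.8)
The plan is to exploit the extra geometry available when $\delta=1$: as remarked before Lemma~\ref{lemmaA}, the surface $X=\mathrm{Bl}_{p}\mathbb{F}_{n}$ is then a smooth projective toric surface, and on such a surface every ample divisor is already very ample. Consequently, instead of appealing to Reider's theorem as in Lemma~\ref{lemmaA}, it suffices to prove that $\sigma^{*}A-E$ and $\sigma^{*}B-E$ are \emph{ample}, and for this I would invoke the Nakai--Moishezon criterion: a divisor $D$ on the surface $X$ is ample precisely when $D^{2}>0$ and $D\cdot\Gamma>0$ for every irreducible curve $\Gamma\subset X$.

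First I would treat $D:=\sigma^{*}A-E$. From $(\sigma^{*}A)^{2}=A^{2}$, $\sigma^{*}A\cdot E=0$ and $E^{2}=-1$ one computes
\[
D^{2}=A^{2}-1=\left[\tfrac{a}{3}\right]\left(2\left[\tfrac{b}{3}\right]-n\left[\tfrac{a}{3}\right]\right)-1 .
\]
Condition~\eqref{condicionb2} gives $2\left[\tfrac{b}{3}\right]-n\left[\tfrac{a}{3}\right]\geq n\left[\tfrac{a}{3}\right]+2\geq 2$, and together with \eqref{condiciona2} this yields $D^{2}\geq 3>0$. For the curve conditions I would organise the irreducible curves $\Gamma$ by their image in $\mathbb{F}_{n}$; the only ones that could cause trouble are those of non-positive self-intersection, namely the exceptional curve $E$ (with $D\cdot E=1$), the strict transform $\sigma^{*}C_{0}$ of the negative section, which is disjoint from $E$ since $p\notin C_{0}$ (so $D\cdot\sigma^{*}C_{0}=A\cdot C_{0}=\left[\tfrac{b}{3}\right]-n\left[\tfrac{a}{3}\right]\geq 1$ by \eqref{condicionb2}), and the strict transform $\sigma^{*}F-E$ of the fibre through $p$ (with $D\cdot(\sigma^{*}F-E)=\left[\tfrac{a}{3}\right]-1\geq 1$ by \eqref{condiciona2}).

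For a general irreducible curve I would write $\Gamma=\alpha\sigma^{*}C_{0}+\beta\sigma^{*}F-mE$ with $\alpha\geq 1$; its image on $\mathbb{F}_{n}$ forces $\beta\geq\alpha n$, and since $\Gamma\neq\sigma^{*}F-E$ is irreducible, $\Gamma\cdot(\sigma^{*}F-E)=\alpha-m\geq 0$, i.e.\ $m\leq\alpha$. Then
\[
D\cdot\Gamma=\alpha\left(\left[\tfrac{b}{3}\right]-n\left[\tfrac{a}{3}\right]\right)+\beta\left[\tfrac{a}{3}\right]-m\geq(\alpha-m)+\beta\left[\tfrac{a}{3}\right]\geq 0,
\]
and a short split into the cases $\beta\geq 1$ (where $\beta\left[\tfrac{a}{3}\right]\geq 2$) and $\beta=0$ (which forces $n=0$) promotes this to $D\cdot\Gamma>0$; this finishes the ampleness, hence very ampleness, of $\sigma^{*}A-E$.

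The divisor $\sigma^{*}B-E$ is handled by the identical Nakai--Moishezon bookkeeping. Writing $a=3\left[\tfrac{a}{3}\right]+r_{a}$ and $b=3\left[\tfrac{b}{3}\right]+r_{b}$ with $r_{a},r_{b}\in\{0,1,2\}$, the coefficients $\left[\tfrac{a}{3}\right]+r_{a}$ and $\left[\tfrac{b}{3}\right]+r_{b}$ of $B$ dominate those of $A$, so the fibre-type and exceptional estimates transfer immediately, and one re-runs the section and general-curve computations with these coefficients. I expect the genuine obstacle to lie exactly here: because $C_{0}^{2}=-n<0$, a larger $C_{0}$-coefficient does not automatically improve the intersection with the negative section, so the bound $D\cdot\sigma^{*}C_{0}>0$ must be checked by hand against the remainders $r_{a},r_{b}$, and the interaction of this section class with curves of high multiplicity $m$ at $p$ (together with the boundary case $n=0$, $\left[\tfrac{b}{3}\right]=1$) is where the full strength of \eqref{condicionb2}--\eqref{condiciona2} is consumed.
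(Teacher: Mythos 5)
Your overall strategy is exactly the paper's: for $\delta=1$ the blow-up $X$ is a smooth toric surface, so very ampleness reduces to ampleness, which is then verified with the Nakai--Moishezon criterion. Your treatment of $\sigma^{*}A-E$ reproduces the paper's case analysis (self-intersection; the curves $E$, $\sigma^{*}C_{0}$, $\sigma^{*}F-E$; general $\Gamma$ with $\beta\geq\alpha n$ and $m\leq\alpha$), and your expansion $D^{2}=\left[\frac{a}{3}\right]\left(2\left[\frac{b}{3}\right]-n\left[\frac{a}{3}\right]\right)-1$ is the correct one (the paper's factored form has a harmless slip). Up to the boundary issue you yourself flag, this half is sound.

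The gap is in the $B$-half, and it is not a routine omission. Writing $a=3\left[\frac{a}{3}\right]+r_{a}$ and $b=3\left[\frac{b}{3}\right]+r_{b}$ with $r_{a},r_{b}\in\{0,1,2\}$, the $C_{0}$-coefficient of $B$ is $\left[\frac{a}{3}\right]+r_{a}$, and the intersection you defer to a ``check by hand'' is
\[
(\sigma^{*}B-E)\cdot\sigma^{*}C_{0}
=\left(\left[\tfrac{b}{3}\right]+r_{b}\right)-\left(\left[\tfrac{a}{3}\right]+r_{a}\right)n
\;\geq\;1+r_{b}-r_{a}n
\]
by \eqref{condicionb2}, and this lower bound is nonpositive as soon as $n\geq 1$, $r_{a}\geq 1$, $r_{b}=0$. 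Concretely, $n=1$, $a=8$, $b=12$ satisfies \eqref{condicionb2}--\eqref{condiciona2} (here $\left[\frac{a}{3}\right]=2$, $\left[\frac{b}{3}\right]=4$), yet $B=4C_{0}+4F$ gives $B\cdot C_{0}=0$, so $\sigma^{*}B-E$ is not ample. Thus the step you identified as the ``genuine obstacle'' is precisely where the argument breaks, and no amount of bookkeeping repairs it under the stated hypotheses; one needs an additional assumption such as $b-2\left[\frac{b}{3}\right]>\left(a-2\left[\frac{a}{3}\right]\right)n$, which is automatic only for $n=0$. (For what it is worth, the paper's own proof asserts this positivity without justification, so you have put your finger on a real weak point of the lemma --- but a proof proposal cannot stop at the step that fails. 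The boundary case $n=0$, $\left[\frac{b}{3}\right]=1$ you mention is a second, milder defect: the ruling of class $C_{0}$ through $p$ gives $(\sigma^{*}A-E)\cdot(\sigma^{*}C_{0}-E)=\left[\frac{b}{3}\right]-1=0$.)
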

\begin{proof}
We note that $\mathbb{F}_{n}$ a toric surface, thus every ample divisor is a very ample divisor. In particular, $X$ is a toric surface since is a monoidal transformation of $\mathbb{F}_{n}$ \cite{MR2810322}. Therefore, it is enough to prove that $\sigma^{*}A-E$ and $\sigma^{*}B-E$ are ample divisor and we will use the Nakai-Moishezon criterion.\\
    First, we can see 
    \begin{equation*}
        (\sigma^{*}A-E)^{2}=A^{2}-1=-\left[\frac{a}{3}\right]^{2}n+2\left[\frac{a}{3}\right]\left[\frac{b}{3}\right]-1 = \left[\frac{a}{3}\right]\left( \left[\frac{b}{3}\right]-\left[\frac{a}{3}\right]n\right)-1 >0
    \end{equation*}
    with the conditions 
    \begin{align}
        \left[\frac{b}{3}\right] & \geq \left[\frac{a}{3}\right]n+1, \label{condicionb}\\
        \left[\frac{a}{3}\right] & \geq 2. \label{condiciona}
    \end{align}
Now, for any irreducible curve $\Gamma\subset X$ we have to show that $(\sigma^{*}A-E).\Gamma>0$.\\
Since $\Gamma$ has the form $\sigma^{*}G-\gamma E$, where $G=\alpha_{1}C_{0}+\alpha_{2}F$ and $\gamma\in \mathbb{Z}$.\\
If $\gamma<0$ then $(\sigma^{*}A-E).\Gamma=A.G-\gamma >0$ because $A$ is very ample divisor with the above conditions. Let us consider the case $\gamma\geq 0$.\\
If $\alpha_{1}=0$ then $G=F$ and
\begin{equation*}
    \Gamma = \left\{ \begin{array}{lcc} \sigma^{*}F-E & & p\in F \\ \\  \sigma^{*}F &  & p\notin F \end{array} \right.
\end{equation*}
so, 
\begin{equation*}
    (\sigma^{*}A-E).\Gamma = \left\{ \begin{array}{lcc} \left[\frac{a}{3}\right] & & p\in F \\ \\\left[\frac{a}{3}\right]-1 &  & p\notin F \end{array} \right. 
\end{equation*}
is positive in any case, if  $(\ref{condiciona})$ is satisfied. \\
If $\alpha_{2}=0$ then $G=C_{0}$ and
\begin{equation*}
    \Gamma = \sigma^{*}C_{0}
\end{equation*}
Since the point $p$ does not belong to $C_{0}$ then the divisor $E$ does not appear in $\Gamma$. It follows that
\begin{equation*}
    (\sigma^{*}A-E).\Gamma =-\left[\frac{a}{3}\right]n+\left[\frac{b}{3}\right] >0.
\end{equation*}
Assume that $\alpha_{1}\neq 0$, $\alpha_{2}\neq 0$. \\
Then $\Gamma$ and $\sigma^{*}F-E$ are distinct curves, $\Gamma.(\sigma^{*}F-E)\geq0$ that is $\alpha_{1}-\gamma\geq 0$.\\
If $n=0$ then $\alpha_{1}>0$ and $\alpha_{2}>0$, thus
\begin{equation*}
    (\sigma^{*}A-E).\Gamma=\left[\frac{a}{3}\right] \alpha_{2}+\left[\frac{b}{3}\right]\alpha_{1}-\gamma > \left[\frac{b}{3}\right]\alpha_{1}-\gamma \geq 0.
\end{equation*}
In the case that $n>0$, then $\alpha_{1}>0$, $\alpha_{2}\geq \alpha_{1} n$. In this case
\begin{equation*}
    (\sigma^{*}A-E).\Gamma=\left[\frac{a}{3}\right] \left(\alpha_{2}-\alpha_{1}n\right)+\left[\frac{b}{3}\right]\alpha_{1}-\gamma \geq \left[\frac{b}{3}\right]\alpha_{1}-\gamma > 0
\end{equation*}
if $\left[\frac{b}{3}\right] \geq 2$ which is satisfied by $(\ref{condicionb})$.\\
With the conditions $(\ref{condicionb})$ and $(\ref{condiciona})$, we have that 
 \begin{equation*}
     \left(\sigma^{*}B-E\right)^{2}=B^{2}-1=\left(a-2\left[\frac{a}{3}\right] \right) \left(2\left(  b-2\left[\frac{b}{3}\right]\right)-\left(a-2\left[\frac{a}{3}\right]\right)n \right)-1 > 0.
 \end{equation*}   
Now, for any irreducible curve $\Gamma\subset X$ we have to show that $(\sigma^{*}B-E).\Gamma>0$.\\
Since $\Gamma$ has the form $\sigma^{*}G-\gamma E$, where $G=\beta_{1}C_{0}+\beta_{2}F$ and $\gamma\in \mathbb{Z}$.\\
If $\gamma<0$ then $(\sigma^{*}B-E).\Gamma=A.G-\gamma >0$ because $B$ is very ample with the above conditions. Let us consider the case $\gamma\geq 0$.\\
If $\beta_{1}=0$ then $G=F$ and
\begin{equation*}
    \Gamma = \left\{ \begin{array}{lcc} \sigma^{*}F-E & & p\in F \\ \\  \sigma^{*}F &  & p\notin F \end{array} \right.
\end{equation*}
so, 
\begin{equation*}
    (\sigma^{*}B-E).\Gamma = \left\{ \begin{array}{lcc} a-2\left[\frac{a}{3}\right] & & p\in F \\ \\ a-2\left[\frac{a}{3}\right]-1 &  & p\notin F \end{array} \right. 
\end{equation*}
is positive in any case, by  $(\ref{condiciona})$. \\
If $\beta_{2}=0$ then $G=C_{0}$ and
\begin{equation*}
    \Gamma = \sigma^{*}C_{0}
\end{equation*}
Since the point $p$ does not belong to $C_{0}$ then the divisor $E$ does not appear in $\Gamma$. It follows that
\begin{equation*}
    (\sigma^{*}B-E).\Gamma =-\left( a-2\left[\frac{a}{3}\right]\right)n+\left( b-2\left[\frac{b}{3}\right]\right) >0.
\end{equation*}
Assume that $\beta_{1}\neq 0$, $\beta_{2}\neq 0$. \\
Then $\Gamma$ and $\sigma^{*}F-E$ are distinct curves, then $\Gamma.(\sigma^{*}F-E)\geq 0$ that is $\beta_{1}-\gamma\geq 0$.\\
If $n=0$ then $\beta_{1}>0$ and $\beta_{2}>0$, thus
\begin{equation*}
    (\sigma^{*}B-E).\Gamma=\left(a-2\left[\frac{a}{3}\right]\right) \beta_{2}+\left( b-2\left[\frac{b}{3}\right]\right)\beta_{1}-\gamma > \left[\frac{b}{3}\right]\alpha_{1}-\gamma \geq 0.
\end{equation*}
In the case that $n>0$, then $\beta_{1}>0$, $\beta_{2}\geq \beta_{1} n$. In this case
\begin{equation*}
    (\sigma^{*}B-E).\Gamma=\left( a-2\left[\frac{a}{3}\right]\right) \left(\beta_{2}-\beta_{1}n\right)+\left(b-2\left[\frac{b}{3}\right]\right)\beta_{1}-\gamma \geq \left(b-2\left[\frac{b}{3}\right]\right) \beta_{1}-\gamma > 0.
\end{equation*}
since $\left[\frac{b}{3}\right] \geq 2$ by $(\ref{condicionb})$.\\
\end{proof}
Therefore, we have the following theorem concerning the case of a single node using the conditions of Lemma \ref{lema1nodoveryample} and Theorem \ref{h1L=0}. 
\begin{theorem}\label{teoremasobregaussian1nodo}
    Suppose that $C$ is a $1-$nodal curve on $\mathbb{F}_{n}$, linearly equivalent to $aC_{0}+bF$. Let $\sigma:X=Bl_{p}(\mathbb{F}_{n})\rightarrow\mathbb{F}_{n}$ be the blow-up of $\mathbb{F}_{n}$ at $p$, where $p\in C$ is the node. Let $\widetilde{C}$ be the normalization of $C$ under $\sigma$. Assume that $b\geq an+3$ and $a\geq 6$,
    then the Gaussian map
    \begin{equation*}
        \Phi_{K_{X}+\widetilde{C}}:\bigwedge^{2}H^{0}(X,\mathcal{O}_{X}(K_{X}+\widetilde{C}))\rightarrow H^{0}(X,\Omega^{1}_{X}(2K_{X}+2\widetilde{C}))
    \end{equation*}
    is surjective.
\end{theorem}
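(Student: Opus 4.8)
The plan is to specialize the cohomological machinery of Theorem~\ref{h1L=0} to the single-node case $\delta=1$, but to feed it the \emph{weaker} very-ampleness input of Lemma~\ref{lema1nodoveryample} in place of Lemmas~\ref{lemmaA} and~\ref{lemaB}. The conclusion of Theorem~\ref{h1L=0} is that $H^1(Y,\mathcal{O}_Y(\mathcal{L}))=0$ for $\mathcal{L}=K_Y+(\widetilde{C})_1+(\widetilde{C})_2-3\Lambda$, and by the criterion of \cite{MR1201392} this is exactly what yields surjectivity of $\Phi_{X,\mathcal{O}_X(K_X+\widetilde{C})}$; so it suffices to re-derive that vanishing under the hypotheses $a\geq 6$, $b\geq an+3$.

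First I would check that these hypotheses imply the numerical conditions $(\ref{condicionb2})$ and $(\ref{condiciona2})$ of Lemma~\ref{lema1nodoveryample}. Since $a\geq 6$ gives $[a/3]\geq 2$, condition $(\ref{condiciona2})$ holds. For $(\ref{condicionb2})$, from $b\geq an+3$ one gets $b/3\geq an/3+1\geq [a/3]n+1$ (using $[a/3]\leq a/3$ and $n\geq 0$); as $[a/3]n+1$ is an integer bounded above by $b/3$, it is bounded above by $[b/3]$. Hence Lemma~\ref{lema1nodoveryample} applies and both $\sigma^{*}A-E$ and $\sigma^{*}B-E$ are very ample on $X$.

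Next, with $\delta=1$ we have $E=E_1$ and $M=\widetilde{C}-E=2(\sigma^{*}A-E)+(\sigma^{*}B-E)$. Applying Lemma~\ref{ClaimBEL} to each of the two very ample divisors shows that $(\sigma^{*}A-E)_1+(\sigma^{*}A-E)_2-\Lambda$ and $(\sigma^{*}B-E)_1+(\sigma^{*}B-E)_2-\Lambda$ are big and nef on $Y$; writing $(M)_1+(M)_2-3\Lambda$ as twice the first plus the second (exactly as in Theorem~\ref{theoremBigandNef}) exhibits it as a sum of big and nef divisors, hence big and nef. Kawamata--Viehweg then gives $H^1(Y,K_Y+(M)_1+(M)_2-3\Lambda)=0$, which is the analog of Corollary~\ref{CoroVanishing} and hence the vanishing $H^1(Y,\mathcal{L}-F)=0$ with $F=F_1$. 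The big-and-nef property of the restriction to $Y_1$ (the remaining conclusion of Corollary~\ref{corolarioban}) follows from the same secant-line argument: since $\sigma^{*}A-E$ embeds $E_1$ as a line, were the restriction not big the secant map would contract $Y_1$, forcing the image of $X$ to be a plane, a contradiction.

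With these two facts in hand, the cohomological cascade in the proof of Theorem~\ref{h1L=0} runs verbatim for $\delta=1$: the single boundary term to kill is $H^1(F_1,\mathcal{O}_{F_1}(\mathcal{L}))$, which splits into $H^1(\mathcal{O}_{Y_1}(\mathcal{L}-W_1))=0$ (handled by the big-and-nef restriction to $Y_1$ and Kawamata--Viehweg) and $H^1(\mathcal{O}_{W_1}(\mathcal{L}))=0$. The main point to verify, and the one place where care is needed, is that this last term $H^1(\mathcal{O}_{W_1}(\mathcal{L}))=0$ does \emph{not} secretly require the stronger hypotheses of Corollary~\ref{corolarioban}: it is the purely local computation on $W_1=U_{11}+\Lambda|_{Y_1}$ carried out at the end of the proof of Theorem~\ref{h1L=0}, performed entirely on $\mathbb{P}\mathcal{E}\cong\mathbb{F}_3$ and on $U_{11}\cong\mathbb{P}^1\times\mathbb{P}^1$, and it depends only on the geometry of the blow-up near a single exceptional divisor, hence is independent of the global numerical conditions. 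Thus $H^1(Y,\mathcal{O}_Y(\mathcal{L}))=0$, and $\Phi_{X,\mathcal{O}_X(K_X+\widetilde{C})}$ is surjective. The main obstacle is therefore bookkeeping rather than a new idea: confirming that every step of Theorem~\ref{h1L=0} uses only the very-ampleness of $\sigma^{*}A-E$ and $\sigma^{*}B-E$ together with the $\delta$-independent local vanishing.
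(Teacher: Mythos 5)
Your proposal is correct and follows essentially the same route as the paper: the paper's own (very terse) proof is precisely to substitute the Nakai--Moishezon/toric very-ampleness criterion of Lemma~\ref{lema1nodoveryample} for the Reider-based Lemmas~\ref{lemmaA} and~\ref{lemaB}, and then invoke the vanishing machinery of Theorem~\ref{h1L=0}. Your verification that $a\geq 6$, $b\geq an+3$ imply the conditions $(\ref{condicionb2})$--$(\ref{condiciona2})$, and your check that the secant-line bigness argument on $Y_1$ and the local computation on $W_1$ need only the very-ampleness input, supply exactly the bookkeeping the paper leaves implicit.
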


\section{Applications and special cases} 
\label{sec:applications and special cases}
\subsection{The 1-nodal case and special cases}
For $\delta=1$, we have the following result
\begin{theorem}\label{teoremafinal1nodo}
    Suppose that $C$ is a $1-$nodal curve on $\mathbb{F}_{n}$, linearly equivalent to $aC_{0}+bF$ with $a,b\geq 0$. Let $\sigma:X=Bl_{p}(\mathbb{F}_{n})\rightarrow\mathbb{F}_{n}$ be the blow-up of $\mathbb{F}_{n}$ at $p$, where $p\in C$ is the node. Let $\widetilde{C}$ be the normalization of $C$ under $\sigma$. Assume that $p\notin C_{0}$, $a\geq 6$ and $b\geq\max\{(a-2)n+6,an+3\}$, then
    \begin{enumerate}
        \item $\Phi_{X,\mathcal{O}_{X}(K_{X}+\widetilde{C})}$ is surjective.\\
        \item $
   \textnormal{corank} \Phi_{\widetilde{C}} = \left\{ \begin{array}{lcc} 9 & & n\leq 2,  \\ &  & \\  n+6 &  & n\geq 3. \end{array} \right.
$
    \end{enumerate}
\end{theorem}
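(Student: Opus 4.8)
The plan is to assemble the two halves of the statement from results already established in the $1$-nodal setting, after checking that the numerical hypotheses of the present theorem are strong enough to trigger both. The whole design of the conditions $a\geq 6$ and $b\geq\max\{(a-2)n+6,\,an+3\}$ is precisely that they simultaneously imply the hypotheses of Theorem~\ref{teoremasobregaussian1nodo} (surjectivity of the Gaussian map on $X$ in the $1$-nodal case) and those of Theorem~\ref{corangoh0rhofinal} specialized to $\delta=1$ (computation of $\textnormal{corank}\,H^{0}(\rho)$).

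First I would dispatch part (1). Since the hypothesis $b\geq\max\{(a-2)n+6,\,an+3\}$ in particular yields $b\geq an+3$, and we already assume $a\geq 6$, the hypotheses of Theorem~\ref{teoremasobregaussian1nodo} hold verbatim; that theorem then gives the surjectivity of $\Phi_{X,\mathcal{O}_{X}(K_{X}+\widetilde{C})}$. It is worth recalling that for $\delta=1$ this surjectivity was obtained through the Nakai--Moishezon/toric route of Lemma~\ref{lema1nodoveryample} rather than Reider's criterion, which is exactly why the stronger genus-inflating bounds of the general Theorem~\ref{teoremafinaldeltanodos} can be relaxed here.

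Next I would verify that the same hypotheses force those of Theorem~\ref{corangoh0rhofinal} with $\delta=1$, namely $a\geq\max\{5,3\}=5$ and $b\geq\max\{(a-2)n+6,\,(a-1)n+3\}$. The bound $a\geq 6$ covers the first. For the second, observe that $(a-1)n+3=an-n+3\leq an+3\leq b$ and $(a-2)n+6\leq b$ directly by hypothesis, so indeed $b\geq\max\{(a-2)n+6,\,(a-1)n+3\}$ (this covers $n=0$ as well, where both thresholds reduce to constants $\leq 6\leq b$). Applying Theorem~\ref{corangoh0rhofinal} therefore gives
\[
\textnormal{corank}\,H^{0}(\rho)=h^{0}(\mathbb{F}_{n},\mathcal{O}_{\mathbb{F}_{n}}(-K_{\mathbb{F}_{n}})),
\]
which equals $9$ for $n\leq 2$ and $n+6$ for $n\geq 3$.

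Finally I would combine the two. Having established in part (1) that $\Phi_{X,\mathcal{O}_{X}(K_{X}+\widetilde{C})}$ is surjective, Remark~\ref{remarkcorank2} yields $\textnormal{corank}(\Phi_{\widetilde{C}})=\textnormal{corank}(H^{0}(\rho))$, and substituting the value just computed delivers part (2). I do not expect any genuine obstacle beyond the bookkeeping of inequalities: the substantive cohomological work lives in Section~\ref{sec:3} and the big-and-nef/vanishing argument in Section~\ref{sec:surjectivity}. The one point meriting a sentence of care—carried out above—is confirming that the single $\max$ appearing in the hypothesis dominates both separate thresholds, the one coming from the surjectivity statement and the one coming from the corank computation.
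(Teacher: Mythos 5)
Your proposal is correct and follows exactly the paper's own argument: part (1) is deduced from Theorem~\ref{teoremasobregaussian1nodo} and part (2) from Remark~\ref{remarkcorank2} together with Theorem~\ref{corangoh0rhofinal} specialized to $\delta=1$. Your explicit verification that $b\geq an+3$ implies $b\geq (a-1)n+3$ is a welcome bit of bookkeeping that the paper leaves implicit.
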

\begin{proof}
    From Theorem \ref{teoremasobregaussian1nodo}, we obtain (1). For (2), combine Remark \ref{remarkcorank2} and Theorem \ref{corangoh0rhofinal}.  
\end{proof}

The commutative diagram in (\ref{diagramaconmuativo1}) of Section \ref{subsec:corankbetaalpha} implies that if $H^{1}(\Omega^{1}_{X}(\log\widetilde{C})(2K_{X}+\widetilde{C}))=0$, then $H^{0}(\rho)$ is surjective. We can find conditions in the coefficients $a$ and $b$ of $aC_{0}+bF$ to get the surjectivity of $H^{0}(\rho)$.
\begin{theorem}
    Suppose $a\geq 5$, and $b\geq (a-2)n+6$, or that $a\geq 5$ and $b\geq 5$ if $n=0$. If $\delta> a+b+\frac{-an}{2}$ such that $g-\delta\geq 2$, then $H^{0}(\rho)$ is surjective.
\end{theorem}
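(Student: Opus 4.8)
The plan is to reduce the surjectivity of $H^{0}(\rho)$ to a vanishing statement on the nodal curve $C$ itself, and then to analyse that statement through the normalization $\varphi\colon\widetilde{C}\to C$. By the commutative diagram (\ref{diagramaconmuativo1}) and Remark~\ref{remarkh0rho1} we have $\operatorname{corank}(H^{0}(\rho))=h^{1}(X,\Omega^{1}_{X}(\log\widetilde{C})(2K_{X}+\widetilde{C}))$, so $H^{0}(\rho)$ is surjective exactly when this group vanishes. Since $a\ge 5$ and $b\ge (a-2)n+6$ (respectively $n=0$, $a,b\ge 5$), Proposition~\ref{propisomlogandh1} applies and identifies this group with $H^{1}(C,L)$, where $L:=\mathcal{O}_{\mathbb{F}_{n}}(2K_{\mathbb{F}_{n}}+C)\mid_{C}$. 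Hence it suffices to prove that $H^{1}(C,L)=0$.

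Write $Z=\{p_{1},\dots,p_{\delta}\}$ for the nodes. The normalization sequence $0\to\mathcal{O}_{C}\to\varphi_{*}\mathcal{O}_{\widetilde{C}}\to\mathcal{O}_{Z}\to 0$, tensored with $L$ and simplified by the projection formula $\varphi_{*}\mathcal{O}_{\widetilde{C}}\otimes L\cong\varphi_{*}\varphi^{*}L$, yields the long exact sequence
\[
H^{0}(\widetilde{C},\varphi^{*}L)\xrightarrow{\ \mathrm{ev}\ }\mathbb{C}^{\delta}\longrightarrow H^{1}(C,L)\longrightarrow H^{1}(\widetilde{C},\varphi^{*}L)\longrightarrow 0 .
\]
The idea is to kill the two outer contributions separately: first the term $H^{1}(\widetilde{C},\varphi^{*}L)$ on the smooth curve $\widetilde{C}$, and then the skyscraper term $\mathbb{C}^{\delta}$.

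For the first term I would use a degree bound. Since $\varphi$ is birational, $\deg_{\widetilde{C}}\varphi^{*}L=(2K_{\mathbb{F}_{n}}+C)\cdot C$, and using $(K_{\mathbb{F}_{n}}+C)\cdot C=2g-2$ together with $K_{\mathbb{F}_{n}}\cdot C=an-2a-2b$ one obtains $\deg_{\widetilde{C}}\varphi^{*}L=(2g-2)+(an-2a-2b)$. As $\widetilde{g}=g-\delta$, the hypothesis $\delta> a+b-\tfrac{an}{2}$ is precisely the inequality $\deg_{\widetilde{C}}\varphi^{*}L>2\widetilde{g}-2$; by Serre duality on the smooth curve $\widetilde{C}$ this gives $H^{1}(\widetilde{C},\varphi^{*}L)=0$ (the extra condition $\widetilde{g}=g-\delta\ge 2$ guarantees in addition that $\widetilde{C}$ carries a genuine Wahl map). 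Thus the sequence collapses to $H^{1}(C,L)\cong\operatorname{coker}(\mathrm{ev})$.

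The hard part will be the second term: one must show that the evaluation (node-gluing) map $\mathrm{ev}\colon H^{0}(\widetilde{C},\varphi^{*}L)\to\mathbb{C}^{\delta}$ is surjective, i.e.\ that the $\delta$ functionals recording the difference of a section at the two branches of each node are independent on global sections. I expect this to be the genuine obstacle, and it is delicate precisely because it is controlled by the surface geometry and not by the degree bound: combining adjunction $\omega_{C}=\mathcal{O}_{\mathbb{F}_{n}}(K_{\mathbb{F}_{n}}+C)\mid_{C}$ with Serre duality on the Gorenstein curve $C$ gives $\operatorname{coker}(\mathrm{ev})\cong H^{1}(C,L)\cong H^{0}(C,\mathcal{O}_{\mathbb{F}_{n}}(-K_{\mathbb{F}_{n}})\mid_{C})^{\vee}$, and this last space receives an injection from $H^{0}(\mathbb{F}_{n},\mathcal{O}_{\mathbb{F}_{n}}(-K_{\mathbb{F}_{n}}))$ (the twisted-down sheaf $\mathcal{O}(-K_{\mathbb{F}_{n}}-C)$ has no sections once $a\ge 3$), exactly in the spirit of Lemma~\ref{propisomh1XaC}. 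Consequently the surjectivity of $\mathrm{ev}$ cannot be taken for granted, and it is here that the real work must go: I would either produce enough sections of $\varphi^{*}L$ with independently prescribed branch-differences at the $\delta$ nodes, or sharpen the numerical hypotheses, since the degree inequality alone controls only the $\widetilde{C}$-part $H^{1}(\widetilde{C},\varphi^{*}L)$ and says nothing about the $\delta$-dimensional skyscraper contribution.
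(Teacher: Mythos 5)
Your reduction and your diagnosis of where the difficulty sits are both correct, and in fact your analysis is more careful than the paper's own one‑line proof. The paper argues by citing Proposition~\ref{propisomlogandh1} to write $H^{1}(X,\Omega^{1}_{X}(\log\widetilde{C})(2K_{X}+\widetilde{C}))\cong H^{1}(\widetilde{C},\mathcal{O}_{\widetilde{C}}(2K_{X}+\widetilde{C}))$ and then kills the latter exactly as you do, by the degree inequality $-K_{X}\cdot\widetilde{C}=2a+2b-an-2\delta<0$. But Proposition~\ref{propisomlogandh1} identifies that cohomology group with $H^{1}(C,L)$ on the \emph{nodal} curve $C$, where $L=\mathcal{O}_{\mathbb{F}_{n}}(2K_{\mathbb{F}_{n}}+C)\mid_{C}$, not with $H^{1}(\widetilde{C},\varphi^{*}L)$ on the normalization; the two differ by precisely the term you isolate, namely $\operatorname{coker}(\mathrm{ev})$. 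So the step you flag as ``the genuine obstacle'' is exactly the step the paper's proof silently skips.

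Moreover, your own Serre‑duality remark shows this obstacle cannot be overcome: once $H^{1}(\widetilde{C},\varphi^{*}L)=0$, the normalization sequence gives $\operatorname{coker}(\mathrm{ev})\cong H^{1}(C,L)\cong H^{0}(C,\mathcal{O}_{C}(-K_{\mathbb{F}_{n}}))^{\vee}$, and since $-K_{\mathbb{F}_{n}}-C=(2-a)C_{0}+(2+n-b)F$ has no sections for $a\geq 3$, this space has dimension at least $h^{0}(\mathbb{F}_{n},\mathcal{O}_{\mathbb{F}_{n}}(-K_{\mathbb{F}_{n}}))\geq 9$. Hence $\mathrm{ev}$ is never surjective under these hypotheses, $h^{1}(X,\Omega^{1}_{X}(\log\widetilde{C})(2K_{X}+\widetilde{C}))=h^{1}(C,L)>0$, and no choice of sections (nor any sharpening of the numerical hypotheses compatible with Proposition~\ref{propisomlogandh1}) can make this route succeed — this is consistent with Lemma~\ref{propisomh1XaC} and Theorem~\ref{corangoh0rhofinal}, which compute this same quantity to be $h^{0}(\mathbb{F}_{n},\mathcal{O}_{\mathbb{F}_{n}}(-K_{\mathbb{F}_{n}}))$ independently of $\delta$. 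The only conceivable rescue of the statement is entirely different: in the large‑$\delta$ regime the vanishing $H^{1}(X,\Omega^{1}_{X}(2K_{X}+2\widetilde{C}))=0$ behind Remark~\ref{remarkh0rho1} is no longer available, so one only has $\operatorname{corank}(H^{0}(\rho))=\dim\ker\bigl(H^{1}(\Omega^{1}_{X}(\log\widetilde{C})(2K_{X}+\widetilde{C}))\to H^{1}(\Omega^{1}_{X}(2K_{X}+2\widetilde{C}))\bigr)$, and one would have to prove that this map is injective despite its source being nonzero. In short: your proposal is incomplete as a proof, but the gap you located is real, it is unfillable by the strategy both you and the paper pursue, and it exposes an error in the paper's own argument.
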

\begin{proof}
    By proposition \ref{propisomlogandh1}, we have that $$H^{1}(X,\Omega^{1}_{X}(\log\widetilde{C})(2K_{X}+\widetilde{C}))\cong H^{1}(\widetilde{C},\mathcal{O}_{\widetilde{C}}(2K_{X}+\widetilde{C})) \cong H^{0}(\widetilde{C},\mathcal{O}_{\widetilde{C}}(-K_{X})).$$ Then $H^{0}(\rho)$ is surjective if $-K_{X}.C=2b+2a-an-2\delta<0$.
\end{proof}
\subsection{Geometric applications}
    \begin{corollary} \label{corolario1nodo} Under the conditions of Theorem \ref{teoremafinaldeltanodos}, if there exists a $\delta-$nodal curve $C\in |aC_{0}+bF|$ in $\mathbb{F}_{n}$, then $C$ cannot be embedded in $\mathbb{F}_{m}$ as $\delta-$nodal curve, for $m\neq n$ and $m\geq 4$.
    \end{corollary}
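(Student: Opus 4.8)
The plan is to exploit the fact that the Wahl map $\Phi_{\widetilde{C}}$, and therefore its corank, is an \emph{intrinsic} invariant of the abstract smooth curve $\widetilde{C}$: it depends only on $\widetilde{C}$ and its canonical bundle, not on any particular presentation of $\widetilde{C}$ as the normalization of a nodal curve on a surface. Hence the single number $\operatorname{corank}(\Phi_{\widetilde{C}})$ must be reproduced by every such presentation. I would argue by contradiction: suppose that, in addition to being the normalization of the given $\delta$-nodal curve $C\in|aC_{0}+bF|$ on $\mathbb{F}_{n}$, the curve $\widetilde{C}$ also arises as the normalization of a $\delta$-nodal curve $C'\in|a'C_{0}+b'F|$ on $\mathbb{F}_{m}$ with $m\neq n$ and $m\geq 4$, the hypotheses of Theorem \ref{teoremafinaldeltanodos} being satisfied in this second realization as well.

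Applying Theorem \ref{teoremafinaldeltanodos} to the realization on $\mathbb{F}_{n}$ gives $\operatorname{corank}(\Phi_{\widetilde{C}})=h^{0}(\mathbb{F}_{n},\mathcal{O}_{\mathbb{F}_{n}}(-K_{\mathbb{F}_{n}}))$, while applying it to the realization on $\mathbb{F}_{m}$ gives $\operatorname{corank}(\Phi_{\widetilde{C}})=h^{0}(\mathbb{F}_{m},\mathcal{O}_{\mathbb{F}_{m}}(-K_{\mathbb{F}_{m}}))$. By intrinsicality these two values must coincide:
\[
h^{0}(\mathbb{F}_{n},\mathcal{O}_{\mathbb{F}_{n}}(-K_{\mathbb{F}_{n}}))=h^{0}(\mathbb{F}_{m},\mathcal{O}_{\mathbb{F}_{m}}(-K_{\mathbb{F}_{m}})).
\]
I would then insert the explicit anticanonical section counts recorded in Lemma \ref{propisomh1XaC}, namely $h^{0}(\mathbb{F}_{k},\mathcal{O}_{\mathbb{F}_{k}}(-K_{\mathbb{F}_{k}}))=9$ for $k\leq 2$ and $=k+6$ for $k\geq 3$. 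Since $m\geq 4$, the right-hand side equals $m+6\geq 10$; hence the common value is at least $10$, which forces $n\geq 4$ as well (for $n\leq 3$ the left-hand side would be $9$). But then the left-hand side equals $n+6$, and $n+6=m+6$ yields $n=m$, contradicting $m\neq n$. This contradiction shows no such second realization can exist.

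The conceptual core — invariance of the Wahl corank — is immediate, and the numerical bookkeeping is routine; the point that genuinely requires care, and the reason the hypothesis $m\geq 4$ is essential, is that the count $h^{0}(\mathbb{F}_{k},\mathcal{O}_{\mathbb{F}_{k}}(-K_{\mathbb{F}_{k}}))$ is \emph{not} injective in $k$: it is constantly $9$ for $0\leq k\leq 3$ and only becomes strictly increasing for $k\geq 3$. Consequently the Wahl corank can separate $\mathbb{F}_{n}$ from $\mathbb{F}_{m}$ only once the target index exceeds $3$, which is exactly the range $m\geq 4$ isolated in the statement. The one real subtlety to address is justifying that Theorem \ref{teoremafinaldeltanodos} is legitimately applicable to the hypothetical realization on $\mathbb{F}_{m}$ — this is precisely what is meant by requiring the conditions of that theorem to hold for the second realization, and it is the hypothesis under which the corank computation on $\mathbb{F}_{m}$ is valid.
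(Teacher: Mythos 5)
Your proposal is correct and follows essentially the same route as the paper: the paper's own proof is a two-sentence sketch invoking the intrinsic nature of $\operatorname{corank}(\Phi_{\widetilde{C}})$ and the $n$-dependence of $h^{0}(\mathbb{F}_{n},\mathcal{O}_{\mathbb{F}_{n}}(-K_{\mathbb{F}_{n}}))$ from Theorem \ref{teoremafinaldeltanodos}, which is exactly your argument. Your write-up is in fact more complete than the paper's, since you make explicit both why the hypothesis $m\geq 4$ is needed (the count equals $9$ for all $k\leq 3$, so the corank cannot separate the surfaces in that range) and that the conditions of Theorem \ref{teoremafinaldeltanodos} must be assumed for the hypothetical realization on $\mathbb{F}_{m}$ as well.
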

    \begin{proof}
   The corank of $\Phi_{\widetilde{C}}$ depends on $n$ in a specific way (Theorem \ref{teoremafinaldeltanodos}). If $C$ could be embedded in $\mathbb{F}_{m}$ with $m\neq n$, the corank would have to match the value for $\mathbb{F}_{m}$, creating a contradiction.
    \end{proof}
    
\subsection{Connection to Wahl's Conjecture}
 Our results provide evidence for a general phenomenon:
 \begin{conjecture}\label{conjeturawahl}
     (Wahl \cite{MR1064866}). Let $S$ be a regular surface ($H^{1}(S,\mathcal{O}_{S})=0$). Then there is $g_{0}$ so that for every smooth curve $C$ on $S$ of genus $\geq g_{0}$, one has
     \[
     \textnormal{corank}(\Phi_{C})\geq h^{0}(S,\mathcal{O}_{S}(-K_{S})).
     \]
 \end{conjecture}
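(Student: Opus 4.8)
The plan is to prove the inequality through the surface-to-curve comparison of the Standard Diagram (\ref{DiagramaD}), specialized to $M = \mathcal{O}_S(K_S+C)$ so that $M|_C = \Omega^1_C$ by adjunction. The crucial observation is that, unlike the \emph{equality} computation carried out for $\widetilde C \subset X$ in this paper, the direction $\geq$ does not require surjectivity of the surface Gaussian map. Indeed, since $S$ is regular, Serre duality gives $H^1(S,\mathcal{O}_S(K_S)) \cong H^1(S,\mathcal{O}_S)^\vee = 0$, so the restriction map $\mathrm{Res}$ is surjective exactly as in Proposition \ref{propositionres}; hence $\textnormal{corank}(\Phi_C) = \textnormal{corank}(\Phi_C \circ \mathrm{Res})$. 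By Remark \ref{remarkcorank1} the latter equals $\textnormal{corank}(H^0(\rho) \circ \Phi_{S,\mathcal{O}_S(K_S+C)})$, and since the image of this composite is contained in the image of $H^0(\rho)$, one obtains for free the bound $\textnormal{corank}(\Phi_C) \geq \textnormal{corank}(H^0(\rho))$. This is precisely the slack that the conjecture's $\geq$ allows.

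It therefore suffices to show $\textnormal{corank}(H^0(\rho)) \geq h^0(S,\mathcal{O}_S(-K_S))$ for $C$ of large genus. First I would use the logarithmic sequence (\ref{sucesionlogarithmic2}) to identify $\textnormal{coker}(H^0(\rho))$ with the kernel of $H^1(S,\Omega^1_S(\log C)(2K_S+C)) \to H^1(S,\Omega^1_S(2K_S+2C))$; when the Bott-type vanishing $H^1(S,\Omega^1_S(2K_S+2C)) = 0$ holds, this yields $\textnormal{corank}(H^0(\rho)) = h^1(S,\Omega^1_S(\log C)(2K_S+C))$. Next I would feed this into the second logarithmic sequence (\ref{sucesionlogarithmic3}): if $H^2(S,\Omega^1_S(2K_S+C)) = 0$, the connecting map forces a surjection $H^1(S,\Omega^1_S(\log C)(2K_S+C)) \twoheadrightarrow H^1(C,\mathcal{O}_S(2K_S+C)|_C)$. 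Finally, exactly as in Lemma \ref{propisomh1XaC}, the restriction sequence $0 \to \mathcal{O}_S(2K_S) \to \mathcal{O}_S(2K_S+C) \to \mathcal{O}_S(2K_S+C)|_C \to 0$ together with Kawamata--Viehweg vanishing for the ample class $K_S+C$ gives $H^1(C,\mathcal{O}_S(2K_S+C)|_C) \cong H^2(S,\mathcal{O}_S(2K_S)) \cong H^0(S,\mathcal{O}_S(-K_S))^\vee$ by Serre duality. Chaining these produces $\textnormal{corank}(\Phi_C) \geq h^0(S,\mathcal{O}_S(-K_S))$.

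The whole argument thus reduces to securing three positivity inputs for curves of genus $\geq g_0$: the ampleness of $K_S+C$, and the two vanishings $H^1(S,\Omega^1_S(2K_S+2C)) = 0$ and $H^2(S,\Omega^1_S(2K_S+C)) = 0$. The last is $H^0(S,T_S(-K_S-C))^\vee$ by Serre duality, and vanishes once $-K_S-C$ is sufficiently negative against an ample class; for a fixed $S$ all three hold once $C$ is a large enough ample multiple, and I would package them through Le Potier--Kodaira and Kawamata--Viehweg vanishing.

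The hard part will be that these positivity conditions are governed by the location of $[C]$ in $\mathrm{NS}(S)$, whereas the hypothesis controls only the arithmetic genus $p_a(C) = 1 + \tfrac12 C.(C+K_S)$. On a surface of higher Picard rank a smooth curve of large genus need not be a large ample multiple: its class can drift toward the boundary of the nef cone, where $K_S+C$ fails to be ample and the Bott vanishings break down. Closing this gap demands a boundedness statement---that only finitely many numerical classes of large-genus smooth curves of bounded positivity occur, or that such exceptional classes can be handled by an \emph{ad hoc} Serre-duality estimate surviving the loss of vanishing. This is exactly why the present paper establishes the conjecture only for the explicit family $\widetilde C \subset X = \textnormal{Bl}_Z\mathbb{F}_n$, where $\mathrm{NS}(X)$ and the numerical conditions on $(a,b)$ render all three inputs transparent and uniform.
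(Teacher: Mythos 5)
First, a point of framing: the statement you were asked to prove is a \emph{conjecture}, and the paper does not prove it in general either --- it only verifies it for the curves covered by Theorem \ref{teoremafinaldeltanodos}. The paper's verification runs: Theorem \ref{teoremafinaldeltanodos} gives the exact value $\textnormal{corank}(\Phi_{\widetilde{C}})=h^{0}(\mathbb{F}_{n},\mathcal{O}_{\mathbb{F}_{n}}(-K_{\mathbb{F}_{n}}))$, and the conjectured inequality for the smooth curve $\widetilde{C}$ on the regular surface $X$ then follows from $h^{0}(\mathbb{F}_{n},\mathcal{O}_{\mathbb{F}_{n}}(-K_{\mathbb{F}_{n}}))\geq h^{0}(\mathbb{F}_{n},\mathcal{I}_{Z}\otimes\mathcal{O}_{\mathbb{F}_{n}}(-K_{\mathbb{F}_{n}}))=h^{0}(X,\mathcal{O}_{X}(-K_{X}))$, proved with the ideal-sheaf sequence twisted by $-K_{\mathbb{F}_{n}}$. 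Your conditional core is correct and parallels the paper's Sections \ref{sec:pre}--\ref{sec:3}: with $S$ regular, $\mathrm{Res}$ is surjective exactly as in Proposition \ref{propositionres}, so $\textnormal{corank}(\Phi_{C})\geq\textnormal{corank}(H^{0}(\rho))$; under $H^{1}(S,\Omega^{1}_{S}(2K_{S}+2C))=0$ the sequence (\ref{sucesionlogarithmic2}) identifies $\textnormal{corank}(H^{0}(\rho))$ with $h^{1}(S,\Omega^{1}_{S}(\log C)(2K_{S}+C))$; under $H^{2}(S,\Omega^{1}_{S}(2K_{S}+C))=0$ the sequence (\ref{sucesionlogarithmic3}) surjects this group onto $H^{1}(C,\mathcal{O}_{S}(2K_{S}+C)|_{C})$; and Kawamata--Viehweg plus Serre duality compute the latter as $h^{0}(S,\mathcal{O}_{S}(-K_{S}))$, exactly as in Lemma \ref{propisomh1XaC}. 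Your observation that the inequality $\geq$ needs only surjectivity of $\mathrm{Res}$, not of $\Phi_{S,\mathcal{O}_{S}(K_{S}+C)}$, is a genuine economy: applied to the paper's own setting, it shows the conjecture's inequality for $\widetilde{C}\subset X$ already follows from Theorem \ref{corangoh0rhofinal} (Section \ref{sec:3} alone, under its weaker numerical hypotheses), with none of the Section \ref{sec:surjectivity} machinery, which is needed only to upgrade $\geq$ to an equality.

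However, as a proof of the conjecture the proposal has a genuine gap --- the one you name yourself at the end, and it is not a removable technicality. The conjecture quantifies over \emph{all} smooth curves of genus $\geq g_{0}$ on $S$, whereas your three positivity inputs constrain the class of $C$ in $\mathrm{NS}(S)$, which the genus does not control. Concretely, already on $S=\mathbb{F}_{n}$: smooth irreducible curves $C\in|2C_{0}+bF|$ exist for $b>2n$ and have genus $b-n-1\to\infty$, yet $2K_{\mathbb{F}_{n}}+2C=(2b-4-2n)F$, and the relative cotangent sequence of $\phi$ together with $R^{1}\phi_{*}\mathcal{O}_{\mathbb{F}_{n}}(-2C_{0})\cong\mathcal{O}_{\mathbb{P}^{1}}(n)$ gives $h^{1}(\mathbb{F}_{n},\Omega^{1}_{\mathbb{F}_{n}}(2K_{\mathbb{F}_{n}}+2C))\geq 2b-3-2n>0$. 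So your first vanishing fails for every such curve no matter how large $b$ is, and with it the identification $\textnormal{corank}(H^{0}(\rho))=h^{1}(S,\Omega^{1}_{S}(\log C)(2K_{S}+C))$ on which the whole chain rests. No boundedness statement of the kind you ask for is available, and this is precisely why the statement remains a conjecture: what can actually be carried out --- and all that the paper carries out --- is the special-case verification, not the general statement.
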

Theorem \ref{teoremafinaldeltanodos} verifies this conjecture in our setting. To see this,     recall that $-K_{X}=\sigma^{*}(-K_{\mathbb{F}_{n}})-E$ and $H^{0}(X,\mathcal{O}_{X}(-K_{X}))\cong H^{0}(X,\sigma^{*}\mathcal{O}_{\mathbb{F}_{n}}(-K_{\mathbb{F}_{n}})\otimes\mathcal{O}_{X}(-E))\cong H^{0}(\mathbb{F}_{n}, \mathcal{O}_{\mathbb{F}_{n}}(-K_{\mathbb{F}_{n}})\otimes \mathcal{I}_{Z})$ since, for all $i>0$, $R^{i}\sigma_{*}\mathcal{O}_{X}(-E)=0$ and $\sigma_{*}\mathcal{O}_{X}(-E)=\mathcal{I}_{Z}$ where $\mathcal{I}_{Z}$ is the ideal sheaf of $Z=\{p_{1},\dots,p_{\delta}\}$ on $\mathbb{F}_{n}$. \\
 Consider the exact classical sequence
   \begin{equation*}
       0\rightarrow \mathcal{I}_{Z}\rightarrow\mathcal{O}_{\mathbb{F}_{n}}\rightarrow\bigoplus_{i=1}^{\delta}\mathcal{O}_{p_{i}}\rightarrow 0.
   \end{equation*}
Tensoring this sequence with $\mathcal{O}_{\mathbb{F}_{n}}(-K_{\mathbb{F}_{n}})$ yields:
      \begin{equation*}
       0\rightarrow \mathcal{I}_{z}\otimes\mathcal{O}_{\mathbb{F}_{n}}(-K_{\mathbb{F}_{n}})\rightarrow\mathcal{O}_{\mathbb{F}_{n}}(-K_{\mathbb{F}_{n}})\rightarrow \mathcal{O}_{\mathbb{F}_{n}}(-K_{\mathbb{F}_{n}})\otimes \left (\bigoplus_{i=1}^{\delta}\mathcal{O}_{p_{i}}\right )\rightarrow0
   \end{equation*}
   with long exact sequence in cohomology
      \begin{equation*}
      \begin{split}
           0 & \rightarrow H^{0}(\mathbb{F}_{n},\mathcal{I}_{p}\otimes\mathcal{O}_{\mathbb{F}_{n}}(-K_{\mathbb{F}_{n}}))\rightarrow H^{0}(\mathbb{F}_{n},\mathcal{O}_{\mathbb{F}_{n}}(-K_{\mathbb{F}_{n}})) \xrightarrow{} \mathbb{C}^{\delta}\rightarrow \\
           & \rightarrow H^{1}(\mathbb{F}_{n},\mathcal{I}_{p}\otimes\mathcal{O}_{\mathbb{F}_{n}}(-K_{\mathbb{F}_{n}}))\rightarrow H^{1}(\mathbb{F}_{n},\mathcal{O}_{\mathbb{F}_{n}}(-K_{\mathbb{F}_{n}}))\rightarrow 0 \rightarrow \\
           & \rightarrow H^{2}(\mathbb{F}_{n},\mathcal{I}_{p}\otimes\mathcal{O}_{\mathbb{F}_{n}}(-K_{\mathbb{F}_{n}}))\rightarrow H^{2}(\mathbb{F}_{n},\mathcal{O}_{\mathbb{F}_{n}}(-K_{\mathbb{F}_{n}}))\rightarrow 0. \\
      \end{split}
   \end{equation*}
   Therefore $\textnormal{corank}(\Phi_{\widetilde{C}})=h^{0}(\mathbb{F}_{n},\mathcal{O}_{\mathbb{F}_{n}}(-K_{\mathbb{F}_{n}}))\geq h^{0}(\mathbb{F}_{n},\mathcal{I}_{p}\otimes\mathcal{O}_{\mathbb{F}_{n}}(-K_{\mathbb{F}_{n}}))= h^{0}(X,\mathcal{O}_{X}(-K_{X}))$.

\bibliography{refs}

\noindent
\textsc{Centro de Ciencias Matemáticas, UNAM, Campus Morelia, Morelia, Michoacán, México.}

\noindent
\textit{Email address}: mguerrero@matmor.unam.mx
\end{document}